\newcommand{\ubar}[1]{\stackunder[1.2pt]{$#1$}{\rule{.9ex}{.075ex}}}
\titleformat{\section}{\centering\normalfont\scshape}{\thesection.}{.5em}{#1}
\titleformat{\subsection}[runin]{\normalfont\itshape}{\textnormal{\thesubsection.}}{.5em}{#1.}
\titleformat{\subsubsection}[runin]{\normalfont\itshape}{\thesubsubsection.}{.5em}{#1.}
\titlespacing{\section}{0em}{1em}{0.5em}
\titlespacing{\subsection}{0em}{.5em}{0.5em}
\newcommand{\ox}{{\bar x}}
\newcommand{\oy}{{\bar y}}
\newcommand{\ux}{{\ubar x}}
\newcommand{\uy}{{\ubar y}}
\definecolor{gray}{gray}{0.5}
\newcommand{\cmt}[1]{}
\newcommand{\vertiii}[1]{{\left\vert\kern-0.25ex\left\vert\kern-0.25ex\left\vert #1 
    \right\vert\kern-0.25ex\right\vert\kern-0.25ex\right\vert}}
    \newcommand{\detail}[1]{}
\newcommand{\RH} {\rho_{\tiny{\mathrm{RH}}}}
	\renewcommand{\d}{(2*\n+\m)}
	\newcommand{\Qfourx}[1]{\d*(\d-1)/(\d*\d+\d*\m+(2*#1-1)*(\m+1))}
	\newcommand{\Qfoury}[1]{(\m+1)*(\d-1)/(\d*\d+\d*\m+(2*#1-1)*(\m+1))}
	\newcommand\Qthreex[1]{(\d-#1)/( \d-#1+\m+1)}
	\newcommand\Qthreey[1]{(\m+1)/(\d-#1+\m+1)}
	\newcommand{\definecoords}{
		\def\ptsize{.1pt}
		\def\QbgFillOpacity{.2}
		\def\QbgFillColor{black}
		\def\QbgLineOpacity{.6}
		\def\QbgDrawCritSeg{1}
		\def\QbgCritSegStyle{solid}
		\def\QbgCritSegOpacity{\QbgLineOpacity}
		\def\QbbFillOpacity{.1}
		\def\QbbLineOpacity{.5}
		\def\AFillOpacity{.1}
		\def\AFillColor{red}
		\def\ALineOpacity{.8}
		\coordinate (Q1) at (0,0);
		\coordinate (Q2) at ( {(2*\n-1)/(2*\n-1+\b)}, {(2*\n-1)/(2*\n-1+\b)}  );
		\coordinate (Q3) at ( {\Qthreex{\b}},  { \Qthreey{\b} }  );
		\coordinate (Q30) at ( {\Qthreex{0}}, { \Qthreey{0}  } );
		\coordinate (Q4b) at ( { \Qfourx{\b}  }, { \Qfoury{\b} }  );
		\coordinate (Q4g) at ( { \Qfourx{\g}  }, { \Qfoury{\g} }  );
		
		% Bezier control points
		\coordinate (C1) at ( { (\Qfourx{\b}+\Qfourx{\g})/2 }, {(\Qfoury{\b}+\Qfoury{\g})/2}  ); % (Q4g+Q4b)/2
		\coordinate (C2) at (Q4b);
	}
	\newcommand{\drawauxlines}{ 
		% axes
		\draw (0,0) [->] -- (0,1) node [left] {$\frac1q$};
		\draw (0,0) [->] -- (1,0) node [below] {$\frac1p$};
		% critical lines	
		\draw [dashed,opacity=.3] (1,0) -- (0,1);
		\draw [dashed,opacity=.3] (0,0) -- (1,{(1+\m)/\d}); % (0,0) --  (1,{1/\d})
		\draw [dashed,opacity=.3] (0,0) -- (1,1); % (0,0) -- (1,1);
		% p = 2 vertical segment
		\draw [dashed,opacity=.1]
		(.5, 0) -- (.5, .5);
	}
	\newcommand{\drawQbg}{
		\fill (Q1) node [left] {$Q_1$} circle [radius=.02em];
		\fill (Q2) node [above left] {$Q_{2}$} circle [radius=\ptsize];
		\fill (Q3) node [right] {$Q_{3}$} circle [radius=\ptsize];
		\fill (Q4g) node [below right] {$Q_{4}$} circle [radius=\ptsize];
		\fill [color=\QbgFillColor,opacity=\QbgFillOpacity] (Q1) -- (Q2) -- (Q3) -- (Q4g) -- cycle;
		\draw [opacity=\QbgLineOpacity] (Q1) -- (Q2) -- (Q3);
		\draw [opacity=\QbgLineOpacity] (Q4g) -- (Q1);
		\if\QbgDrawCritSeg1
			\draw [style=\QbgCritSegStyle,opacity=\QbgCritSegOpacity] (Q3) -- (Q4g);
		\fi
	}
	\newcommand{\ttf} {\tfrac{x'-y'}{t}}
\def\lc{\lesssim}
\def\gc{\gtrsim}
\def\eps{\varepsilon}
\def\bbone{{\mathbbm 1}}
\newcommand{\ci}[1]{_{{}_{\!\scriptstyle{#1}}}}
\newcommand{\Be}{\begin{equation}}
\newcommand{\Ee}{\end{equation}}
\newcommand{\Bm}{\begin{multline}}
\newcommand{\Em}{\end{multline}}
\def\intslash{\rlap{\kern  .32em $\mspace {.5mu}\backslash$ }\int}
\def\qsl{{\rlap{\kern  .32em $\mspace {.5mu}\backslash$ }\int_{Q_x}}}
\def\Re{\operatorname{Re\,}}
\def\Im{\operatorname{Im\,}}
\def\lc{\lesssim}
\def\gc{\gtrsim}
\def\emph#1{{\it #1 }}
\def\ga{\gamma}
\def\cf{{\it cf}}
\def\rank{{\mathrm{rank}}}
\def\supp{{\mathrm{supp}}}
\def\inn#1#2{\langle#1,#2\rangle}
\def\ga{\gamma}             
\def\eps{\varepsilon}
\def\ka{\kappa}
             \def\La{\Lambda}
\def\om{\omega}              
\def\vth{\vartheta}
\def\fI{{\mathfrak {I}}}
\def\fJ{{\mathfrak {J}}}
\def\fM{{\mathfrak {M}}}
\def\fS{{\mathfrak {S}}}
\def\fg{{\mathfrak {g}}}
\def\fk{{\mathfrak {k}}}
\def\fn{{\mathfrak {n}}}
\def\fs{{\mathfrak {s}}}
\def\fv{{\mathfrak {v}}}
\def\fw{{\mathfrak {w}}}
\def\fx{{\mathfrak {x}}}
\def\fy{{\mathfrak {y}}}
\def\fz{{\mathfrak {z}}}
\def\bbC{{\mathbb {C}}}
\def\bbH{{\mathbb {H}}}
\def\bbR{{\mathbb {R}}}
\def\bbV{{\mathbb {V}}}
\def\bbZ{{\mathbb {Z}}}
\def\cA{{\mathcal {A}}}
\def\cC{{\mathcal {C}}}
\def\cJ{{\mathcal {J}}}
\def\cK{{\mathcal {K}}}
\def\cM{{\mathcal {M}}}
\def\cR{{\mathcal {R}}}
\def\cS{{\mathcal {S}}}
\def\emph#1{{\it #1}}
\def\textbf#1{{\bf #1}}
\def\beq{\begin{equation}}
\def\endeq{\end{equation}}
\def\bs{\begin{split}}
\def\es{\end{split}}
\def\ybar{\overline y}
\theoremstyle{plain}
\newtheorem{thm}{Theorem}[section]
\newtheorem{prop}[thm]{Proposition}
\newtheorem{lem}[thm]{Lemma}
\newtheorem{cor}[thm]{Corollary}
\newtheorem*{thm*}{Theorem}
\newtheorem*{conj*}{Conjecture}
\newtheorem*{openproblem*}{Open Problem}
\theoremstyle{remark}
\newtheorem{rem}[thm]{Remark}
\newtheorem*{remarka}{Remark}
\newtheorem*{remarksa}{Remarks}
\numberwithin{equation}{section}
\definecolor{rscol}{rgb}{0,0,1.} % Blue
\definecolor{jrcol}{rgb}{0.1,.6,0.1} % Green
\definecolor{ascol}{rgb}{1.,0,0} % Red
\def\R{\mathbb{R}}
\def\Z{\mathbb{Z}}
\def\H{\mathbb{H}}
\begin{document}
\title
[Spherical  maximal functions on Heisenberg groups]{ Lebesgue space estimates 
for spherical maximal functions on Heisenberg  groups}
\author[ J. Roos \  \ A. Seeger \ \  R. Srivastava]{Joris Roos \  \ \ \ Andreas Seeger \ \ \ \ Rajula Srivastava}

\address{Joris Roos: Department of Mathematical Sciences, University of Massachusetts Lowell, Lowell, MA 01854, USA, \& School of Mathematics, The University of Edinburgh, Edinburgh EH9 3FD, UK}
\email{jroos.math@gmail.com} 

\address{Andreas Seeger: Department of Mathematics, University of Wisconsin, 480 Lincoln Drive, Madison, WI, 53706, USA.}
\email{seeger@math.wisc.edu}

\address{Rajula Srivastava: Department of Mathematics, University of Wisconsin, 480 Lincoln Drive, Madison, WI, 53706, USA.}
\email{rsrivastava9@wisc.edu}

\thanks{Research supported in part by NSF grant DMS-1764295}
\date{\today}
\maketitle 

\begin{abstract} We prove  $L^p\to L^q$ estimates for local maximal operators associated with  dilates of codimension two spheres in Heisenberg groups; these are sharp up to two endpoints. The results  can be applied to improve currently known bounds on sparse domination for global maximal operators. We also consider lacunary variants, and extensions to M\'etivier groups. 
\end{abstract}

\section{Introduction}

Let $\bbH^n=\bbR^{2n} \times \bbR$ be the Heisenberg group of real Euclidean dimension $2n+1$. Writing $x=(\ux,x_{2n+1})$ with $\ux\in \bbR^{2n}$, the  group law is given by 
%$$(x,u) \cdot (y,v)=(x+y, u+v+ x^\intercal Jy) \,$$
\[x \cdot y =(\ubar x+\ubar y, x_{2n+1}+y_{2n+1} + \ux^\intercal J\uy), \]
where $\ubar x^\intercal J\ubar y= \frac 12\sum_{j=1}^n(x_{n+j}y_j-x_jy_{n+j})$. A natural dilation structure on $\bbH^n$ is given by the  parabolic dilations
$\delta_t(x)=(t\ubar x, t^2 x_{2n+1}).$ 
These are automorphic, i.e. satisfy 
$\delta_t(x\cdot y)= \delta_tx\cdot\delta_t y$,  and  map the horizontal subspace $\bbR^{2n} \times\{0\}$ into itself.

Let $\mu$ be the normalized rotation-invariant  measure on the $2n-1$ dimensional sphere in $\bbR^{2n} \times \{0\}$, centered at the origin and let $\mu_t$ denote its $t$-dilate  defined by
$\inn{\mu_t}{f}= \inn{\mu}{f\circ\delta_t}$. 
The spherical means on the Heisenberg group,
\[f*\mu_t(x)=\int_{S^{2n-1}} f(\ubar x-t\om, x_{2n+1}-t 
\ux^\intercal J\om ) d\mu (\om)
\] were  introduced 
by Nevo and Thangavelu \cite{NevoThangavelu1997}. 
 In the theory of generalized Radon transforms, they  can be viewed as model   operators for which the incidence relation
(the support of the Schwartz kernel) has codimension two,
in contrast with the classical codimension one spherical means  (\cite{SteinPNAS1976,SchlagSogge1997,Cowling1980,GangulyThangavelu}).

The original interest in \cite{NevoThangavelu1997} was in pointwise convergence and ergodic results and hence  in $L^p$-estimates for the maximal function 
\[\fM f=\sup_{t>0} |f*\mu_t|.\]
A sharp result was proved by M\"uller and the second author \cite{MuellerSeeger2004} and, independently and by a different method,   by Narayanan and Thangavelu \cite{NarayananThangavelu2004}; namely for $n\ge 2$, the $L^p$ boundedness of $\fM$ holds if and only if $p>\frac{2n}{2n-1}$. It is conjectured that this statement holds true even when $n=1$ but this problem is currently still open (see \cite{BeltranGuoHickmanSeeger} for a recent positive result for $\fM$ acting on $L^p(
\bbH^1)$ functions of the form $x\mapsto f_\circ(|\ubar x|,x_3)$).

 Our renewed interest is prompted by recent work of Bagchi, Hait, Roncal and Thangavelu \cite{BagchiHaitRoncalThangavelu}, in which the authors consider $(p,q')$-sparse domination results for the operator $\fM$ with consequences for weighted inequalities \cite{bernicot-frey-petermichl}.  The primary ingredient in the proof of such a result is an induction argument relying on an  $L^p\to L^q$ estimate for the {\it local} maximal function
\[M f=\sup_{t\in I} |f*\mu_t|\] which is also of independent interest.  Here 
$I$ denotes a compact subinterval of $(0,\infty)$. 
The objective  is to find the best possible value of $q$ in such an inequality. For the sparse bounds one also needs to establish a closely related $\eps$-regularity  property, namely an $L^p\to L^q_\eps(L^\infty(I))$ estimate for the spherical means acting on compactly supported functions (\cf. \eqref{eq:eps-regularity result} below). 
If $q>p$ the operator norm in such estimates will depend on $I$ and it is no loss of generality to assume $I=[1,2]$. In \cite{BagchiHaitRoncalThangavelu} it is proved that $M$ maps $L^p(\bbH^n)$ to $L^q(\bbH^n)$ provided that $(\tfrac 1p, \tfrac 1q)$ belongs to the interior of the triangle with corners $(0,0)$, $(\tfrac{2n-1}{2n}, \tfrac{2n-1}{2n})$, $(\tfrac{3n+1}{3n+7}, \frac 6{3n+7})$. The authors ask  whether this result is essentially sharp, indeed results in 
\cite{Schlag1997}, \cite{SchlagSogge1997}, \cite{Lee2003} for the Euclidean analogues 
suggest that  it is not. In the following theorem we
provide $L^p\to L^q$ bounds that are sharp,  possibly except for  two endpoints at which we  prove restricted weak type inequalities. 
Implications on sparse bounds  for the global operator $\fM$ will be discussed in \S\ref{sec:sparse}, \cf. \eqref{sparse-bound}.

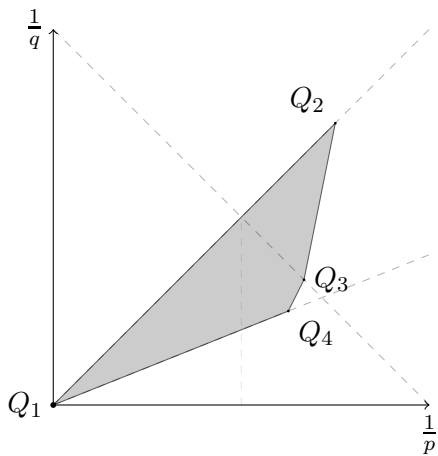
\begin{figure}[ht]
\begin{tikzpicture}[scale=5]
\def\m{1}
\def\n{2}
\def\b{1}
\def\g{1}
\definecoords
\drawauxlines
\drawQbg
\end{tikzpicture}
\caption{The region $\mathcal{R}$ in Theorem \ref{thm:maximal}, for $n=2$.}
\end{figure}
\begin{thm} \label{thm:maximal} Let $n\ge 2$.
Let  $\cR$ be the closed quadrilateral  with corners
\Be\label{quadrilateral}\begin{gathered}
Q_1=(0,0), \qquad Q_2=(\tfrac{2n-1}{2n}, 
\tfrac{2n-1}{2n}),  
\\
Q_3=(\tfrac{n}{n+1},\tfrac{1}{n+1}), \quad 
Q_4=(\tfrac{2n^2+n}{2n^2+3n+2},
\tfrac{ 2n}{2n^2+3n+2}).
\end{gathered} 
\Ee Then 

(i) $M$ is of restricted weak type $(p,q)$ for all $(\frac 1p, \frac 1q)\in \cR$.

(ii) $M:L^p(\bbH^n)\to L^q(\bbH^n)$ is bounded if $(\frac 1p, \frac 1q)$ belongs to the interior of $\cR$, or to the open boundary  segments $(Q_2,Q_3)$, $(Q_3, Q_4)$, or to the half open boundary segments $[Q_1,Q_2)$, $[Q_1,Q_4)$.

(iii) $M$ does not map $L^p(\bbH^n)$ to $L^q(\bbH^n)$  if $(\frac 1p, \frac 1q)\notin \cR$.

(iv) $M$ does not map $L^p(\bbH^n)$ to $L^p(\bbH^n)$   for  $(\tfrac 1p,\tfrac 1p)=Q_2$.
\end{thm}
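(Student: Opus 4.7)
The plan is to prove the four parts by a combination of Heisenberg Fourier analysis, interpolation, and explicit test-function constructions. For the positive direction (parts (i) and (ii)), real interpolation reduces the problem to establishing restricted weak type $(p,q)$ at each of the four vertices $Q_1,\dots,Q_4$; strong type on the open segments claimed in (ii) then follows by Marcinkiewicz from estimates slightly off the endpoints. The vertex $Q_1$ is trivial and the remaining three require a dyadic frequency decomposition adapted to the Heisenberg structure.

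The natural tool is the partial Fourier transform in the central variable: setting $\wh f^\lambda(\ux)=\int e^{-i\lambda x_{2n+1}}f(\ux,x_{2n+1})\,dx_{2n+1}$, one has $\wh{(f*\mu_t)}^\lambda = \wh f^\lambda \times_\lambda \sigma_t$, where $\sigma_t$ is surface measure on the $t$-dilated sphere in $\bbR^{2n}$ and $\times_\lambda$ is $\lambda$-twisted convolution. I would partition into dyadic shells $|\lambda|\sim 2^j$ and bound the associated twisted maximal operator $M^j$ separately. For $j$ small, the operator essentially reduces to a Euclidean spherical maximal operator on $\bbR^{2n}$, so the sharp off-diagonal bounds of Schlag--Sogge and Lee apply and yield the contribution at $Q_3$ and $Q_4$. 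For $j$ large, the oscillating twist $e^{i\lambda t\ux^\intercal J\omega}$ produces, via stationary phase, an $L^2\to L^2$ gain in $2^j$; interpolating with the trivial $L^1\to L^\infty$ kernel bound and summing geometrically yields strong type on the relevant open boundary edges. At each of the three endpoints $Q_2,Q_3,Q_4$ the dyadic summation loses a logarithm; I would recover restricted weak type via a Bourgain-style interpolation on characteristic functions. The diagonal vertex $Q_2$ corresponds to the sharp $L^p$ endpoint of M\"uller--Seeger and Narayanan--Thangavelu, and the same argument adapted to atoms yields restricted weak type there.

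For the necessity parts (iii) and (iv) I would use three test-function families. A Knapp-type slab of anisotropic Heisenberg dimensions, centered tangent to $S^{2n-1}\times\{0\}$, gives a focused set after application of $M$; optimizing in the slab thicknesses produces the line through $Q_3$ and $Q_4$. Scaling the indicator of a fixed ball by the parabolic dilation $\delta_r$ gives the line through $Q_1$ and $Q_4$ (standard Nikodym-type scaling). A further example adapted to the horizontal subspace yields the segment through $Q_2$ and $Q_3$. Together these rule out all $(\tfrac1p,\tfrac1q)\notin\cR$. For the sharp failure at $Q_2$ in (iv), I would adapt the classical endpoint obstruction for Stein's spherical maximal theorem: take $f$ proportional to $\bbone_B(x)|\ux|^{-(2n-1)/p}(\log(1/|\ux|))^{-\theta}$ on a small Heisenberg ball, for a suitable $\theta\in(0,1)$; then $f\in L^p$ but $Mf$ acquires a logarithmic singularity on a set of positive measure, violating $L^p$.

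The main obstacle will be the sharp restricted weak type estimate at $Q_4$, which sits at the intersection of the tangency edge $(Q_3,Q_4)$ and the Nikodym edge $(Q_1,Q_4)$. This is the Heisenberg codimension-two analogue of the endpoint of Stein's spherical maximal theorem and requires a careful handling of the transition between the elliptic regime $|\lambda|\ll|\xi|$, where Euclidean local smoothing governs, and the Heisenberg regime $|\lambda|\gg|\xi|$, where twist-generated oscillation dominates. The dyadic summation in $\lambda$ fails in $L^p\to L^q$ precisely at this endpoint, forcing the restricted weak type formulation and, presumably, the $\eps$-loss that reappears in the sparse bound improvements of \S\ref{sec:sparse}.
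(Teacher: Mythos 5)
Your central-frequency decomposition (dyadic shells $|\lambda|\sim 2^j$ in the variable dual to $x_{2n+1}$, twisted convolution for each shell) is a legitimate alternative starting point --- it is essentially the Narayanan--Thangavelu route --- but as written the plan does not contain the ingredient that produces the vertex $Q_4$, and you yourself flag $Q_4$ as ``the main obstacle'' without supplying a mechanism. Interpolating an $L^2\to L^2$ gain with the $L^1\to L^\infty$ kernel bound only reaches the antidiagonal segment between $(\tfrac12,\tfrac12)$ and $(1,0)$, and interpolating with the trivial $L^p\to L^p$ bounds only reaches the diagonal; this yields at best the triangle $Q_1Q_2Q_3$. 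The point $Q_4=(\tfrac{2n^2+n}{2n^2+3n+2},\tfrac{2n}{2n^2+3n+2})$ satisfies $\tfrac1p+\tfrac1q<1$, so it is inaccessible from those two families of estimates. What is needed is an $L^2\to L^q$ \emph{space-time} (local smoothing) bound with $q=\tfrac{2(d+1)}{d-1}>2$ for the dyadic blocks (Proposition \ref{prop:Q5spacetimeTk}), which in the paper comes from the Mockenhaupt--Seeger--Sogge variant of Stein's oscillatory integral theorem after verifying that the cone $y\mapsto\nabla_{x,t}\Phi$ has the maximal number $d-1$ of nonvanishing curvatures; $Q_4$ is then obtained by Bourgain interpolation of this bound against $L^1\to L^\infty$. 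Your appeal to Schlag--Sogge/Lee in the regime $|\lambda|\lesssim 1$ does not substitute for this: the Euclidean $\bbR^{2n}$ vertices do not have the Heisenberg numerology (the Euclidean antidiagonal vertex is $(\tfrac{2n-1}{2n},\tfrac1{2n})$, not $Q_3=(\tfrac{n}{n+1},\tfrac1{n+1})$; the codimension-two incidence relation changes the $L^1\to L^\infty$ block bound to $2^{2k}$). Two further unaddressed points: the fixed-time $L^2$ estimate suffers a fold loss ($2^{k/6}$, by Cuccagna's theorem), which is removed only by working in $L^2(dx\,dt)$; and the passage from fixed-time to maximal bounds (Sobolev embedding in $t$, costing $2^{k/p}$ as in Proposition \ref{prop:Akmax}) is nowhere accounted for in your exponent bookkeeping.

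On the necessity side, the examples for $p\le q$, for the line $Q_1Q_4$ (parabolic scaling), for the line $Q_2Q_3$ (small balls), and Stein's example at $Q_2$ all match the paper. However, the line $Q_3Q_4$ cannot be obtained from a generic Knapp cap: because of the twist $\ux^\intercal J\omega$ in the central variable, a cap of the usual Euclidean dimensions spreads the central coordinate over a set of size $\gg\delta$ and the count degrades. The paper's example (\S\ref{sec:Q3Q4}) is aligned with the plane $V=\mathrm{span}\{u,Ju\}$, taking width $\delta$ in $V$ and in the central variable but $\delta^{1/2}$ in $V^\perp$, and uses that $J$ preserves both $V$ and $V^\perp$ to force $|\ux^\intercal J\omega|\lesssim\delta$; the paper emphasizes that this construction is new precisely because the naive Knapp example does not give the sharp line. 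Your necessity plan for this edge is therefore underspecified at the one place where a new idea is required.
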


We shall reduce the proof to estimates for standard oscillatory integrals of Carleson-Sj\"olin-H\"ormander type, in particular to a variant of  Stein's theorem  \cite{SteinBeijing} which was formulated in \cite{MSS93} and which relies on the maximal possible number of   nonvanishing curvatures for a cone in the fibers of the canonical relation.  It came as a surprise to the authors that such a simple reduction should be possible;  as far as we know this has not been observed for maximal functions associated with classes of generalized Radon transforms with  incidence relations of codimension two, or higher.
We shall now discuss cases with codimension greater than two.

\subsection*{Some extensions} 
We extend  Theorem \ref{thm:maximal} in two directions, already considered in \cite{MuellerSeeger2004}. One  extension deals with the situation  on $\bbH^n$ where the subspace $\bbR^{2n}\times \{0\}$ 
 is replaced by a general subspace transversal to the center; this tilted space is then no longer invariant under the automorphic dilations. Another extension is obtained by replacing the Heisenberg group with other two step nilpotent groups  
with higher dimensional center; here we will consider the class of M\'etivier groups  \cite{Metivier1980} which also includes the groups of Heisenberg type \cite{Kaplan1980}.

The Lie algebra $\fg$ of a two step nilpotent group $G$ splits as $\fg=\fw\oplus\fz$, so that  $[\fw,\fz]=\{0\}$ and $[\fw,\fw]\subset \fz$. The M\'etivier groups are characterized by a nondegeneracy condition, namely that
for every nontrivial linear functional $\upvartheta$ on $\fz$, the bilinear form
$\cJ^\upvartheta$ on $\fw\times \fw$ defined by $\cJ^\upvartheta=\upvartheta([X,Y])$ is nondegenerate.  This implies that $\fw$ is of even dimension. We set $\dim(\fw)=2n$, $\dim(\fz)=m$ and let $d=2n+m$ denote the Euclidean dimension of $G$. Identifying $\fw$ with $\bbR^{2n}$ and $\fz$ with $\bbR^m$, we  use exponential coordinates $x=(\ubar x, \bar x) \in \bbR^{2n}\times \bbR^m$;  the group multiplication is then given by 
\Be\label{eq:group-law} x\cdot y = (\ubar{x}+\ubar{y}, \bar{x}+\bar{y}+ \ubar{x}^\intercal  J \ubar{y}). \Ee
Here
 $\ux^\intercal   J\uy = \sum_{i=1}^m \ux^\intercal J_i \uy \,\bar  e_i \in \bbR^m$ with $\{\bar e_1,\dots,\bar e_m\}$ being the standard basis in $\bbR^m$ 
and $J_1,\dots,J_m$  denote skew symmetric matrices acting on $\bbR^{2n}$. The nondegeneracy condition on $\cJ^\upvartheta$ then says that for every $\theta\in\bbR^m\setminus\{0\}$, the $2n\times 2n$ matrix
$J^\theta = \sum_{i=1}^m \theta_i J_i $
is invertible. 
In the special case of groups of Heisenberg type we also have $(J^\theta)^2=-|\theta|^2 I$. We note that for every $m$ there are groups of Heisenberg type with  an $m$-dimensional center.   Kaplan \cite{Kaplan1980} points out the connection with Radon-Hurwitz numbers $\RH(k)$ defined as follows (\cite{hurwitz, radon}):  if  $k =(2\ell+1) 2^{4p+q}$ with $q\in \{0,1,2,3\}$ and for some $\ell\in  \{0,1,2,3,\dots\}$, then  $\RH(k) =8p+2^q$.  By \cite{radon}, \cite{Kaplan1980} there are 
($2n+m$)-dimensional groups 
of Heisenberg type with an $m$-dimensional center if and only if $m<\RH(2n)$. For odd $n$, we have $\RH(2n)=2$, hence  $m=1$.

The automorphic dilations on $G$ are  given by 
$\delta_t(\ux,\ox)=(t\ux, t^2\ox)$. 
We now let $\fv$ be a $2n$-dimensional subspace of  $\fg$ which is transversal to the center, i.e. in exponential coordinates \[V=\{(\ux,\Lambda \ux): \,\ux\in \bbR^{2n}\},\]
where $\Lambda$ is an $m\times 2n$ matrix with real entries.
Notice that $V$ is invariant under the dilation group $\{\delta_t\}$ only when $\La=0$.
Define a measure $\mu^\La\equiv\mu_1^\La $ supported on $V$ and its automorphic dilates $\mu_t^\La$  by
\[\inn{\mu_t^\La}{f}=\int_{S^{2n-1} } f(t\om, t^2\La \om) d\mu(\om).\] We consider the convolution  operator  $f\mapsto f*\mu_t^\La$  given explicitly by 
\[ f*\mu_t^\La(x)= \int_{S^{2n-1} } f(\ux-t\om, \ox-t^2 \La \om -t \ux^\intercal J\om) d\mu(\om)
\] and the associated local maximal function 
\Be \label{eq:MLa}M f = \sup_{t\in I} |f*\mu_t^\La|.\Ee These integral operators  
can be viewed as generalized Radon transforms associated to a family of surfaces of codimension $m+1$.
We note that when $m=1$ and $\La=0$, we recover the spherical means on the Heisenberg group considered in Theorem \ref{thm:maximal}. Let $\|\cdot\|$  denote the operator norm  of a matrix with respect to the  Euclidean norm. We state an extension of Theorem \ref{thm:maximal} under the assumption that $\La^\theta =\sum_{i=1}^m\theta_i\La_i$ is sufficiently small.

\begin{thm}\label{thm:main}
Let  $n\ge 2$, $d=2n+m$, let  $M$ be as in \eqref{eq:MLa}, and suppose that $\Lambda$ satisfies
\Be\label{eq:smallness-Lambda}\min_{\theta\in S^{m-1}} \big[ \|(J^\theta)^{-1}\|^{-1} - \|\Lambda^\theta\|\big] >0.
\Ee
Let $\cR$ be the closed quadrilateral with corners
\Be\label{viereck}
\begin{gathered} 
 Q_1=(0,0), \qquad Q_{2}=(\tfrac{d-m-1}{d-m}, \tfrac{d-m-1}{d-m}), \\
 Q_{3} = (\tfrac{d-1}{d+m}, \tfrac{m+1}{d+m}),\quad  Q_{4} = (\tfrac{d(d-1)}{d^2+(d+1)m+1}, \tfrac{(m+1)(d-1)}{d^2+(d+1)m+1}) .
 \end{gathered}
 \Ee
 Then 

(i) $M$ is of restricted weak type $(p,q)$ for all $(\frac 1p, \frac 1q)\in \cR$.

(ii) $M:L^p(\bbH^n)\to L^q(\bbH^n)$ is bounded if $(\frac 1p, \frac 1q)$ belongs to the interior of $\cR$, or to the open boundary  segments $(Q_{2},Q_{3})$, $(Q_{3}, Q_{4})$, or to the half open boundary segments $[Q_1,Q_{2})$, $[Q_1,Q_{4})$.

\end{thm}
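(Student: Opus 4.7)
The plan is to extend the reduction used for Theorem \ref{thm:maximal} to this setting, treating the tilt $\Lambda$ as a perturbation of the $\Lambda = 0$ case, with \eqref{eq:smallness-Lambda} serving as the quantitative hypothesis that preserves the underlying geometric nondegeneracy. First I linearize the supremum by choosing a measurable $t : G \to I$, then take a partial Fourier transform in the central variable $\ox \in \bbR^m$. Writing the dual variable as $\bar\eta = \tau \theta$ with $\theta \in S^{m-1}$ and applying a Littlewood--Paley decomposition $\tau \asymp 2^k$ reduces matters to a dyadic family of oscillatory integral operators $T_k$ with large parameter $\tau$ and phase
\[
\Phi(x,\omega;\theta,t) \;=\; t\, \ux^\intercal J^{\theta} \omega \;+\; t^2 (\Lambda^{\theta})^\intercal \omega,
\]
integrated against a smooth density on $\omega \in S^{2n-1}$.

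Next I analyze the canonical relation of $T_k$. The critical point equation in $\omega$ takes the form $t J^{\theta} \ux + t^2 (\Lambda^{\theta})^\intercal = r\omega$ for a scalar multiplier $r$, and condition \eqref{eq:smallness-Lambda} ensures that $J^{\theta}$ is uniformly invertible over $\theta \in S^{m-1}$ while $\Lambda^{\theta}$ remains a strictly controllable perturbation (with a positive uniform spectral gap). This allows one to solve smoothly for $\omega$ as a function of $(\ux,\theta,t)$ and to verify that the mixed Hessian has maximal rank, with the cone of covectors in each fiber of the canonical relation carrying the maximum possible number $2n-2$ of nonvanishing principal curvatures, just as in the Heisenberg case $m=1$, $\Lambda=0$. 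The Stein--MSS variant \cite{SteinBeijing,MSS93} of the Carleson--Sj\"olin theorem then furnishes the $L^p \to L^q$ bound for each dyadic piece $T_k$, with gain in $2^{-k}$ calibrated by the cone-curvature count. The $\varepsilon$-regularity step in $t$ (converting the linearization back into a genuine maximal bound), summation in $k$, and interpolation with the trivial $L^\infty$ bound at $Q_1$ deliver the interior of $\cR$ and the stated open and half-open boundary segments; the restricted weak-type assertion (i) on all of $\cR$ follows by Lorentz-space interpolation between neighboring strong-type bounds at the vertices $Q_2$, $Q_3$, $Q_4$, as in the proof of Theorem \ref{thm:maximal}.

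The main obstacle will be the uniform verification of the cone-curvature count in the presence of both the tilt $\Lambda$ and the full family $\{J^{\theta}\}_{\theta \in S^{m-1}}$ that encodes the M\'etivier nondegeneracy. For $\Lambda = 0$ the computation reduces to the determinant structure of $J^{\theta}$, but for $\Lambda \neq 0$ the inhomogeneous term $t^2 (\Lambda^{\theta})^\intercal$ introduces an affine correction in the critical equation that must be shown not to degenerate any of the relevant mixed Hessians. The hypothesis \eqref{eq:smallness-Lambda} is precisely what prevents this correction from overwhelming the leading Heisenberg term at any $(\theta,t) \in S^{m-1}\times I$, and making the resulting perturbative argument quantitative and uniform is the essential technical point of the proof.
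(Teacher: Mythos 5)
Your overall strategy (reduce to oscillatory integral operators, invoke the Stein--MSS cone condition, interpolate) points in the right direction, but three of your concrete steps would fail as stated. First, linearizing the supremum with a measurable $t:G\to I$ is not compatible with the oscillatory integral machinery you then want to apply: once $t=t(x)$ is merely measurable, the phase is no longer smooth in $x$ and neither H\"ormander's theorem nor the MSS variant of Carleson--Sj\"olin applies. More importantly, the decisive gain that makes the vertex $Q_4$ reachable is a \emph{local smoothing} gain from averaging in $t$, which linearization destroys. The paper instead controls $\sup_t$ by the Sobolev embedding \eqref{eq:Sobolev1D} and proves space--time bounds (Proposition \ref{prop:space-time-est}) for $A^k f(x,t)$ on $\bbR^d\times[1,2]$. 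This is forced by the geometry: for fixed $t$ the canonical relation has a two-sided fold where the rotational curvature $\sigma$ of \eqref{eq:sigma} vanishes, and it is only the $t$-derivative $\Xi_{y}$-columns that restore the maximal rank $d$ in \eqref{eq:rank-condition}. Your proposal never confronts this degeneracy. Second, your frequency decomposition is in the $m$ central variables only, whereas the incidence relation has codimension $m+1$; the paper oscillates in all $m+1$ dual variables $(\theta_{2n},\bar\theta)$ and obtains operators on $\bbR^d$ whose fiber cone must have the maximal number $d-1=2n+m-1$ of nonvanishing principal curvatures (condition \eqref{eq:rank-curv}), not $2n-2$ as you assert; verifying this count, via Lemma \ref{lem:skew-sym} and the lower bound on the quantity $c$ in \eqref{c-lowerbound}, is exactly where \eqref{eq:smallness-Lambda} is used, and your count would not yield the exponent $q_5=\tfrac{2(d+1)}{d-1}$ needed for $Q_4$.

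Third, the claim that restricted weak type on $\cR$ ``follows by Lorentz-space interpolation between neighboring strong-type bounds at the vertices'' is circular: strong type \emph{fails} at $Q_2$ (part (iv) of Theorem \ref{thm:maximal}), and no interpolation between interior points produces a vertex. The actual mechanism is Bourgain's interpolation trick applied to the dyadic pieces: at each vertex one pairs an estimate with exponential growth $2^{ka_0}$ against one with exponential decay $2^{-ka_1}$ (e.g.\ \eqref{eqn:maxL2qest} at $q=q_5$ against the $q=\infty$ case of \eqref{eqn:maxantidiagest} for $Q_4$), as carried out in \S\ref{sec:interpolpf}. To repair your argument you would need to (a) replace linearization by the space--time reduction, (b) redo the curvature analysis for the full $(m+1)$-codimension phase, tracking how \eqref{eq:smallness-Lambda} prevents both the loss of rank in \eqref{eq:rank-condition} away from $\sigma=0$ and the degeneration of the cone curvatures, and (c) set up the summation in $k$ so that the vertex bounds come from the Bourgain trick rather than from nonexistent strong-type endpoint estimates.
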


\begin{remarksa} 
(i) For $m=1$ and $\La=0$, we recover the positive results in Theorem \ref{thm:maximal} for the spherical means on Heisenberg groups.

(ii) 
Under the smallness condition \eqref{eq:smallness-Lambda}, we give an alternative proof 
of the result for maximal operators  in \cite{AndersonCladekPramanikSeeger} which  relied on decoupling estimates to prove local  $L^p\to L^{p}_{(2n-1)/p'}$ regularity results for the averaging operators acting on compactly supported functions on $\bbH^n$, in the range $1<p<\frac{4n+2}{2n+3}$. Our approach is to use $L^p$ space-time estimates instead. However,  the $L^p$-Sobolev result in \cite{AndersonCladekPramanikSeeger} is interesting in its own right, and is still needed for the $L^p(\bbH^n)$ boundedness of the maximal operator in the range $p>\frac{2n}{2n-1}$ if one does not impose any condition on $\La$. The use of $L^2$ space-time estimates  is implicit already  in the work by Narayanan and Thangavelu 
\cite{NarayananThangavelu2004} who use  the group Fourier transform on the Heisenberg group to estimate a relevant square-function involving generalizations of spherical means. $L^2$ space-time estimates for the relevant Fourier integral operators have also been used in a more recent paper by Joonil Kim \cite{KimJoonil}; for the basic idea see also  the work on  variable coefficient Nikodym  estimates in \cite{MSS93}. 

(iii)
For $m+3\le 2n$, one  can use an alternative approach to the $L^{q'}\to L^q$ estimates which does not require  assumption \eqref{eq:smallness-Lambda}, \cf.  Remark \ref{rem:antidiag-max}. One also obtains the endpoint restricted weak type estimate for the point $Q_2$ provided that $m+3<2n$.

(iv) The example in \S\ref{sec:Q3Q4} demonstrating the sharpness of the line $Q_3Q_4$ in the case $m=1$ seems to be new. It would be interesting to see whether there exists similar examples for $m\ge 2$ and to settle the problem of sharpness for those cases. 
%find extension to higher dimensions and  establish the sharpness of the line $Q_3Q_4$ for the cases $m\ge 2$. For the Heisenberg groups (with $m=1$)  this is done in \S\ref{sec:Q3Q4}.
It  would also be interesting to analyze  what happens  when  the size restriction \eqref{eq:smallness-Lambda} on $\La$ is dropped.

\end{remarksa}

\subsection*{\texorpdfstring{$L^p$ improving estimates for  spherical averages}{Spherical averages}} We now discuss another problem considered in \cite{BagchiHaitRoncalThangavelu},  concerning sparse bounds  for the lacunary spherical maximal function $\sup_k|f*\mu_{2^k}|$ on the Heisenberg groups. Again, essentially sharp sparse bounds (\cf. \eqref{sparse-bound-lac}) follow from essentially sharp results on 
the $L^p(\bbH^n) \to L^q(\bbH^n)$ boundedness  for the averaging operators  $f\mapsto f*\mu$ 
and a closely related $\eps$-regularity property.
In \cite{BagchiHaitRoncalThangavelu} a partial result is proved;   namely the $L^p(\bbH^n)\to L^q(\bbH^n)$ boundedness   holds for $n\ge 2$ if $(\frac 1p,\frac 1q)$ belongs to the triangle with corners $(0,0)$, $(1,1)$ and  $(\tfrac{3n+1}{3n+4}, \frac {3}{3n+4})$; further, the method of \cite{BagchiHaitRoncalThangavelu} does not seem to yield a result for $n=1$.  
Here we prove  sharp results for all Heisenberg groups; indeed we formulate a general result for M\'etivier groups of dimension $d=2n+m$. 
\begin{thm}\label{thm:sphmeans}
(i) When $n\ge 2$ and $m<2n-2$,  the inequality 
\Be\label{sphmeans} \|f*\mu^\La\|_{L^q(G)} \lc \|f\|_{L^p(G) }\Ee  holds for all $f\in  L^p(G)$ if and only if $(\tfrac 1p, \tfrac 1q)$ belongs to the closed triangle $\triangle(P_1P_2P_3)$ with 
$P_1=(0,0)$, $P_2=(1,1)$ and $P_3=(\frac{2n+m}{2n+2m+1}, \frac{m+1}{2n+2m+1}).$

(ii) For $m=2n-2$, inequality \eqref{sphmeans}  holds if $(1/p,1/q) \in \triangle(P_1P_2P_3)\setminus P_3$. It fails for $(\tfrac1p,\tfrac1q)\notin \triangle(P_1P_2P_3)$.

(iii) For $m=2n-1$, inequality   \eqref{sphmeans}  holds if $(\tfrac1p,\tfrac1q)$ lies in the convex hull of 
$\{(0,0)$, $(1,1)$, $(\tfrac{4m^2+3m+1}{6m^2+5m+1}, \tfrac{m+1}{3m+1})$, 
$(\tfrac{6m+1}{9m+3}, \tfrac{3m+2}{9m+3})$, $(\tfrac {2m}{3m+1}, \tfrac{2m^2+2m}{6m^2+5m+1})$\}. This result is sharp at least when $m=1$. 
\end{thm}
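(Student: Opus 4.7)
The plan is to reduce the three positive assertions to boundedness at a small number of extremal vertices and then interpolate, while handling sharpness by direct counterexamples. The corners $P_1=(0,0)$ and $P_2=(1,1)$ give the trivial $L^\infty$ and $L^1$ bounds since $\mu^\Lambda$ is a probability measure; the whole closed triangle in part (i), the open triangle in part (ii), and the interior of the pentagon in part (iii) then follow by Riesz--Thorin (and real) interpolation, so only the endpoint $P_3$---and, in part (iii), the two additional extremal vertices---need to be proved directly. At $P_3$ one has $\tfrac1p+\tfrac1q=1$ and $\tfrac1p-\tfrac1q=\tfrac{2n-1}{2n+2m+1}$, so this endpoint is of the symmetric $L^p\to L^{p'}$ form.

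To prove the endpoint I would adapt the Fourier integral framework used in the proof of Theorem~\ref{thm:main}. Decompose $\mu^\Lambda=\sum_{k\ge 0}\mu^\Lambda_k$ via a Littlewood--Paley partition in the dual variable $\tau\in\bbR^m$ to the central coordinate $\ox$, with $\widehat{\mu^\Lambda_k}$ supported in $|\tau|\sim 2^k$. Taking the partial Fourier transform in $\ox$, the operator $f\mapsto f*\mu^\Lambda_k$ becomes a $\tau$-parametrized family of oscillatory integral operators on $\bbR^{2n}$ with phase $(\Lambda^\intercal\tau+J^\tau\ux)^\intercal\omega$, $\omega\in S^{2n-1}$. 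After parabolic rescaling to unit frequency, this is a Carleson--Sj\"olin--H\"ormander phase whose canonical relation carries a cone bundle; the number of nonvanishing principal curvatures in the cone fibers is controlled by the maximal rank $2n$ of $J^\tau$ (from the M\'etivier nondegeneracy) together with the $m$ flat central directions. Applying the MSS93 variant of Stein's $L^p$ Fourier integral theorem to each piece and summing in $k$ yields the endpoint: the geometric series closes in case (i) ($m<2n-2$), is logarithmically borderline in case (ii) ($m=2n-2$, which forces the removal of $P_3$), and in case (iii) ($m=2n-1$) fails outright because two curvatures of the cone vanish, so that Stein--MSS delivers only a strictly smaller range; the pentagon of vertices in part (iii) then arises after further decompositions adapted to the degenerate directions.

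Sharpness is by Knapp-type constructions. Testing $T$ against the indicator of a $\delta$-tubular neighborhood of the surface $V=\{(\omega,\Lambda\omega):|\omega|=1\}$ and optimizing $\delta$ produces the condition defining the segment $[P_1,P_3]$; the segment $[P_2,P_3]$ follows by duality, and $q\ge p$ on $[P_1,P_2]$ is standard. In part (iii) with $m=1$ (so $n=1$), the new sharpness example from \S\ref{sec:Q3Q4}, constructed for the maximal operator on $\bbH^1$, also saturates the corresponding vertices of the pentagon for the $t=1$ averaging operator; extending such examples to $m\ge 2$ remains open, as already noted after Theorem~\ref{thm:main}. The principal obstacle is the handling of the two borderline cases: in (ii) one must salvage the open triangle without access to $P_3$, and in (iii) compensate for the two missing curvatures of the cone fiber---I expect this to require combining the $L^2$ Sobolev regularity of \cite{AndersonCladekPramanikSeeger,KimJoonil} with sharp space--time oscillatory integral estimates, interpolated carefully against the new counterexample to carve out precisely the pentagon.
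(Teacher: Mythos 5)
Your architecture (trivial bounds at $P_1,P_2$, a hard endpoint estimate at $P_3$, interpolation, Knapp-type sharpness) matches the paper's in outline, but the core of your endpoint argument misidentifies the mechanism that produces the trichotomy $m<2n-2$, $m=2n-2$, $m=2n-1$, and the step on which everything hinges is missing. First, the decomposition you propose is only in the central frequency $|\tau|\sim 2^k$; at the endpoint $P_3$ the resulting pieces have operator norms that are \emph{uniform} in $k$ (that is what it means to be at the endpoint), so "the geometric series closes in case (i)" cannot be right --- the $k$-sum must instead be assembled by a Littlewood--Paley almost-orthogonality argument, which is why the paper can only conclude directly when $p\le 2\le q$. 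The trichotomy actually comes from a second, finer decomposition that your proposal does not contain: a dyadic decomposition in the distance to the fold surface $\sigma=0$ of the canonical relation (in the paper, the localization $|\theta_{2n}|\approx 2^{-\ell}$). There the $L^1\to L^\infty$ kernel bound gains $2^{-\ell}$ while the $L^2\to L^2$ bound loses $2^{\ell/2}$ (Cuccagna's two-sided fold estimate), so the interpolated $L^{q'}\to L^q$ bound carries a factor $2^{\ell(3/q-1)}$ whose sum over $\ell$ converges iff $q>3$; since the endpoint exponent is $q=\tfrac{d+m+1}{m+1}$, the threshold $q=3$ is exactly $m=2n-2$. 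Relatedly, your invocation of the MSS93/Stein cone condition is the wrong tool here: that theorem governs the space-time estimates for the \emph{maximal} operator (Proposition \ref{prop:Q5spacetimeTk}), whereas the fixed-time average is governed by fold theorems (Phong--Stein, Cuccagna, Greenleaf--Seeger). In particular, the extra vertices of the pentagon in (iii) come from the Greenleaf--Seeger $L^2\to L^{2d/(d-1)}$ bound for the piece supported near the fold, which requires verifying that the image cone $\Sigma^{\mathrm{fold}}_{x,t}$ has $d-2$ nonvanishing curvatures (Proposition \ref{prop:L2-Lq-av}); "combining $L^2$ Sobolev regularity with space-time estimates" will not substitute for this. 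There is also an issue with your reduction itself: conjugating by the partial Fourier transform in $\ox$ and applying an $L^p$ theorem to the conjugated operator does not return an $L^{q'}\to L^q$ bound for the original operator unless $q=2$; the paper avoids this by staying in physical space with the oscillatory representation \eqref{eq:CalAk} of the kernel.

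The sharpness claims also have a gap in part (iii). For $\bbH^1$ the necessary conditions $q\ge p$, $\tfrac1q\ge\tfrac{m+1}{2n+m}\tfrac1p$ and its dual do not suffice: the edge joining $(\tfrac12,\tfrac13)$ and $(\tfrac23,\tfrac12)$ corresponds to the additional condition $6(\tfrac1p-\tfrac1q)\le 1$, which is \emph{not} produced by the example of \S\ref{sec:Q3Q4} (that construction uses the splitting $\bbR^{2n}=V\oplus V^\perp$ with $\dim V=2$ and degenerates when $n=1$; moreover the inequality it yields, $\tfrac nq+n\ge\tfrac{n+2}p$, fails to be saturated at $(\tfrac23,\tfrac12)$). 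The correct example is the moment-curve-type parallelepiped construction of \S\ref{sec:sharpness-n=1}, testing the circular average on the indicator of a box of dimensions $\delta^2\times\delta\times\delta^3$ adapted to the curve $s\mapsto(\cos s,\sin s,\cdot)$. Your treatment of the segments $[P_1,P_3]$ and (by duality) $[P_2,P_3]$ for $n\ge2$ is fine.
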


In contrast to Theorem \ref{thm:main}, no assumption on $\La$ is needed; in fact, the estimate for convolution with  $\mu^\La_t$ (with {\it fixed $t$}) is equivalent to the corresponding inequality for $\La=0$, as one can see by a change of variable argument involving shear transformations. 

The above result be obtained using essentially known results on generalized Radon transforms and oscillatory integral operators with fold singularities (cf. \cite{PhongStein11991}, \cite{GreenleafSeeger1994}, \cite{Cuccagna1997}). 
For $m=1$ (the Heisenberg case)  the pentagon in (iii) reduces to a trapezoid and we get the sharp result
\begin{cor}\label{cor:sphmeans} The inequality 
\Be\label{Hsphmeans} \|f*\mu^\La\|_{L^q(\bbH^n)} \lc \|f\|_{L^p(\bbH^n) }\Ee  holds for all $f\in  L^p(\bbH^n)$ if and only if  one of the following holds:

(i) $n\ge 2$ and 
$(\tfrac 1p, \tfrac 1q)$ belongs to the closed triangle  with corners 
$(0,0)$, $(1,1)$ and $(\frac{2n+1}{2n+3}, \frac{2}{2n+3}).$

(ii)  $n=1$ and  $(\tfrac 1p, \tfrac 1q)$ belongs to the closed trapezoid with  corners 
$(0,0)$, $(1,1)$, $(\frac 23, \frac 12)$,  $(\frac 12,\frac 13)$.
\end{cor}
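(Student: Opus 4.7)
The plan is to obtain Corollary \ref{cor:sphmeans} as a direct specialization of Theorem \ref{thm:sphmeans} to the Heisenberg setting $m=1$. No additional analytic machinery is needed; the content of the corollary is purely a matter of substitution and of recognizing when a pentagon degenerates to a trapezoid.

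For part (i), $n\ge 2$ gives $m=1<2n-2$, so Theorem \ref{thm:sphmeans}(i) applies and already furnishes an ``if and only if'' characterization. Substituting $m=1$ into the formula for $P_3$ yields $P_3=(\tfrac{2n+1}{2n+3},\tfrac{2}{2n+3})$, and the triangle $\triangle(P_1P_2P_3)$ coincides with the one asserted in the corollary. So (i) is immediate.

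For part (ii), $n=1$ forces $m=1=2n-1$, placing us in the regime of Theorem \ref{thm:sphmeans}(iii), whose statement is asserted to be sharp precisely for $m=1$. Inserting $m=1$ in the five listed vertices produces
\begin{equation*}
(0,0),\ (1,1),\ (\tfrac{2}{3},\tfrac{1}{2}),\ (\tfrac{7}{12},\tfrac{5}{12}),\ (\tfrac{1}{2},\tfrac{1}{3}).
\end{equation*}
A one-line check shows that $(\tfrac{7}{12},\tfrac{5}{12})$ is the midpoint of the segment joining $(\tfrac{2}{3},\tfrac{1}{2})$ and $(\tfrac{1}{2},\tfrac{1}{3})$, hence it is a redundant vertex. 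The convex hull therefore collapses to the closed trapezoid with corners $(0,0),(1,1),(\tfrac{2}{3},\tfrac{1}{2}),(\tfrac{1}{2},\tfrac{1}{3})$, exactly as in the corollary, and sharpness is inherited from Theorem \ref{thm:sphmeans}(iii).

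Since the deduction is purely algebraic, the only genuine point to verify is the collinearity observation in part (ii); there is no real ``hard step'' here, with all the analytic burden residing in the proof of Theorem \ref{thm:sphmeans}. If one wanted to give an intrinsic proof without appealing to the general M\'etivier case, the main obstacle would be producing the endpoint bound at the trapezoid vertices $(\tfrac{2}{3},\tfrac{1}{2})$ and $(\tfrac{1}{2},\tfrac{1}{3})$ for $\bbH^1$, which by the proof of Theorem \ref{thm:sphmeans}(iii) requires $L^2$-based estimates for oscillatory integral operators with one-sided fold singularities (as in \cite{GreenleafSeeger1994}, \cite{Cuccagna1997}) to replace the Carleson--Sj\"olin--H\"ormander input available when $m<2n-1$.
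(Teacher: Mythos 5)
Your proposal is correct and follows exactly the paper's route: Corollary \ref{cor:sphmeans} is obtained by setting $m=1$ in Theorem \ref{thm:sphmeans}, using part (i) for $n\ge 2$ and part (iii) for $n=1$, where the vertex $(\tfrac{7}{12},\tfrac{5}{12})$ is indeed the midpoint of the segment from $(\tfrac23,\tfrac12)$ to $(\tfrac12,\tfrac13)$ so the pentagon degenerates to the stated trapezoid. The substitutions and the collinearity check are all accurate, and sharpness for $n=1$ is correctly attributed to the sharpness assertion in Theorem \ref{thm:sphmeans}(iii).
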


\begin{remarka} 
In view of the restriction $m<\RH(2n)$, only  cases with $m\le 2n-1$ occur in Theorem \ref{thm:sphmeans}. Observe that 
$\RH(4n+2)=2$,  and $\RH$ takes the values $4,8,4,8$ for $2n=4,8,12,16.$ 
Also 
$\RH(2n)<  2\log_2 (2n) +3$, hence clearly  $\RH(2n)\le 2n-2$ for $2n\ge 10$. 

The only cases with $m=2n-1$ are  
$(m,2n+m)=(1,3)$, $(7,15)$. In these instances,  the codimension $m+1$  of our sphere in $G$ exceeds half of the dimension of $G$. The first situation ($m=1$ and $2n=2$) corresponds to the Heisenberg group $\bbH^1$, for which the region in part (iii) of Theorem \ref{thm:sphmeans} is a trapezoid. In this case, we also establish  the sharpness of our result.

In the only two cases with   $m=2n-2$, namely $(m,2n+m)=(2,6)$,  $(6,14)$, we do not have a definitive answer for the  endpoint $P_3= (\tfrac{2n+m}{2n+2m+1}, \tfrac {m+1}{2n+2m+1})$.   
All endpoints in all the other cases are covered, since  part (i) of the theorem applies.
\detail{(ii) For the endpoint result at the corner $P_3$ one obtains an even stronger bound, namely that $\cA_t$ maps $L^{pr} (G) $ to $L^{p'r} (G)$ for $p=\frac{d+m+1}{d}$, and  $p\le r\le p'$.
For the  case $n=1$ we get stronger Lorentz space estimates at the corners 
 $(\frac 23, \frac 12)$ and   $(\frac 12,\frac 13)$, namely for $n=1$ the circular averaging operators $\cA_t$  map  
$L^{3/2,2}(\bbH^1) $ to $L^2(\bbH^1)$ and $L^2(\bbH^1)$ to $L^{3, 2} (\bbH^1)$.
On the edges improvements in the Lorentz scales follow simply by real interpolation.}

\detail{ (iii) The reader may  note that the numerology for the result for $n\ge 2$ is the same as in the nonvanishing rotational curvature case; this is due to the specific form of the fold surface in our example which leads to certain improvements in the $L^1\to L^\infty$ bound for  localizations near the fold surface (see the discussion in \S\ref{sec:proofoftheoremaverage}). Such an improvement  for localizations near the fold is also observed in the $L^p$-Sobolev  bounds  \cite{AndersonCladekPramanikSeeger} for the averaging operators (although the proof of those inequalities rely on more  complicated decoupling technology). }
\end{remarka}

\subsection*{Further directions}
It would also be interesting to investigate $L^p\to L^q$ mapping properties of maximal functions with respect to arbitrary dilation sets $E\subset[1,2]$ (see \cite{AHRS,RoosSeeger} for the Euclidean analogue of this question). We will  take up this problem in a subsequent paper \cite{rss2}.

\subsection*{Plan of the paper}  The proof of Theorem \ref{thm:main} is contained in the next three sections. In \S\ref{sec:main-results} we describe the basic estimates and how they can be reduced to problems about oscillatory integral operators. In \S\ref{sec:oscint} and \S\ref{sec:SToscest} we show how to apply in our context two well known theorems on oscillatory integral operators acting on $L^2$ functions.
Theorem \ref{thm:sphmeans} will be proved in \S\ref{sec:proofoftheoremaverage}. We establish the necessary conditions in  \S\ref{sec:sharpness} and \S\ref{sec:sharpness-II}. In \S\ref{sec:sparse} we briefly discuss the implications for sparse bounds.

\subsection*{Notation}
Partial derivatives will often be denoted by subscripts. 
$P$ denotes  the $(2n-1)\times2n$ matrix $P=(I_{2n-1}\,\, 0)$. \detail{Then $PJ$ is a $(2n-1)\times2n$ matrix with its rows identical to the first $2n-1$ rows of $J$.}
By $A\lesssim B$ we mean that $A\le C\cdot B,$ where $C$ is a constant and $A\approx B$ signifies that $A\lesssim B$ and $B\lesssim A$.
For coefficient vectors  $\bar y=(\bar y_1,\dots, \bar y_m)$, and sets of $1\times 2n$ vectors $\{\La_i\}_{i=1}^m$, or $2n\times 2n$ matrices  $\{J_i\}_{i=1}^m$,  we abbreviate $\La^{\bar y}=\sum_{i=1}^m \bar y_i\La_i$  and $J^{\bar y} =\sum_{i=1}^m \bar y_i J_i$. 

\section{Main estimates}
\label{sec:main-results} 
We use the notation $*_J$ for convolution  when the choice of $J$ in \eqref{eq:group-law} is emphasized. Let $\upsilon$ be a nonnegative bump function on $\bbR^{2n}$ supported in a neighborhood  of $e_{2n}$, normalized so that $\int_{SO(2n)} \upsilon(R^{-1}e_{2n}) dR=1$; here $dR$ denotes the normalized Haar measure on $SO(2n)$. Then we have $\int_{SO(2n)}\upsilon(R^{-1} \om) dR=1$ for all $\om\in S^{2n-1}$, and using this, Fubini's theorem and a change of variables,  we can write the convolution as 
\detail{\[ \int_{R\in SO(2n) } \int f_{R}(R^\intercal x-t\om, \bar x- t^2\La R\om -t  (R^\intercal x)^\intercal R^\intercal JR \om  ) \upsilon(\om)d\sigma(\om)
\] where $f_R(y)=f(Ry)$. }  
\[ f*\ci J \mu^\La_t(x)=\int_{R\in SO(2n)}  f_R*\ci{R^\intercal JR} [\upsilon \mu^{\Lambda R} ]_t  ( R^\intercal \ubar x,\bar x) dR,
\]
where $f_R(y)=f(R\ubar y,\bar y)$.
Note that replacing $(J_i,\La_i)$ with $(R^\intercal J_i R, \La_i R)$ does not affect condition \eqref{eq:smallness-Lambda} and therefore, by the integral Minkowski inequality, it suffices to prove our theorems with $\mu^\La$ replaced
 by $\upsilon\mu^\La$.
 
 By a localization argument we may assume that the function $f$ is supported in a small neighborhood of the origin. To see this we use the group translation to tile $G$. Let $Q_0=[-\tfrac 12,\tfrac 12)^{2n+m}$ and, for $\fn\in \bbZ^{2n+m}$, let $Q_{\fn}= \fn \cdot Q_0$, i.e.
 $Q_{\fn}=\{(\ubar \fn+\ubar z,\bar \fn+\bar z+\ubar \fn J\ubar z): z\in Q_0\}$.
 One then verifies that $\sum_{\fn\in \bbZ^{2n+m} }\bbone_{Q_\fn}=1$.
 Moreover, the measures $\mu_t$ are supported in $\{w\in G: |\ubar w|\le 2, |\bar w| \le 4\|\La\|\}$, hence in the union of $Q_\fk$ with 
 $|\fk_j|\le 2$, $j\le 2n$, $|\fk_{2n+i}|\le 2+4\|\La\|$, $i=1,\dots, m$. Denote this set of indices by $\fJ$. Then  \[\supp \big([f\bbone_{ Q_\fn}]*\mu_t\big) \subset 
 \bigcup_{\fk\in \fJ} (\fn\cdot Q_0\cdot Q_\fk) \subset \bigcup_{\tilde \fn\in \fI(\fn)} Q_{\tilde \fn},
 \]
 where $\fI(\fn)$ is a  set of indices $\tilde\fn$ with $|\fn_j-\tilde \fn_j|\le C(\La, J,n) $ for $j=1,\dots, 2n+m$. This consideration allows us to reduce to the case where $f$ is supported in a small neighborhood of the origin.
 
 Splitting $\ubar y=(y', y_{2n})$ and using the parametrization $\om=(w', g(w')) $ with $g(w')=\sqrt{1-|w'|^2}$ near the north pole $e_{2n}$ of the sphere,
 we are led to  consider the generalized Radon transforms associated to the incidence relation 
 given by the equations 
 \Be \label{eq:defining equations} y_{2n}= \fS^{2n}(x,t,y'), \quad \bar y= \bar \fS(x,t,\ubar y) \Ee where 
  \begin{subequations}
   \label{eq:Psidefb} 
   \begin{align}\label{eq:Psidefb1} 
\fS^{2n} (x,t,y')&= x_{2n}- tg (\tfrac{x'-y'}t)
\\
\label{eq:Psidefb2} 
\overline\fS (x,t,\underline y)&= 
\overline{x} +t \Lambda (\ubar{x}-\ubar{y}) +\underline{x}^\intercal J \underline{y},
\end{align}
\end{subequations}
where $y'$
is small, $\ubar x$ is near $e_{2n}$ on the support of $\upsilon$ and 
\begin{equation}\label{eq:Psidefc}
g(0)=1,\; \nabla g(0)=0,\; g''(0)= -I_{2n-1},\; g'''(0)=0.
\end{equation}
  Using \eqref{eq:defining equations} and \eqref{eq:Psidefb1} to express $y_{2n}$ in \eqref{eq:Psidefb2}, we conclude that \eqref{eq:defining equations} is equivalent with  \Be \label{eq:defining equations-2} \begin{aligned} y_{2n}&= \fs^{2n}(x,t,y'):=\fS^{2n}{(x,t,y')}, \\  \bar y&=  \bar \fs(x,t,y') 
:= \bar \fS(x,t,y', \fS^{2n}(x,t,y')).
 \end{aligned}
 \Ee
 Recall that   $P= \begin{pmatrix}I_{2n-1} &0\end{pmatrix}$. 
 We compute for $i=1,\dots, m$,
 \begin{align*} 
\overline \fs_i(x,t, y')=&\ox_i + t\La_i\ux 
-t\La_i P^\intercal y' 
+ \ux^\intercal J_i P^\intercal y'  
\\&
+\big(x_{2n}-tg(\tfrac{x'-y'}{t})\big) 
(\ux^\intercal J_i e_{2n}- t\La_ie_{2n})
\end{align*} and also have $\fs^{2n}(x,t,y') =x_{2n}-t g(\tfrac{x'-y'}{t}).$
  We can thus write, for $f$ with small support near $0$,
 \[ f*(\upsilon \mu^\La)_t (x) =
   \int \chi_1(x,t,y') f(y',\fs^{2n}(x,t,y'), \bar\fs(x,t, y') ) dy',\]
 where $\chi_1$ is a smooth and compactly supported function so that on its support $y'$ is small and $\ux$ is near $e_{2n}$.
The right hand side represents an operator  with  Schwartz kernel
\[K(x,t,y)= \chi_1 (x,t,y') \delta_0( \fs^{2n}(x,t,y') -y_{2n} , \overline\fs (x,t, y')-\overline y),\] where $\delta_0$ denotes the Dirac measure at  the origin in $\bbR^{m+1}$.
We express $\delta_0$ via the Fourier transform \Be\label{eq:delta=expansion}K(x,t,y) =
\chi_1 (x,t,y') \int_{\theta\in \bbR^{m+1}}
e^{i\psi(x,t,y,\theta)} \tfrac{d\theta}{(2\pi)^{m+1}} \Ee
with 
\Be\label{eq:Psidef} \psi(x,t,y,\theta) = \theta_{2n}(\fs^{2n}(x,t,y') -y_{2n} ) 
+\overline \theta\cdot (\overline\fs (x,t,y')-\overline y).
\Ee
Note that $K$ is well defined as an oscillatory integral distribution
(indeed from definition \eqref{eq:defining equations-2} we see that 
$x\mapsto K(x,t,y)$ and $y\mapsto K(x,t,y)$ are well defined as oscillatory integral distributions  on $\bbR^d$). 

We now perform a dyadic decomposition of this modified kernel. 
Let $\zeta_0$ be a smooth radial function on $\R^{m+1}$ with compact support in $\{|\theta|< 1\}$ such that $\zeta_0(\theta)=1$ for $|\theta|\le 1/2$. Setting $\zeta_1(\theta)=\zeta_0(\theta/2)-\zeta_0(\theta)$ and $\zeta_k(\theta)= \zeta_1(2^{1-k}\theta)$ for $k\ge 1$, we define 
\[ A^k_t f(x) = \int 
\chi_1(x,t,y')\int_{\theta\in \bbR^{m+1}}\zeta_k(\theta) 
e^{i\psi(x,t,y,\theta)} \tfrac{d\theta}{(2\pi)^{m+1}}\, f(y) dy  \]
and let  
\[ M^k f(x)=  \sup_{t\in [1,2]} |A^k_t f(x)|.\] The basic estimates for $M^k$ are summarized in the following proposition.
\begin{prop}\label{prop:Akmax} Assume \eqref{eq:smallness-Lambda} holds.

(i) For $1\le p\le \infty$, 
\Be\label{eqn:maxLpest}
\| M^k f\|_p \lc  2^{\frac{k}p} 2^{-k(d-m-1)\min(\frac1p,\frac1{p'})}\|f\|_p.\Ee
(ii) For $2\le q\le \infty$,
\Be \label{eqn:maxantidiagest}
\|M^k f\|_{L^q(\R^d)} \lesssim 2^{k (m+1-\frac{d+m}{q})} \|f\|_{q'}.\Ee
(iii) For $q\ge q_5:=\tfrac{2(d+1)}{d-1}$,
\Be\label{eqn:maxL2qest}
\| M^k f \|_{L^{q}(\R^d)} \lc 
2^{-k (\frac{d}{q} -\frac{m+1}{2}) }\|f\|_2.\Ee
\end{prop}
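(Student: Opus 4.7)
The three estimates share a common strategy: first establish a space-time bound for the linear family $\{A^k_t\}_{t \in [1,2]}$, then pass to the supremum via the one-variable Sobolev-type inequality
\[ \sup_{t \in [1,2]} |F(t)|^r \leq \|F\|_{L^r([1,2])}^r + r\|F\|_{L^r([1,2])}^{r-1}\|F'\|_{L^r([1,2])}, \]
applied pointwise in $x$ and integrated. This yields
\[ \|M^k f\|_{L^r_x}^r \lesssim \|A^k_\cdot f\|_{L^r_{t,x}}^r + \|A^k_\cdot f\|_{L^r_{t,x}}^{r-1} \|\partial_t A^k_\cdot f\|_{L^r_{t,x}}. \]
Since $\partial_t e^{i\psi} = i \psi_t e^{i\psi}$ with $|\psi_t| \lesssim 2^k$ on the support of $\zeta_k(\theta)$, the operator $\partial_t A^k_t$ is of the same form as $A^k_t$ up to a factor of $2^k$, so the maximal bound will cost exactly $2^{k/r}$ over the underlying linear space-time bound.

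For (i), I would interpolate the trivial fixed-$t$ bound $\|A^k_t f\|_{L^\infty} \lesssim \|f\|_{L^\infty}$ (the kernel $K^k(x,t,\cdot)$ is a frequency-localized delta on a surface of bounded total mass) with the fixed-$t$ $L^2$ estimate $\|A^k_t f\|_{L^2} \lesssim 2^{-k(d-m-1)/2}\|f\|_{L^2}$. The latter follows by repeated integration by parts in $\theta$ once one verifies that the map $(x_{2n}, \bar x) \mapsto \Phi(x,t,y)=(\bar\fs(x,t,y')-\bar y,\,\fs^{2n}(x,t,y')-y_{2n})$ is a local diffeomorphism onto $\mathbb R^{m+1}$, providing the $d - m - 1 = 2n - 1$ genuinely nondegenerate directions. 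Real interpolation gives $\|A^k_t\|_{L^p \to L^p} \lesssim 2^{-k(d-m-1)\min(1/p, 1/p')}$, and the Sobolev step with $r = p$ supplies the additional factor $2^{k/p}$.

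For (ii), the case $q = \infty$ is immediate from $\|K^k(x,t,\cdot)\|_{L^\infty} \lesssim 2^{k(m+1)}$ (volume of the $\theta$-support), which yields $\|M^k f\|_{L^\infty} \lesssim 2^{k(m+1)}\|f\|_{L^1}$. The case $q = 2$ is obtained by combining the Sobolev argument at $r = 2$ with the fixed-$t$ $L^2$ estimate from (i), producing $\|M^k f\|_{L^2} \lesssim 2^{k(m+2-d)/2}\|f\|_{L^2}$. Linearizing the supremum — writing $M^k f(x) \lesssim |A^k_{t(x)}f(x)|$ for a near-extremal measurable selection $t(x)$ — and applying Riesz--Thorin interpolation to the resulting family of linear operators produces the anti-diagonal bound with the stated exponent $m+1 - (d+m)/q$.

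For (iii), the crucial ingredient is an $L^2 \to L^q$ space-time estimate for $A^k_\cdot$ viewed as a Fourier integral operator from $\mathbb R^d$ to $\mathbb R^{d+1}$ of frequency $\sim 2^k$ with phase $\psi(x,t,y,\theta)$. I would appeal to the variant of Stein's oscillatory integral theorem formulated in \cite{MSS93}, which applies whenever the canonical relation carries a cone with the maximal number of nonvanishing principal curvatures. This yields
\[ \|A^k_\cdot f\|_{L^q(I \times \mathbb R^d)} \lesssim 2^{-k((d+1)/q - (m+1)/2)}\|f\|_{L^2}, \qquad q \ge \tfrac{2(d+1)}{d-1}, \]
after which the Sobolev lemma with $r = q$ gives \eqref{eqn:maxL2qest}. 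The principal obstacle is the verification of the cone curvature hypothesis for our specific phase; this is where the smallness assumption \eqref{eq:smallness-Lambda} is essential, since the relevant quadratic form read off from the mixed $(y',\theta)$-Hessian of $\psi$ is essentially controlled by $J^{\bar\theta}$ modulo $\Lambda^{\bar\theta}$ corrections, and one needs uniform nondegeneracy for all unit $\theta$. Once this is in place, the geometry of the cone — the number of nonvanishing curvatures — can be read off directly from the defining equations \eqref{eq:defining equations-2} and matches the required value $d - 1$.
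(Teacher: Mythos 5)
Your overall architecture (space-time bounds plus the one-dimensional Sobolev inequality, with $\partial_t A^k_t$ behaving like $2^k A^k_t$) is the paper's, and your treatment of (iii) — the MSS93/Stein cone-curvature theorem applied to the space-time phase, with \eqref{eq:smallness-Lambda} entering in the verification of the $d-1$ nonvanishing curvatures — is essentially correct. But there is a genuine gap in (i) and (ii): the fixed-time estimate $\|A^k_t f\|_{L^2}\lesssim 2^{-k(d-m-1)/2}\|f\|_2$ that you use as the second interpolation endpoint is \emph{false}. The fixed-time operator is a generalized Radon transform whose rotational curvature $\sigma$ in \eqref{eq:sigma} vanishes on a hypersurface, producing a two-sided fold; the sharp fixed-time bound (M\"uller--Seeger via Cuccagna) is only $2^{-k(\frac{d-m-1}{2}-\frac16)}$, and your proposed justification (integration by parts in $\theta$ after checking that $(x_{2n},\bar x)\mapsto(\fs^{2n},\bar\fs)$ is a local diffeomorphism) does not address the relevant nondegeneracy at all — what is needed is full rank of the mixed Hessian $\Phi''_{xy}$, which degenerates exactly where $\sigma=0$. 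Feeding the weaker (true) fixed-time bound into your interpolation would lose a factor $2^{k/3}\cdot{}^{\min(1/p,1/p')}$ and fail to reach \eqref{eqn:maxLpest}.

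The repair is the point of the paper's argument: replace the fixed-time $L^2$ endpoint by the genuine space-time estimate $\|\cA^k f\|_{L^2(\bbR^d\times[1,2])}\lesssim 2^{-k(d-m-1)/2}\|f\|_2$ (i.e. \eqref{eqn:sptimeL2estimate}), proved by H\"ormander's theorem for the phase $\Phi(x,t,y)$: the $(d+1)\times d$ mixed Hessian $\Phi''_{(x,t),y}$ has rank $d$ everywhere, because when $\sigma=0$ the $t$-derivative row restores the rank (this is where Lemma \ref{lem:skew-sym} and, for the full rank statement near $\sigma=0$, the hypothesis \eqref{eq:smallness-Lambda} are used). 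Interpolating this space-time $L^2$ bound with the trivial $L^1$, $L^\infty$ and $L^1\to L^\infty$ bounds gives the space-time versions of (i) and (ii), and your Sobolev step (at exponent $p$, resp.\ $q$) then yields \eqref{eqn:maxLpest} and \eqref{eqn:maxantidiagest} exactly as you computed; note that after the Sobolev step the numerology of your (false) fixed-time route coincidentally agrees with the correct one, which is why your final exponents are right even though the intermediate claim is not. Your linearization-plus-Riesz--Thorin step in (ii) is then unnecessary: one interpolates the space-time operator first and takes the supremum last.
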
 

\subsection{\it Proof of Theorem \ref{thm:main}, given Proposition \ref{prop:Akmax}} \label{sec:interpolpf}It suffices to show the required bounds for $\cM f(x):= \sum_{k\ge 0} M^k f$. 

We note that for $n\geq 2, m\geq 1$ (so $d\geq 5$) and $q_5:=\frac{2(d+1)}{d-1}$ we have $\tfrac{d}{q_5} -\tfrac{m+1}{2}>0$ and $m+1-\tfrac{d+m}{2}<0$.
To deduce the required restricted weak type estimates for $\mathcal{M}$ at $Q_2, Q_3, Q_4$
we recall the Bourgain  interpolation argument (\cite{Bourgain-CompteRendu1986}, \cite{CarberySeegerWaingerWright1999}):
Suppose we are given sublinear operators $T_k$ so that for $k\ge 1$,
\[ \|T_k\|_{L^{p_0,1}\to L^{q_0,\infty}} \lesssim 2^{k a_0}\;\text{and}\; \|T_k\|_{L^{p_1,1}\to L^{q_1,\infty}} \lesssim 2^{-k a_1}\]
for some $p_0, q_0, p_1, q_1\in [1,\infty], a_0, a_1>0$. Then the operator $\sum_{k\ge 1} T_k$ is of restricted weak type $(p,q)$, where 
\[ (\tfrac1p, \tfrac1q, 0) = (1-\vartheta) (\tfrac1{p_0},\tfrac1{q_0}, a_0) + \vartheta(\tfrac1{p_1},\tfrac1{q_1}, -a_1) \]
and $\vartheta=\tfrac{a_0}{a_0+a_1}\in (0,1)$. 

The restricted weak type estimate for $\mathcal{M}$ at $Q_2=(\tfrac{d-m-1}{d-m}, \tfrac{d-m-1}{d-m})$ now follows from
\eqref{eqn:maxLpest}. 
Similarly, the restricted weak type bound at $Q_3=(\tfrac{d-1}{d+m},\tfrac{m+1}{d+m})$ follows from \eqref{eqn:maxantidiagest}.
Finally, the restricted weak type bound at $Q_4=(\tfrac1{p_4},\tfrac1{q_4})$ with \[\tfrac1{p_4}=\tfrac{d(d-1)}{d(d-1)+(d+1)(m+1)},\,\tfrac1{q_4}=\tfrac{(m+1)(d-1)}{d(d-1)+(d+1)(m+1)}\]
follows from interpolating \eqref{eqn:maxL2qest} for $q=q_5$ with
the case $q=\infty$ of \eqref{eqn:maxantidiagest}, 
since for $n=d-m\ge 2$
\[(\tfrac{1}{p_4},\tfrac{1}{q_4},0)=\vartheta(\tfrac{1}{2},\tfrac1{q_5},-\tfrac{d}{q_5} +\tfrac{m+1}{2})+(1-\vartheta)(1,0,m+1)\]
with  $\vartheta=\tfrac{2(d+1)(m+1)}{d(d-1)+(d+1)(m+1)}\in (0,1)$. 
Since bounds for $\mathcal{M}$ imply bounds for $M$, 
this concludes the proof of part (i) of Theorem \ref{thm:main}. Part (ii) is immediate by interpolation.

\subsection{\it Reduction to space-time bounds}
We use  
the  standard   Sobolev inequality 
\Be \label {eq:Sobolev1D} \sup_{t\in[1,2] } |F(t)| \lc \|F\|_{p} + \|F\|_p^{1/p'} \|F'\|_{p}^{1/p}, \Ee where the $L^p$ norms are taken on $[1,2]$, see  \cite[p.499]{Stein-harmonic}. We apply it to $F(t)= A^k_tf(x)\equiv A^k f(x,t) $,  integrate in $x$ and then use H\"older's inequality to obtain\detail{the first summand on the right hand side is usually given by something like $|F(t_0)|$ but averaging the $pth$ powers in $t_0$ in $t_0$ yields the stated inequality.} Proposition \ref{prop:Akmax}  as  a consequence of the following
 \begin{prop} \label{prop:space-time-est} Assume \eqref{eq:smallness-Lambda} holds.
 
 (i) For $1\le p\le \infty$
 \Be \label{eqn:sptimeLpest}
\| {A}^k \|_{L^p(\bbR^d)\to L^p(\R^d\times [1,2])} \lesssim 2^{-k (d-m-1)\min(\frac1p,\frac1{p'})} 
\Ee
(ii) For $2\le q\le \infty$, 
\Be \label{eqn:sptimeantidiagest}
\|{A}^k f\|_{ L^{q'}(\bbR^d) \to L^q(\R^d\times [1,2])} \lesssim 2^{k (m+1-\frac{d+m+1}{q})} .
\Ee(iii) For $q\ge \frac{2(d+1)}{d-1}$,
\Be \label{eqn:Q5spacetime}
\| A^k \|_{L^2(\bbR^d)\to L^{q}(\R^d\times [1,2])} \lc 
2^{-k (\frac{d+1}{q} -\frac{m+1}{2}) }\|f\|_2. 
\Ee
(iv) The same estimates hold for  $2^{-k} \frac{d}{dt} A^k$ in place of $A^k$.
\end{prop}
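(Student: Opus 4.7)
The plan is to realize each $A^k$ as a Fourier integral operator dyadically localized at $|\theta|\approx 2^k$, with $m+1$ phase variables $\theta$, and to reduce the four estimates to standard oscillatory-integral bounds that will be established in \S\ref{sec:oscint}--\S\ref{sec:SToscest}. Two trivial kernel bounds will serve as building blocks: the pointwise estimate $|K(x,t,y)|\lesssim 2^{k(m+1)}$, from the size of the $\theta$-integral on $\supp\zeta_k$; and the averaged bounds $\sup_{(x,t)}\int|K(x,t,y)|\,dy\lesssim 1$ and $\sup_{y}\int|K(x,t,y)|\,dx\,dt\lesssim 1$, obtained from non-stationary-phase integration by parts in $\theta$ (using $\nabla_\theta\psi=(\fs^{2n}-y_{2n},\bar\fs-\bar y)$) together with the non-degeneracy of the Jacobian of the map $(\bar x, t)\mapsto(\fs^{2n},\bar\fs)$, which follows from \eqref{eq:Psidefc} and remains uniform under \eqref{eq:smallness-Lambda}.

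These averaged bounds immediately yield the $p\in\{1,\infty\}$ endpoints of (i). The central input for $p=2$ is the Fourier-integral-operator estimate $\|A^k_t\|_{L^2\to L^2}\lesssim 2^{-k(d-m-1)/2}$, uniform in $t\in[1,2]$; I expect to obtain this in \S\ref{sec:oscint} by a $TT^*$ argument and stationary phase in the $(d-m-1)$ tangential directions, using that the averaging surface $\{y_{2n}=\fs^{2n},\bar y=\bar\fs\}$ carries $d-m-1=2n-1$ non-vanishing principal curvatures by \eqref{eq:Psidefc}, perturbed by terms controlled by \eqref{eq:smallness-Lambda}. Integration in $t$ then gives (i) at $p=2$, and Riesz-Thorin interpolation among $p=1,2,\infty$ completes (i). For (ii), interpolating the $L^1\to L^\infty$ bound $\lesssim 2^{k(m+1)}$ with the $L^2\to L^2$ case of (i) produces exactly the exponent $m+1-(d+m+1)/q$.

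The decisive estimate is (iii). Its sharp endpoint at $q=q_5=2(d+1)/(d-1)$ is the objective of \S\ref{sec:SToscest}, where I would apply the MSS variant of Stein's oscillatory-integral theorem \cite{SteinBeijing,MSS93}: viewing $A^k$ as an FIO from $\R^d$ to $\R^{d+1}$, the canonical relation projects to a conic submanifold of $T^*\R^{d+1}\setminus 0$ whose fibre above each base point $(x,t)$ must form a cone carrying the maximal number $d-1$ of non-vanishing principal curvatures. The range $q>q_5$ then follows by interpolating this endpoint with the Cauchy-Schwarz bound $\|A^k\|_{L^2\to L^\infty}\lesssim 2^{k(m+1)/2}$, which in turn follows from Plancherel in $(y_{2n},\bar y)$ applied to $\|K(x,t,\cdot)\|_{L^2_y}$. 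Statement (iv) follows with no new estimates: the amplitude $2^{-k}\partial_t(\chi_1\zeta_k e^{i\psi})$ decomposes into an $O(2^{-k})$ correction of the same form as that of $A^k$, plus a term with factor $2^{-k}\partial_t\psi$ which is $O(1)$ on $\supp\zeta_k$ since $\partial_t\psi$ is homogeneous of degree one in $\theta$.

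The main obstacle is the verification of the cone-curvature condition required by the MSS theorem for the $q=q_5$ bound in (iii). This will require a direct computation, starting from \eqref{eq:Psidef}--\eqref{eq:Psidefc}, of the second fundamental form of the fibre cones in $T^*\R^{d+1}\setminus 0$; the smallness hypothesis \eqref{eq:smallness-Lambda} is expected to enter critically here, keeping the relevant curvatures bounded away from zero uniformly in $(x,t)$.
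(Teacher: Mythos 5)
Your treatment of parts (iii) and (iv), and your interpolation scheme deducing (i) and (ii) from the trivial kernel bounds together with an $L^2$ bound carrying the gain $2^{-k(d-m-1)/2}$, agree with the paper. The gap is in your central input for $p=2$: you assert the \emph{fixed-time} estimate $\|A^k_t\|_{L^2\to L^2}\lesssim 2^{-k(d-m-1)/2}$, justified by $TT^*$ and stationary phase using the $d-m-1$ nonvanishing principal curvatures of the sphere. That estimate is not available, and the justification is the wrong criterion: for a generalized Radon transform the fixed-time $L^2$ gain is governed not by the curvature of the incidence surface but by the rotational curvature, i.e.\ the nondegeneracy of the mixed Hessian $\Phi''_{xy}$. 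Here, because of the twist $\ux^\intercal J\uy$ in the group convolution, $\det\Phi''_{xy}$ vanishes exactly on the set $\{\sigma=0\}$ with $\sigma$ as in \eqref{eq:sigma}; this is a two-sided fold, and the best fixed-time bound (from \cite{Cuccagna1997}, as used in \cite{MuellerSeeger2004}) is $\|\cA^k_t\|_{L^2\to L^2}\lesssim 2^{-k(\frac{d-m-1}{2}-\frac16)}$. Integrating that in $t$ would carry the extra factor $2^{k/6}$ through your interpolation and ruin the exponents in \eqref{eqn:sptimeLpest} and \eqref{eqn:sptimeantidiagest}; the resulting quadrilateral would be strictly smaller than $\cR$.

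The correct route — and the main point of the paper — is to prove the space-time bound \eqref{eqn:sptimeL2estimate} directly rather than via fixed-time estimates. After Plancherel in $(y_{2n},\bar y)$ one reduces $\cA^k$ to an oscillatory integral operator $T_k$ sending functions of $y\in\bbR^d$ to functions of $(x,t)\in\bbR^{d+1}$, and one must verify that the full $(d+1)\times d$ mixed Hessian has rank $d$ everywhere, i.e.\ condition \eqref{eq:rank-condition} of Proposition \ref{prop:L2osc-spt}. On the fold set $\sigma=0$ it is precisely the $t$-derivative row of $\Xi$ that restores the rank, and the smallness hypothesis \eqref{eq:smallness-Lambda} is needed in that verification — not only in the cone-curvature computation for (iii), as your last paragraph suggests. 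H\"ormander's nondegenerate $L^2$ theorem (after a slicing/partition argument) then gives $\|T_k\|_{L^2\to L^2}\lesssim 2^{-kd/2}$ and hence \eqref{eqn:sptimeL2estimate} with the full gain. With that substitution, your deductions of (i), (ii) and (iv) go through as outlined.
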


For later calculations it will be convenient to introduce the  nonlinear shear transformation in the $x$-variables (smoothly depending on $t$)
\begin{align*} 
\underline \fx(x,t)&=\underline x, 
\\
\overline \fx_i (x,t)&=\ox_i 
-t\La_i\ux-x_{2n}({\underline x}^\intercal J_ie_{2n}-t\La_ie_{2n}).
\end{align*}
By a change of variables it suffices to prove the above space-time  inequalities for 
$A^k_t f(\fx(x,t),t) $ and 
$2^{-k} \frac{d}{dt} A^k_t f(\fx(x,t),t) $.
Using the homogeneity we see that both terms are linear combinations of expressions of the form  
\begin{equation}\label{eq:CalAk}  \mathcal{A}^k f(x,t) = 2^{k(m+1)} \int_{\bbR^d}\int_{\R^{m+1}} e^{i2^k \Psi(x,t,y,\theta)} b(x,t,y',\theta) f(y)\, d\theta\,dy 
\end{equation}
where 
the symbol $b$ is compactly supported in $\bbR^{d}\times\bbR\times \bbR^{d-m-1}\times \bbR^{m+1}$ with $y'$ near zero and $\ux$ near $e_{2n}$ on the support of $b$ and $|\theta|\in [1/2,2] $. The phase function $\Psi$ is given by \begin{equation}
\label{phasedefn}
\Psi(x,t,y,\theta)=\theta_{2n}(S^{2n}(x,t,y')-y_{2n}) +\sum_{i=1}^m  \bar\theta_i (\bar S_i (x,t,y')    -\bar y_i)
\end{equation}
with $(S^{2n}, \bar S)|_{(x,t,y')}= (\fs^{2n}, \bar\fs)|_{ (\fx(x,t),t,y')}$, that is 
\Be\label{eq:PhaseS}\begin{aligned}
S^{2n}(x,t,y')&= x_{2n}- tg(\ttf) 
\\
\bar S_i(x,t,y')&=x_{2n+i} +
(\ux^\intercal J_i -t\La_i) (P^\intercal y' -tg(\ttf) 
 e_{2n}),
\end{aligned}\Ee
with  $g$ is as in \eqref{eq:Psidefc}.
The Schwartz kernel of $\cA^k$ is given by 
\Be\label{eq:K_k}
\cK^k(x,t,y) = 
\int_{\bbR^{m+1}}e^{i2^k\Psi(x,t,y,\theta)} b(x,t,y',\theta) 
 d\theta \Ee and integration by parts yields the 
estimate
\Be\label{eq:Kkptw} 
|\cK^k(x,t,y)|\le C_N \frac{ 2^{k(m+1)} } {(1+2^k|y_{2n} - S^{2n} ( x, t,y')|+2^k |\bar y-\bar S (x,t,y')| )^N}.
\Ee
This estimate (together with the specific expressions for $S^{2n}$, $\bar S$) yields 
 for all $k\ge 1$   the bounds
\begin{gather}\label{eqn:trivestimate}
\|\cA_t^k\|_{L^1\to L^1} + \|\cA_t^k \|_{L^\infty\to L^\infty}  \lesssim 1,
\\ \label{eqn:trivL1infestimate}
\|\cA_t^k\|_{L^1\to L^\infty}  \lesssim 2^{k(m+1)}.
\end{gather}
In view of these estimates it suffices in what follows to consider the case of large $k$.
The bounds \eqref{eqn:sptimeLpest}, \eqref{eqn:sptimeantidiagest} then follow by an interpolation argument using 
\eqref{eqn:trivestimate}, \eqref{eqn:trivL1infestimate} and the local $L^2$ space-time estimate
\begin{equation}\label{eqn:sptimeL2estimate}
\| \cA^k f\|_{L^2(\R^d\times [1,2])} \lesssim 
2^{-k \frac{d-m-1}{2} }
\|f\|_2.
\end{equation}
This gives a gain over the  estimate 
$\| \cA^k_t\|_{L^2 \to L^2} \lesssim 
2^{-k (\frac{d-m-1}{2} -\frac 16) }$ for fixed time $t$  established in \cite{MuellerSeeger2004} via estimates for oscillatory integrals with fold singularities in  \cite{Cuccagna1997}. As mentioned before, the papers 
\cite{NarayananThangavelu2004} and \cite{KimJoonil} work with similar space-time estimates.

To prove \eqref {eqn:sptimeL2estimate} we use an oscillatory integral operator 
\[T_k f(x,t) = \int_{\R^d} e^{i 2^k \Phi(x,t,y)} b(x,t,y) f(y) dy\]
where $b\in C^\infty_c(\R^d\times\R\times\R^d)$ is as in \eqref{eq:CalAk}, and
\Be\label{eq:Phi-phase} \Phi(x,t,y)=
y_{2n}S^{2n}(x,t,y') +\sum_{i=1}^m  \bar y_i \bar S_i (x,t,y') .
\end{equation}
Setting $F_k(y)= \int_{\bbR^{m+1}} f(y',w_{2n}, \bar w) e^{-i2^k(y_{2n}w_{2n} +  \bar y\cdot \bar w)} dw_{2n} d\bar w$ we have 
\[ \cA^k f(x,t)= T_k F_k (x,t)\] 
and by Plancherel's theorem $\|F_k\|_2=(2^{-k}2\pi)^{(m+1)/2}\|f\|_2$.  Hence \eqref {eqn:sptimeL2estimate}
follows from 
\begin{prop} \label{prop:L2osc-spt} Assume \eqref{eq:smallness-Lambda} holds. For all $f\in L^2(\bbR^d)$, 
\Be\label{Tkest} 
\|T_k f\|_{ L^2(\bbR^d\times[1,2])} \lc 2^{-k\frac d2} \|f\|_2
\Ee
\end{prop}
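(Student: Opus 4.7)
The plan is to invoke H\"ormander's classical $L^2$ estimate for oscillatory integral operators: if $T_\lambda f(z)=\int e^{i\lambda\Phi(z,y)}b(z,y)f(y)\,dy$ with $z\in\bbR^N$, $y\in\bbR^M$, $N\ge M$, $b$ smooth with compact support, and if the mixed Hessian $\Phi''_{zy}$ has maximal rank $M$ on $\supp b$, then $\|T_\lambda\|_{L^2\to L^2}\lesssim\lambda^{-M/2}$. In our setting $z=(x,t)\in\bbR^{d+1}$, $y\in\bbR^d$, and $\lambda=2^k$, so \eqref{Tkest} reduces to showing that the $(d+1)\times d$ matrix $\Phi''_{(x,t),y}$ has full column rank $d=2n+m$ on $\supp b$.

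I would exploit the block structure of $\Phi''_{(x,t),y}$. Indexing rows by $(\ux,\ox,t)$ and columns by $(y',y_{2n},\oy)$, observe that $\ox$ enters $\Phi$ only through the linear term $\sum_i \bar y_i x_{2n+i}$ in $\bar S_i$; hence the $(\ox,\oy)$ block is the identity $I_m$ and the other $\ox$-blocks vanish. Row-reducing with this identity block zeroes the $\oy$-columns in the $(\ux,t)$-rows without affecting rank, and the problem reduces to showing that the $(2n+1)\times 2n$ submatrix built from $(\ux,t)$-rows and $(y',y_{2n})$-columns has full column rank $2n$.

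Next I would perform the rank computation at the base point $y'=0$, $x'=0$, $\ux$ near $e_{2n}$, using $g(0)=1$, $\nabla g(0)=0$, $g''(0)=-I_{2n-1}$. Writing $\eta:=\oy$, $v_i:=\ux^\intercal J_i-t\Lambda_i$, and the ``twist parameter'' $\tau:=y_{2n}+\sum_i \bar y_i v_{i,2n}$, a direct computation gives the $2n\times 2n$ top block
\[
A=\begin{pmatrix} -\tau I_{2n-1}+K & 0 \\ (J^\eta)_{2n,\,[1{:}2n-1]} & 1\end{pmatrix},\qquad K=PJ^\eta P^\intercal,
\]
while the $t$-row in these columns is $c=\bigl(-\Lambda^\eta_{[1{:}2n-1]},\,-1\bigr)$. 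Since $K$ is skew-symmetric, $\det(-\tau I+K)=-\tau\prod_{k=1}^{n-1}(\tau^2+\mu_k^2)$, so $A$ is invertible and the rank is automatically full whenever $\tau\ne 0$. In the degenerate case $\tau=0$ (which forces $\eta\ne 0$ since $|(y_{2n},\oy)|\ge 1/2$ on $\supp b$), any null vector $(\alpha,\beta)$ of $\binom{A}{c}$ satisfies $K\alpha=0$; using that $J^\eta$ is invertible by M\'etivier nondegeneracy and that $(J^\eta)^{-1}$ is skew-symmetric so $((J^\eta)^{-1})_{2n,2n}=0$, one identifies $\ker K$ with the line spanned by $P(J^\eta)^{-1}e_{2n}$. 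Substituting into the $x_{2n}$- and $t$-row equations and using $J^\eta(J^\eta)^{-1}e_{2n}=e_{2n}$ to collapse the $x_{2n}$-row to $c+\beta=0$, the nondegeneracy reduces to $1-\Lambda^\eta\cdot(J^\eta)^{-1}e_{2n}\ne 0$. This follows from the smallness hypothesis \eqref{eq:smallness-Lambda}: by homogeneity in $\eta$, $|\Lambda^\eta\cdot(J^\eta)^{-1}e_{2n}|\le\|\Lambda^\eta\|\,\|(J^\eta)^{-1}\|<1$.

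Once full rank is verified at the base point for every admissible $(y_{2n},\oy)$, continuity and compactness propagate the condition to a neighborhood covering $\supp b$ (after a partition of unity), and H\"ormander's theorem yields \eqref{Tkest}. I expect the main obstacle to be precisely the algebraic analysis in the degenerate regime $\tau=0$: this is the only place where the obvious $2n\times 2n$ block $A$ is singular, and recovering full rank genuinely requires both the M\'etivier nondegeneracy (to pin down $\ker K$) and the smallness condition on $\Lambda$ (to rule out a coincidence in the $t$-row)---without the latter, rank can drop along the locus $\tau=0$ for suitable $\eta$.
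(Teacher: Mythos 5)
Your proposal is correct and follows essentially the same route as the paper: reduction to H\"ormander's $L^2$ theorem via a slicing/partition-of-unity argument, then verification of the maximal rank of $\Phi''_{(x,t),y}$ at $x'=y'$, with the case split on $\sigma=\tau$ handled by skew-symmetry of $PJ^{\oy}P^\intercal$ when $\tau\neq 0$ and by the $t$-row together with \eqref{eq:smallness-Lambda} when $\tau=0$. The paper organizes the degenerate case slightly differently (producing an invertible $(2n+m)\times(2n+m)$ minor by omitting a suitable coordinate $\kappa$), but your explicit identification of $\ker(PJ^{\oy}P^\intercal)$ as the span of $P(J^{\oy})^{-1}e_{2n}$ and the resulting condition $1-\Lambda^{\oy}(J^{\oy})^{-1}e_{2n}\neq 0$ is an equivalent and correct way to use the two hypotheses.
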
 
The proof will be given in \S\ref{sec:oscint}  using the standard H\"ormander  $L^2$ estimate (\cite{Hormander1973}). By the same argument, the $L^2\to L^q$ bound  \eqref{eqn:Q5spacetime} is reduced to the estimate
\begin{prop}
\label{prop:Q5spacetimeTk}
Assume that \eqref{eq:smallness-Lambda} holds. Then for  $q\ge q_5=\tfrac{2(d+1)}{d-1}$ and $f\in L^2(\bbR^d)$, 
\begin{equation}\label{eqn:Q5spacetimeTk}
\| T_k f\|_{L^{q}(\R^d\times [1,2])} \lc 
2^{-k \frac{d+1}{q}  }\|f\|_2.
\end{equation}
\end{prop}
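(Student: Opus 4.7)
The plan is to recognize $T_k$ as an oscillatory integral operator fitting the Carleson--Sj\"olin--Stein framework (with large parameter $\lambda=2^k$, output variables $(x,t)\in\bbR^{d+1}$, and input $y\in\bbR^d$) and to apply the variable-coefficient theorem of Stein \cite{SteinBeijing} in the form given by Mockenhaupt--Seeger--Sogge \cite{MSS93}. That theorem yields $\|T_\lambda\|_{L^2\to L^q}\lesssim\lambda^{-(d+1)/q}$ for $q\ge\tfrac{2(d+1)}{d-1}$ whenever the phase $\Phi$ satisfies (i) the Carleson--Sj\"olin nondegeneracy condition $\mathrm{rank}\,\Phi_{(x,t),y}=d$ and (ii) the cone condition: the image of $y\mapsto\nabla_{(x,t)}\Phi(x_0,t_0,y)$ is a $d$-dimensional conic subset of $\bbR^{d+1}$ whose projection to $S^d$ has the maximal number $d-1$ of nonvanishing principal curvatures. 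Since the target bound \eqref{eqn:Q5spacetimeTk} is precisely this conclusion for $q\ge q_5$, it suffices to verify the two hypotheses.

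Condition (i) is the same nondegeneracy already established in the proof of Proposition \ref{prop:L2osc-spt}; it is a consequence of the M\'etivier nondegeneracy of $J^\theta$ and the smallness hypothesis \eqref{eq:smallness-Lambda}. For (ii), the linearity of $\Phi$ in $(y_{2n},\bar y)$ means that
\[
\Sigma_{(x_0,t_0)}=\Bigl\{y_{2n}\,\nabla_{(x,t)}S^{2n}(x_0,t_0,y')+\sum_{i=1}^m \bar y_i\,\nabla_{(x,t)}\bar S_i(x_0,t_0,y'):y\in\mathrm{supp}_y b\Bigr\}
\]
is a genuine cone in $\bbR^{d+1}$, whose base (after radial normalization) is parameterized by $(y',(y_{2n},\bar y))\in\bbR^{2n-1}\times S^m$, of the expected dimension $d-1$. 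Two geometric mechanisms produce the curvatures: variation of $y'$ exploits $g''(0)=-I_{2n-1}$ (the Euclidean sphere curvature, which is the same mechanism responsible for the codimension-one estimate in \cite{MuellerSeeger2004}) and contributes $2n-1$ principal curvatures; variation of $(y_{2n},\bar y)$ along $S^m$ traces out a round $m$-sphere inside the $(m+1)$-dimensional span of $\nabla_{(x,t)}S^{2n}$ and the $\nabla_{(x,t)}\bar S_i$ (linearly independent by \eqref{eq:smallness-Lambda}), contributing the remaining $m$.

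The main obstacle is to verify that these two families combine into a \emph{single} nondegenerate second fundamental form of full rank $d-1$ on $\Sigma_{(x_0,t_0)}$, rather than merely being nondegenerate block by block. Concretely, one selects a unit normal to the projection of $\Sigma_{(x_0,t_0)}$ in $S^d$ and computes the Hessian of the associated height function in the coordinates $(y',(y_{2n},\bar y))\in\bbR^{2n-1}\times S^m$. Using the explicit formulas \eqref{eq:PhaseS} together with \eqref{eq:Psidefc}, the diagonal $y'$-block is controlled by the negative-definite $g''(0)=-I_{2n-1}$, and the diagonal $S^m$-block by the Gram structure of the linearly independent vectors $\nabla_{(x,t)}S^{2n},\nabla_{(x,t)}\bar S_i$; the remaining work is to show that the off-diagonal block coupling the two parameter groups cannot conspire to cancel these contributions, and this is where \eqref{eq:smallness-Lambda} enters in an essential way to keep the relevant determinant bounded away from zero. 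Once this determinantal nondegeneracy has been verified, Stein--MSS applies and yields \eqref{eqn:Q5spacetimeTk}.
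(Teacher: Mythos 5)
Your overall strategy is exactly the paper's: reduce \eqref{eqn:Q5spacetimeTk} to the Stein/Mockenhaupt--Seeger--Sogge oscillatory integral theorem by verifying the rank condition \eqref{eq:rank-condition} (already done for Proposition \ref{prop:L2osc-spt}) together with the cone condition that $y\mapsto\Xi(x,t,y)=\nabla_{x,t}\Phi$ has $d-1$ nonvanishing principal curvatures. However, the proposal stops short of the actual verification, and the geometric mechanism you sketch for it is partly wrong, so as written there is a genuine gap.

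First, the phase $\Phi$ is \emph{linear} in $(y_{2n},\bar y)$, so all second derivatives $\Xi_{y_jy_k}$ with $2n\le j,k\le 2n+m$ vanish identically. Consequently the ``diagonal $S^m$-block'' of the curvature matrix is the zero matrix; there is no ``round $m$-sphere'' and no Gram-matrix positivity to appeal to. For fixed $y'$ the map $(y_{2n},\bar y)\mapsto y_{2n}\nabla_{x,t}S^{2n}+\sum_i\bar y_i\nabla_{x,t}\bar S_i$ parametrizes a flat $(m+1)$-plane. The $m$ additional curvatures come entirely from the \emph{off-diagonal} mixed block $\inn{N}{\Xi_{y_jy_{2n}}}$, $\inn{N}{\Xi_{y_jy_{2n+i}}}$ (the twisting of that plane as $y'$ varies), and proving that this $(2n-1)\times(m+1)$ block has full rank $m+1$ on its extended columns is one of the two places where \eqref{eq:smallness-Lambda} enters (via the invertibility of $-t^{-1}\om_{2n}I-J^{\bar\om}$ from Lemma \ref{lem:skew-sym}). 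So the issue is not that the off-diagonal block might ``conspire to cancel'' the diagonal contributions --- it is that the off-diagonal block must itself carry the missing rank. Second, the $y'$-diagonal block is not nondegenerate merely because $g''(0)=-I_{2n-1}$: at $x'=y'$ it equals $cI_{2n-1}$ with $c$ as in \eqref{cdefn}, a scalar built from the normal vector $N$ (which has a nontrivial $t$-component $\alpha_{d+1}$), from $\sigma$, and from $\La^{\bar y}$; showing $c\ne 0$ requires combining the normal equations \eqref{normal1}--\eqref{normal2} into \eqref{c-nonzero} and invoking Lemma \ref{lem:skew-sym} together with \eqref{eq:smallness-Lambda} to get the lower bound \eqref{c-lowerbound}. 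Once both facts are in place, the block factorization \eqref{eq:factorization} gives $\rank\,\mathscr C^N=(2n-1)+\rank(PA)=d-1$, which is the content of the paper's argument and is what your proposal still needs to supply.
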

This will be proved in \S \ref{sec:SToscest} using a result in \cite{MSS93}.

\detail{: {\color{blue} Not necessary anymore:}
It is advantagous to work with amplitudes in product form. Choose a  $C^\infty$ functions  $\chi_3$ on $\bbR^d\times \bbR$, $\chi_4$ on $\bbR^{2n-1}$ such that $\chi_3(x,t)\chi_4(y')=1$ on the support of $\chi_1$; we may assume that $\chi_4$ is supported near $0$ and that $|\ubar x|-t$ and $x'$ are small on the support of $\chi_3$.
By a Fourier series expansion of $\chi_1$ we may then express $\chi_1 $ as $\sum_{\ell_L\in \bbZ^{d+1} }\sum_{\ell_R\in \bbZ^{2n-1} } c(\ell_L, \ell_R)  e^{2\pi i \ell_L \cdot (x,t)} \chi_1(x,t)\chi_4(y') e^{2\pi i y\cdot \ell_R}$  with rapidly decaying coefficients.
By using Minkowski's inequality for the $\ell$-sums we see that for the sake of Lebesgue space estimate we can work with $\chi_3\otimes \chi_4$ in place of $\chi_1$.}

\section{Proof of Proposition \ref{prop:L2osc-spt}}
\label{sec:oscint} 
 By H\"ormander's classical $L^2$ bound
 (\cite[ch. IX.1]{Stein-harmonic}) applied after a partition of unity and a slicing argument  with a suitable subset of $d$ of the $(x,t)$-variables, it suffices to prove that the rank of the $(d+1)\times d$ mixed Hessian matrix $\Phi_{(x,t),y}''$ is equal to $d$.  Equivalently, for  
\Be \label{eq:Xidef}
 \Xi(x,t,y):=\nabla_{x,t} \Phi (x,t,y)=
 y_{2n} \nabla_{x,t}S^{2n} + \sum_{i=1}^m \oy_{i}  \nabla_{x,t}{\overline{S}}_{i}, 
 \Ee  we need to check that (using subscripts to denote partial derivatives) 
 \Be\label{eq:rank-condition} 
 \rank \, (\Xi_{y_1}, \dots,  \Xi_{y_d} )=d
 \Ee for
every $(x,t,y)\in \supp (b)$; in particular,  
  $|\oy|\approx 1$, and $x'-y'$ is small. 
  
  Recall that   $P$ denotes  the $(2n-1)\times2n$ matrix $P=(I_{2n-1}\,\, 0)$. 
  \detail{Then $PJ_i$ is a $(2n-1)\times2n$ matrix with its rows identical to the the first $2n-1$ rows of $J_i$. }
We calculate 
 \begin{multline*}\Xi(x,t,y)=
 y_{2n} \begin{pmatrix}
 -\nabla g(\ttf) \\1\\ \vec{0}_m\\ h(\ttf)
 \end{pmatrix}
 \\
 \,+\,\sum_{i=1}^m\overline{y}_i\begin{pmatrix}
  PJ_iP^\intercal y'  
  -tg(\ttf) PJ_ie_{2n}
  -(\ux^{\intercal} J_i e_{2n}-t\La_ie_{2n}) \nabla g(\ttf)
  \\ e_{2n}^\intercal J_i (P^\intercal y'-tg(\ttf) e_{2n})
  \\e_i^m
\\ 
h(\ttf)(\ux^{\intercal}J_i -t\La_i)e_{2n} -\La_i(P^\intercal y'-tg(\ttf) e_{2n})
 \end{pmatrix}
 \end{multline*}
 where $e_i^m$ denotes the $i$-th standard basis vector in $\mathbb{R}^m$ and \[ h(x')=\inn{x'}{\nabla g(x')}-g(x'),\]
 with 
 \Be\label{hderiv} h(0)=-1, \nabla h(0)=0, h''(0)= -I_{2n-1}.\Ee

The non-degeneracy assumption on $J$  implies that
$y_{2n}I_{2n}+ J_{\overline y}$ is invertible whenever 
$(y_{2n},\overline{y})\not=0$. More precisely, we have
the following auxiliary lemma for its operator norm (taken with respect to the standard Euclidean norm in $\bbR^{2n}$); this is a  quantitative extension of a lemma in \cite{MuellerSeeger2004}.

\begin{lem} \label{lem:skew-sym} Let $B$ be a real  skew-symmetric $N\times N $ matrix and let $I_{N}$ be the $N\times N$ identity matrix.

(i) Suppose $N$ is even. Then $\rho I_{N} +B$ is invertible if and only if either $\rho\neq 0$ or $B$ is invertible. Moreover, for the Euclidean operator norm of the inverse, 
\Be
\|(\rho I_{N} +B)^{-1}\|= \begin{cases} |\rho|^{-1} &\text{ if $\det B=0$} \\
(\rho^2+\|B^{-1}\|^{-2})^{-1/2} &\text{ if $\det B\neq 0$ }\end{cases}
\Ee

(ii)  Suppose that $N$ is odd. Then 
 $\rho I_{N} +B$ is invertible if and only if  $\rho\neq 0$ and we have
 $\|(\rho I_N+ B )^{-1} \|=|\rho |^{-1} $.  Moreover 
 $\det (\rho I_N+ B) = c(\rho,B) \rho  $ where $c$ depends smoothly on $\rho,B$  and  $c(\rho,B)\neq 0$
 if $\rank \, B=N-1$.
\end{lem}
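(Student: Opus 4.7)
The plan is to apply the real normal form for skew-symmetric matrices to reduce every claim to an elementary block-by-block computation.

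First, recall that any real $N\times N$ skew-symmetric matrix $B$ is orthogonally similar to a block diagonal matrix whose nonzero blocks are $2\times 2$ of the form
\begin{equation*}
\begin{pmatrix} 0 & \lambda_j \\ -\lambda_j & 0 \end{pmatrix}, \qquad \lambda_j\neq 0,
\end{equation*}
together with some number of $1\times 1$ zero blocks. Let $U\in O(N)$ denote the orthogonalizing matrix. Under the conjugation $B\mapsto U^\intercal B U$, the matrix $\rho I_N+B$ becomes block diagonal with $2\times 2$ blocks having determinant $\rho^2+\lambda_j^2$ and operator norm $\sqrt{\rho^2+\lambda_j^2}$, together with scalar blocks equal to $\rho$. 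Since the Euclidean operator norm is orthogonally invariant, it suffices to work with these blocks. The singular values of $B$ are precisely the $|\lambda_j|$ (each with multiplicity two) along with the zeros coming from the scalar blocks, so $\|B^{-1}\|^{-1}=\min_j|\lambda_j|$ whenever $\det B\neq 0$.

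For part (i), $N$ even, scalar zero blocks in the normal form occur if and only if $\det B=0$. When $\det B=0$ and $\rho\neq 0$, the inverse norm is dominated by the scalar blocks, giving $\|(\rho I_N+B)^{-1}\|=|\rho|^{-1}$; when $\det B\neq 0$, every block is of $2\times 2$ type and the maximum is attained on the block realizing $\min_j|\lambda_j|=\|B^{-1}\|^{-1}$, yielding $(\rho^2+\|B^{-1}\|^{-2})^{-1/2}$. The invertibility dichotomy is immediate: either $\rho\neq 0$ (every block invertible) or $\det B\neq 0$ (no zero blocks present). For part (ii), $N$ odd, at least one scalar zero block is forced for dimensional reasons, hence invertibility requires $\rho\neq 0$, and the scalar blocks dominate, producing $\|(\rho I_N+B)^{-1}\|=|\rho|^{-1}$.

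For the final claim in (ii), note that for odd $N$ skew-symmetry forces $\det B=\det(-B)=-\det B$, so $\det B\equiv 0$ identically in $B$. Writing
\begin{equation*}
\det(\rho I_N+B)=\rho^N+e_1(B)\rho^{N-1}+\cdots+e_{N-1}(B)\rho+\det B,
\end{equation*}
with the $e_k$ polynomial in the entries of $B$, we obtain $\det(\rho I_N+B)=\rho\cdot c(\rho,B)$ for a polynomial, hence smooth, function $c$ on $\bbR\times\{B\colon B^\intercal=-B\}$. When $\rank B=N-1$, exactly one scalar zero block appears and all $\lambda_j$ are nonzero, so the block-form computation gives $c(\rho,B)=\prod_j(\rho^2+\lambda_j^2)$, which is nonzero for every real $\rho$. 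The main subtlety is the joint smoothness of $c$ in $(\rho,B)$: the orthogonalizing matrix $U$ does not depend smoothly on $B$ at points where eigenvalues of $B$ coalesce, so smoothness cannot simply be lifted from the block normal form. The resolution is that skew-symmetry in odd dimension forces $\det B\equiv 0$ as a polynomial identity in the entries of $B$, so $\rho$ divides $\det(\rho I_N+B)$ in the polynomial ring $\bbR[\rho,B_{ij}]$ and directly furnishes the required polynomial function $c$.
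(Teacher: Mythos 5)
Your proof is correct and takes essentially the same route as the paper's: both reduce to the real orthogonal normal form of a skew-symmetric matrix (the paper builds the adapted orthonormal basis $u_1,\dots,u_N$ from the complex eigenvectors, you invoke the block-diagonal form directly) and then compute the inverse's operator norm and the determinant block by block. Your justification that $c(\rho,B)=\det(\rho I_N+B)/\rho$ is polynomial, via the identical vanishing of $\det B$ in odd dimension, is the same observation the paper makes by noting $f(0,B)=0$.
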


\begin{proof} We first consider the case $N=2n$. 
When acting on $\bbC^{2n}$ the skew symmetric matrix $B$ has an orthonormal basis of eigenvectors, with purely imaginary eigenvalues. If $v$ is a complex eigenvector with eigenvalue $i\beta$, then $\bar v$ is an eigenvector with eigenvalue $-i\beta$, moreover 
 $B(\Re v)= -\beta  \Im v$      and $B(\Im v) = \beta \Re v.$ There is then  an orthonormal basis 
$u_1,\dots, u_{2n}$ of $\bbR^{2n}$   such that $Bu_{2k-1}= -\beta_k u_{2k}$   and $Bu_{2k}= \beta_k u_{2k-1}$.
 Also,  $(\rho I\pm B)u_{2k-1} =\rho u_{2k-1}\mp \beta_k u_{2k}$ and $(\rho I\pm B)u_{2k}=\beta_k u_{2k-1}\pm \rho u_{2k}$. Thus $\rho I\pm B$ are  invertible if and only if either $\rho\neq 0$ or $\min_k|\beta_k|\neq 0$.

We have  $((\rho I +B)^{-1} )^\intercal (\rho I+B)^{-1}= (\rho^2 I- B^2)^{-1} $,  by the skew-symmetry of $B$.
 Observe that  $\rho^2 I-B^2$ acts on 
 $\bbV_k:=\mathrm{span} \{u_{2k-1}, u_{2k}\}$ as $(\rho^2+\beta_k^2) I_{\bbV_k} $ and it follows that \begin{align*} \|(\rho I+B)^{-1}\|= \|((\rho I+B)^{-1})^\intercal (\rho I+B)^{-1} \|^{1/2} &
 \\=
 \max_{k=1,\dots, n}  (\rho^2+\beta_k^2)^{-1/2}= (\rho^2+ \min_{k=1,\dots,n} \beta_k^2)^{-1}.&\end{align*}
Since $\|B^{-1}\|^{-1}=  \min_{k} |\beta_k| $ we obtain the claimed expression for the operator norm.

Next consider the case $N=2n-1$, $n\ge 2$ (the case $N=1$ is trivial). The proof uses the same argument as above. We can now find an orthonormal bases $u_1,\dots, u_{2n-1}$ such that $Bu_{2k-1}= -\beta_k u_{2k}$   and $Bu_{2k}= \beta_k u_{2k-1}$ for $k=1,\dots, n-1$, and $Bu_{2n-1}=0$. 
Let $f(\rho, B) = \det (\rho I+B)$ then $f(0,B)=0$ since $N$ is odd, moreover $c(\rho,B)=f(\rho,B)/\rho$ is a polynomial in $\rho$ and the entries of $B$. 
By the above computation  $f(\rho,B)= \rho\prod_{k=1}^{n-1} (\rho^2+\beta_k)^2$. If  the rank of $B$ is $N-1$ then the $\beta_k$ are nonzero and thus $c(\rho,B)\neq 0$. \end{proof}

We proceed to check \eqref{eq:rank-condition}. Recall the notation   $J^{\overline{y}} = \sum_{i=1}^m \overline{y}_i J_i$ and $\La^{\overline y}=\sum_{i=1}^m \overline y_i \La_i.$
  We compute, for $j=1,\dots, 2n-1$, the partial derivatives (using  
 $e_{2n}^\intercal J_{\overline{y}} e_{2n}
=0$),
\[
 \Xi_{y_j}=\begin{pmatrix}
 t^{-1}(y_{2n}+(\ux^{\intercal}J^{\overline{y}}-t\La^{\overline y})e_{2n})\partial_j\nabla g(\ttf)+PJ^{\overline{y}}( e_j+\partial_jg(\ttf) e_{2n}) \\e_{2n}^\intercal J^{\overline{y}} (e_j +\partial_j g(\ttf)e_{2n})
 \\
 \vec{0}_m
 \\ -t^{-1}(y_{2n}+ (\ux^{\intercal}J^{\overline{y}}-t\La^{\overline y})e_{2n})\partial_jh(\ttf)
 -\La^{\oy}(e_j+\partial_jg(\ttf) e_{2n}) 
 \end{pmatrix},\]
  \[
\Xi_{y_{2n}}=\begin{pmatrix}
 -\nabla g(\ttf)\\1\\\vec{0}_m\\h(\ttf)
 \end{pmatrix},\] and, with $\ybar_i\equiv y_{2n+i}$,
 \[\Xi_{{y}_{2n+i}}=
 \begin{pmatrix}
  PJ_iP^\intercal y'  
  -tg(\ttf) PJ_ie_{2n}
  -(\ux^{\intercal} J_i e_{2n}-t\La_ie_{2n}) \nabla g(\ttf)
    \\ e_{2n}^\intercal J_i (P^\intercal y'-tg(\ttf) e_{2n})
  \\e_i^m
\\ 
h(\ttf)(\ux^{\intercal}J_i -t\La_i)e_{2n} -\La_i(P^\intercal y'-tg(\ttf) e_{2n})

 \end{pmatrix}.
\]
Let $\varPi : \bbR^{2n+m+1} \to \bbR^{2n+m}$ be the natural projection omitting the time variable $t=x_{2n+m+1}$.
Let  \Be\label{eq:sigma}\sigma\equiv\sigma(x,t,y)= y_{2n} +(\ux^\intercal J^{\oy}-t\La^{\oy})e_{2n}
\Ee 
and let $B=B(x,t,y)$ be the $(2n-1)\times(2n-1)$ matrix \[B=PJ^{\oy} e_{2n} (\nabla g(\ttf))^\intercal \]
with rank at most one.
We have 
\[
\varPi \Xi_y 
= \begin{pmatrix} 
t^{-1}\sigma g''(\ttf) + PJ^{\overline y} P^\intercal +B
&-\nabla g(\ttf)&*
\\
e_{2n}^\intercal J^{\overline y} P^\intercal&1&*
\\
0&0&I_m
\end{pmatrix} 
\]
and therefore (using elementary column operations and the skew symmetry of $J_{\oy}$) 
\begin{align} \label{eq:PiXiy} \det \varPi \Xi_y &=\det 
\begin{pmatrix}
t^{-1}\sigma g''(\ttf) + PJ^{\overline y} P^\intercal +B-B^\intercal
\end{pmatrix}\,.
\end{align}
Since  $PJ^{\overline y} P^\intercal +B-B^\intercal$ is a skew-symmetric $(2n-1)\times (2n-1)$ matrix, we see from Lemma \ref{lem:skew-sym} that  $\varPi \Xi_y$ is invertible if and only if  $\sigma\neq 0$. 
Equivalently $\det \Phi_{xy}''\neq 0$ if and only if $\sigma\neq 0$.  If $\sigma=0$, then 
$\varPi \Xi_y$ is not invertible and we have to use the $t$-derivatives.

\begin{rem} \label{kernel-cokernel-rem} For later reference in \S\ref{sec:proofoftheoremaverage} we include the following remarks which  establish the oscillatory integral operator $f\mapsto T_k f(\cdot,t)$ as an operator with a folding canonical relation (i.e.  two-sided fold singularities).  
We examine the  one-dimensional kernel and cokernel of the matrix in \eqref{eq:PiXiy}  for $x'=y'$, $\sigma=0$. 

(i) Consider  $b=(b',b_{2n}, \bar b)^\intercal $ in the kernel.  Then $\bar b=0$, $b_{2n} = -e_{2n}^\intercal J^{\bar y}  P^\intercal b'$ and  $PJ^{\bar y} P^\intercal b'=0$ with $b'\neq 0$. This also implies that $e_{2n}^\intercal J^{\bar y} P^\intercal b'\neq 0$ (since otherwise $P^\intercal b'$ would be in the kernel of the invertible matrix $J^{\bar y}$ and $b'$ would be zero).  
Let $V_L = \sum_{j=1}^{2n-1} b_j \partial/\partial{y_j} + b_{2n} \partial/\partial{y_{2n} }$ with $b_{2n} = - e_{2n}^\intercal J^{\bar y}  P^\intercal b'\neq 0$, then
 $\tfrac{\partial\sigma}{\partial y_{2n}} =1$ and from part (ii) of Lemma \ref{lem:skew-sym} we get 
 $V_L(\det \varPi\Xi_y) \neq 0$.
 
 (ii) Let $a^\intercal=(a_1,\dots, a_{2n+m})$ be in the cokernel of $\varPi\Xi_y$. 
 Then a right kernel vector field $V_R = \sum_{j=1}^{2n+m} a_j \partial/\partial{x_j} $ satisfies $a_{2n}=0$ (when evaluated at $x'=y'$, $\sigma=0$), $PJ^{\bar y} P^\intercal a'=0$ with $a'\neq 0$ and $\bar a$ is determined by $a'$. Note that
 $V_R \sigma =- e_{2n}^\intercal J^{\bar y} P^\intercal a'\neq 0$ which leads to 
 $V_R (\det \varPi\Xi_y) \neq 0 $.
 
\end{rem}
\subsection*{The case of small $\sigma$} 
 We consider $\bar y$ in an $\eps$-neighborhood of $\bar y_\circ \neq 0$, with small $\eps>0$. 
 We look at the $(2n+m+1)\times (2n+m)$ matrix $\Xi_{y}$ for $x'=y'$ and small $\sigma$. Since $g''(0)=-I_{2n-1}$ and $h(0)=-1$, we have
  \[
 \Xi_y \Big|_{x'=y'} 
= \begin{pmatrix} 
 -t^{-1}\sigma   I_{2n-1} + J^{\overline y} P^\intercal
&0&*
\\
e_{2n}^\intercal J^{\overline y} P^\intercal&1&*
\\
0&0&I_m
\\
-\La^{\bar y}P^\intercal &-1 &*
\end{pmatrix} 
\]
 We recall that $J^{\bar y} $ is invertible for $\bar y\neq 0$ (and take $\bar y$ near $\bar y_\circ\neq 0$).
This implies that for $\widetilde \La^{\bar y}= (\La^{\bar y} P^\intercal, 0)$
the $2n\times 2n$ matrix with rows $e_1^\intercal J^{\bar y}$, \dots,  $e_{2n-1}^\intercal J^{\bar y}$, $e_{2n}^\intercal J^{\bar y} -\widetilde \La^{\bar y} $
is invertible.  To see this, let $\{c_k\}_{k=1}^{2n}$ be such that 
$\sum_{k=1}^{2n-1} c_k e_k^\intercal J^{\bar y} + c_{2n} (e_{2n}^\intercal J^{\bar y} -\widetilde \La^{\bar y} )=0$. 
This gives 
 $\sum_{k=1}^{2n} c_k e_k^\intercal = c_{2n} \widetilde \La^{\bar y} (J^{\bar y})^{-1} $
and thus $\sum_{k=1}^{2n-1} c_k^2+ c_{2n}^2(1- \|\widetilde \La^{\bar y} (J^{\bar y})^{-1}\|^2)=0 $. By \eqref{eq:smallness-Lambda}, setting    $\theta=\bar y/|\bar y|$,
\[\|\widetilde \La^{\bar y} (J^{\bar y})^{-1}\| \le \|\widetilde \La^{\bar \theta} \| \|(J^{\bar \theta})^{-1} \| \le  \| \La^{\bar \theta} \| \|(J^{\bar \theta})^{-1} \| < 1\]  which implies that the $c_k$ are all zero.

The preceding consideration also yields that $2n-1$ of the truncated rows
$e_1^\intercal J^{\bar y}P^\intercal $, \dots, $e_{2n-1}^\intercal J^{\bar y}P^\intercal$, $e_{2n}^\intercal J^{\bar y}P^\intercal  -\La^{\bar y} P^\intercal $ are linearly independent. 
For $\kappa \in\{1,\dots, 2n-1\}$,  we let
$P^{(\ka)} :\bbR^{2n-1} \to \bbR^{2n-2}$  denote the map  that omits the   
   $\kappa^{\mathrm{th}}$ coordinate.  We also let   $\varPi^{(\ka)}$ be the corresponding linear map from $\bbR^{2n+m+1}$ to  $\bbR^{2n+m} $  that omits the $\ka^{\mathrm{th}}$ coordinate.
  
Since the skew symmetric matrix $PJ^{\bar y}P^\intercal$ is not invertible we see that there is a $\kappa \in \{1,\dots, 2n-1\}$ (depending on $\bar y$)  such that the
$(2n-1)\times (2n-1)$ matrix 
\[
 \begin{pmatrix} P^{(\kappa)}P J^{\bar y} P^\intercal  
\\
(e_{2n}^\intercal J^{\bar y} -\La^{\bar y})  P^\intercal  
\end{pmatrix} \]
is invertible. By elementary row operations this implies that 
\[
 \varPi^{(\ka)}  \Xi_y \Big|_{x'=y'} 
= \begin{pmatrix} 
 -t^{-1}\sigma  P^{(\ka)} PI_{2n} +P^{(\ka)} PJ^{\overline y} P^\intercal
&0&*
\\
e_{2n}^\intercal J^{\overline y} P^\intercal&1&*
\\
0&0&I_m
\\
-\La^{\bar y}P^\intercal &-1 &*
\end{pmatrix} 
\]
is invertible for $\sigma=0$.  The above calculations  for $\bar y=\bar y_\circ$, $\sigma=0$ and $x'=y'$ extend by continuity to small choices of 
$|\sigma|$,  $|x'-y'|$ and $|\bar y-\bar y_\circ|$, and for these we obtain that  $\varPi^{(\ka)}  \Xi_y$ is invertible.
This concludes the verification of \eqref{eq:rank-condition} and thus the proof of Proposition \ref{prop:L2osc-spt}. \qed

\begin{comment}

Observe that  $\rank \,J^{\overline y}P^\intercal= 2n-1$ and  ........
since $B|_{x'=y'}=0$ we see that there is $\kappa\in \{1,\dots, 2n-1\}$ (depending on $\bar y$) such that 
for small $x'-y'$  
the $(2n-1)\times(2n-1)$ matrix 
which  is obtained from 
 $J^{\oy} P^\intercal +B-B^\intercal  $  by omitting the $\ka$-th row is invertible. Let $P^{(\ka)} $ be the projection  
 $P^{(\ka)}$ from $\bbR^{2n-1}$ to the orthogonal complement of $ e_\ka$ in $\bbR^{2n-1} $ and let 
  $\varPi^{(\ka)}$ be the projection of $\bbR^{2n+m+1}$ to the orthogonal complement of $\bbR e_\ka$ in $\bbR^{2n+m+1} $.
  
  Then 
   \[
 \varPi^{(\ka)}  \Xi_y \Big|_{x'=y'} 
= \begin{pmatrix} 
 t^{-1}\sigma  P^{(\ka)} I_{2n-1} +P^{(\ka)} J^{\overline y} P^\intercal
&0&*
\\
e_{2n}^\intercal J^{\overline y} P^\intercal&1&*
\\
0&0&I_m
\\
{\color{blue}\La^{\bar y}P^\intercal} &-1 &*
\end{pmatrix} 
\]
and the  $(2n-1)\times (2n-1)$ matrix 
 \[\begin{pmatrix} 
 t^{-1}\sigma  P^{(\ka)} I_{2n-1} +P^{(\ka)} J^{\overline y} P^\intercal 
\\
e_{2n}^\intercal J^{\overline y} P^\intercal
\end{pmatrix}
\]
is invertible if $\sigma$ is sufficiently small. Since $h(0)\neq 0$ we see that 
\[\det \varPi^{(\ka)}  \Xi_y \Big|_{x'=y'} \neq 0\]  if $\sigma$ is sufficiently small. 
\detail{Thus  $\varPi^{(\ka)}  \Xi_y$ is invertible if $x'-y'$ is sufficiently small.}
\end{comment} 

\section{Proof of Proposition \ref{prop:Q5spacetimeTk}} \label{sec:SToscest}
Let $\Xi=\nabla_{x,t}\Phi$ as in \eqref{eq:Xidef},  $N\in\R^{d+1}$ be a unit  vector, and let $\mathscr C^N(x,t,y)$ be the $d\times d$ curvature matrix with respect to $N$ given by
\Be\label{curvmatrix}  \mathscr C_{jl}^N = \frac{\partial^2}{\partial y_j\partial y_l} \inn{ N}{\Xi}  \Ee
We apply an oscillatory integral result  in \cite{MSS93} according to which Proposition \ref{prop:Q5spacetimeTk} holds provided that \eqref{eq:rank-condition}  and  the additional  curvature condition
\Be \label{eq:rank-curv}\inn {N}{ \Xi_{y_j}  }=0 ,\,\, j=1,\dots, d \quad \implies \quad \rank\, \mathscr C^N = d-1\Ee 
is satisfied; i.e. the conic surface  $\Sigma_{x,t}$ parametrized by $y\mapsto \Xi(x,t,y)$  has the maximal number $d-1$ of nonvanishing principal curvatures. 
It remains to verify \eqref{eq:rank-curv}; here we shall use our size assumption \eqref{eq:smallness-Lambda} on $\La$.

Let  $\sigma$ be as in \eqref{eq:sigma}. 
For $x'=y'$, 
using the properties of $g,h$ in 
\eqref{eq:Psidefc}, \eqref{hderiv} we get 
\begin{align*}
 \Xi_{y_j}
 \Big|_{x'=y'}
 &=-t^{-1}\sigma e_j+\begin{pmatrix}
 J^{\overline{y}}e_j\\\vec{0}_m\\ -\La^{\oy}e_j
 \end{pmatrix},\\
\Xi_{y_{2n}} \Big|_{x'=y'}&=\begin{pmatrix}
 \vec{0}_{2n-1}\\1\\\vec{0}_m\\-1
 \end{pmatrix},\,\, \Xi_{{y}_{2n+i}} \Big|_{x'=y'}=\begin{pmatrix}
  PJ_iP^\intercal y' -t PJ_ie_{2n}\\
  e_{2n}^\intercal J_i P^\intercal y'\\e_i^m\\ (t\La_i-x^\intercal J_i)e_{2n}-\La_i(P^\intercal y'-te_{2n})
 \end{pmatrix}.
\end{align*}

We now consider a unit vector \[N\Big|_{x'=y'}= (\underline{\alpha},\overline{\alpha},\alpha_{d+1})^\intercal= (\alpha',\alpha_{2n},\overline{\alpha},\alpha_{d+1})^\intercal\in \mathbb{R}^{d+1}\] perpendicular to 
 $\Xi_{y_i}$, $\Xi_{y_{2n}} $, $\Xi_{\overline{y}_i}$. 
Evaluating  for  $x'=y'$, we get
 \begin{subequations}
\begin{align}  \label{normal1}
 0&=\inn {N}{\Xi_{y_j} }\Big|_{x'=y'}=-t^{-1}\sigma
 \alpha_j+\underline{\alpha}^{\intercal}J^{\bar{y}}e_j-\alpha_{d+1} \La^{\oy} e_j, \quad j\le 2n-1.
 \\
 \label{normal2}  0&=\inn{N} {\Xi_{y_{2n} }}\Big|_{x'=y'}=\alpha_{2n}-\alpha_{d+1},
\\
\label{normal3}
0&=\inn {N} {\Xi_{\overline{y}_i} }\Big|_{x'=y'}=\, 
\alpha'^{\intercal}(PJ_iP^\intercal y' -t  PJ_i e_{2n})
+\alpha_{2n} e_{2n}^\intercal J_i P^\intercal y' +\overline{\alpha}_i \\ \notag
&\qquad\qquad \qquad +\alpha_{d+1}((t\La_i-x^\intercal J_i)e_{2n} -\La_i(P^\intercal y'-te_{2n})), \,\, i=1,\dots,m.
\end{align}
\end{subequations}
Equation \eqref{normal3} above expresses $\overline{\alpha}_i$ in terms of $\underline{\alpha}$ and $\alpha_{d+1}$ and turns out to be  not really  relevant to our calculations. Normalizing $|N|=1$ we have $|\underline\alpha|\approx 1$. 

\begin{remarka} It is instructive to see that when  $\Lambda=0$ and for the special case of the Heisenberg type group, i.e. when $(J^{\oy})^2=-|\oy|^2 I$,    the projection of the normal vector $N$ to $\bbR^{2n}$  is  tangential to the sphere  for $\sigma=0$, indeed in that case (as we evaluate at the northpole of the sphere with normal vector $e_{2n}$) we see  from  \eqref{normal1} 
 that $\ubar\alpha$ is perpendicular to $\mathrm{span}\{J^{\bar{y}} e_1,\dots J^{\bar y} e_{2n-1}\}$ which contains $e_{2n}$. 
\end{remarka}

The  second derivative vectors are given by
\[
 \Xi_{y_jy_k}=\begin{pmatrix}
 -t^{-2}\sigma\partial_{jk}\nabla g(\ttf)-t^{-1}(PJ^{\overline{y}}e_{2n})\partial_{jk}^2g(\ttf) \\0\\\vec{0}_m\\ t^{-2}\sigma\partial_{jk}^2h(\ttf)+t^{-1}\La^{\oy}e_{2n}\partial_{jk} g(\ttf) 
 \end{pmatrix},\] for $1\le j,k\le  2n-1$,  
 and
\[ \Xi_{y_{j}y_{k}} = 0, \text{ if } 2n\le j, k\le 2n+m.
\]
Moreover,
 \[
\Xi_{y_jy_{2n}}=\begin{pmatrix}
 t^{-1}\partial_j\nabla g(\ttf)\\0\\\vec{0}_m\\-t^{-1}\partial_jh(\ttf)
 \end{pmatrix},\quad 1\le j\le 2n-1,\] and, for $i=1,\dots, m$ and  $j=1,\dots, 2n-1$, 
 \[\Xi_{y_j{y}_{2n+i}}=\begin{pmatrix}
  PJ_ie_j+ PJ_ie_{2n}\partial_jg(\ttf)+t^{-1}(\ux^{\intercal} J_i -t\La_i)e_{2n}\partial_j\nabla g(\ttf) \\ e_{2n}^{\intercal}J_i e_j\\\vec{0}\\ 
  - t^{-1}(\ux^{\intercal}J_i-t\La_i)  e_{2n}\partial_j h(\ttf) -\La_i(e_j+\partial_j g(\ttf) e_{2n}) 
 \end{pmatrix}, 
\] 
\detail{
For $x'=y'$ we get (using  
$g''(0)=h''(0)=-I_{2n-1}$, $g'''(0)=0$)
 for  $1\le j,k\le 2n-1$, 
\begin{align*}
 \Xi_{y_jy_j} \Big|_{x'=y'}&=\begin{pmatrix}
 t^{-1}PJ_{\overline{y}}e_{2n}\ \\0\\\vec{0}_m\\ -t^{-2}
 \sigma- t^{-1}\La_{\oy} e_{2n}
 \end{pmatrix},\\ \Xi_{y_j y_k}\Big|_{x'=y'} &=0 \text{ if } j\neq k, \qquad 
\Xi_{y_jy_{2n}} \Big|_{x'=y'}=
 -t^{-1}e_j,\\ 
 \Xi_{y_j{y}_{2n+i}} \Big|_{x'=y'}&=\begin{pmatrix}
  J_ie_j \\\vec{0}_{m}\\-\La_ie_j
 \end{pmatrix}-t^{-1}((\ux^{\intercal} J_i -t\La_i)e_{2n})e_j 
\end{align*}
and
\[ \Xi_{y_{2n}y_{2n}} = \Xi_{\overline{y}_i\overline{y}_i'}=\Xi_{y_{2n}\overline{y}_i}  =\vec 0 \text{ when $x'=y'$.}\] 
}
%%END DETAIL
We evaluate at $x'=y'$,  using $g''(0)=h''(0)=-I_{2n-1}$, $g'''(0)=0$, and  see that the  components 
of the curvature matrix $\mathscr C^N$ at $x'=y'$ 
are given by
\begin{align*}
\inn{N}{\Xi_{y_jy_j}}\Big|_{x'=y'}&=(\alpha')^\intercal t^{-1}PJ^{\bar{y}}e_{2n}  -\alpha_{d+1}t^{-2}\sigma
+\alpha_{d+1} (-t\La^{\oy}e_{2n}),\\
&=(\ubar \alpha)^\intercal t^{-1}J^{\bar{y}}e_{2n}  -\alpha_{d+1}(t^{-2}\sigma
+ t^{-1}\La^{\oy}e_{2n}),
\\
\inn{N}{\Xi_{y_jy_k}}\Big|_{x'=y'}&=0,\quad \text {if } j\neq k,  
\end{align*} 
for $1\le j,k\le 2n-1$. Moreover  for $1\le j\le 2n-1$,  
\begin{align*}
\inn{N}{\Xi_{y_jy_{2n}}}\Big|_{x'=y'}&=-\alpha_j t^{-1},\\
\inn{N}{\Xi_{y_j{y}_{2n+i}}}\Big|_{x'=y'}&=\ubar{\alpha}^{\intercal} J_i e_j- \alpha_j t^{-1}((\ux^{\intercal}J_i-t\La_i)e_{2n})-\alpha_{d+1}\La_i e_j, \, 1\le i\le m,
\end{align*}
and
\[ \inn{N}{\Xi_{y_{j}y_{k}}}\Big|_{x'=y'}= 0, \quad 2n\le j,k\le d
=0.\]

Thus we get for the $d\times d $ curvature matrix $\mathscr C^N$,
\[\mathscr C^N\Big|_{x'=y'}=\begin{pmatrix}
cI_{2n-1} &PA
\\
A^{\intercal}P^\intercal& 0
\end{pmatrix}
\]
where $c=c(t,x,y)$ is given by
\begin{equation}
\label{cdefn}
c=t^{-1}\ubar \alpha^\intercal J^{\oy}e_{2n} -t^{-2}\alpha_{d+1} \sigma
-t^{-1}\alpha_{d+1}\La^{\oy}e_{2n}
\end{equation}
and $A^\intercal P^\intercal$ is the $(m+1)\times (2n-1)$ matrix obtained from the following $(m+1)\times 2n $ matrix $A^\intercal $ by deleting the last column; 
here
\[A^\intercal=\begin{pmatrix}
-t^{-1}(\ubar \alpha)^\intercal
\\
\ubar {\alpha}^{\intercal} J_1 -t^{-1}((\ux^{\intercal}J_1-t\La_1)e_{2n})\ubar  \alpha^{\intercal}-\alpha_{d+1}\Lambda_1  
\\
\vdots
\\
\ubar {\alpha}^{\intercal} J_m -t^{-1}((\ux^{\intercal}J_m-t\La_m) e_{2n})\ubar \alpha^{\intercal}-\alpha_{d+1} \Lambda_m 
\end{pmatrix}.\]
We combine \eqref{cdefn}, \eqref{normal1} and \eqref{normal2} to get
\Be\label{c-nonzero}
(-t^{-1}\sigma I+J^{\oy} )\ubar\alpha  -\alpha_{2n}(\La^{\oy})^\intercal 
= ct e_{2n}.\Ee
Therefore, by Lemma \ref{lem:skew-sym}, and writing $\vth=\oy/|\oy|$, 
\begin{align*}
 |c| &= t^{-1} \|(\tfrac{\sigma}{ t} I-J^{\bar y} )\ubar \alpha +\alpha_{2n}\La^{\oy}\|
 =
  t^{-1} |\bar y| \|(\tfrac{\sigma }{|\oy| t} I-J^{\bar \vth })\ubar \alpha +\alpha_{2n}\La^{\bar\vth}\|
 \\ \notag 
&\ge t^{-1}|\bar y| |\ubar \alpha|\big(
 (\tfrac{\sigma^2 }{|\oy|^2 t^2}+\|(J^{\bar \vth })^{-1}\|^{-2})^{1/2} - \|\La^{\bar\vth}\|
 \big)
 \end{align*}
 and thus \Be \label{c-lowerbound} |c| \ge t^{-1}|\bar y||\ubar \alpha|(\|(J^{\bar \vth })^{-1}\|^{-1} - \|\La^{\bar\vth}\|) \Ee which is bounded away from zero by assumption \eqref{eq:smallness-Lambda}.
 
We finish by verifying that $\mathscr C^N$ has rank $d-1=2n+m-1$.
We have the factorization
\begin{multline}\label{eq:factorization}
\begin{pmatrix} c I_{2n-1} &PA\\ 0_{m+1,2n-1}& -c^{-1} A^\intercal P^\intercal P A 
\end{pmatrix}=\\
\begin{pmatrix} I_{2n-1} &0_{2n-1, m+1} 
\\- c^{-1} A^\intercal P^\intercal &I_{m+1} \end{pmatrix} 
\begin{pmatrix} cI_{2n-1} &PA\\ A^\intercal P^\intercal & 0_{m+1} 
\end{pmatrix} 
\end{multline}
where 
$PA$ is an $(2n-1)\times (m+1)$ matrix, 
$I_{2n-1}$ is the $(2n-1)\times (2n-1) $ identity matrix,
$I_{m+1}$ is the $(m+1)\times (m+1) $ identity  matrix,
$0_{m+1}$ is the $(m+1)\times (m+1) $ zero   matrix
and
$0_{2n-1,m+1}$ is the $(2n-1)\times (m+1) $ zero   matrix.

Thus, the rank of the curvature matrix
 is $2n-1+ \rank(PA)$ and the rank of  $PA$ the same as the rank of the $(2n-1)\times (m+1)$ matrix 
\begin{equation}
\label{curv submatrix}
\begin{pmatrix}
-t^{-1}P \ubar\alpha & P\ubar v^{(1)}&\dots&P\ubar v^{(m)} 
\end{pmatrix} \text{ with $\ubar v^{(k)} 
= J_k^{\intercal} \ubar{\alpha}-\La_k^\intercal\alpha_{d+1}.$}
\end{equation} 
We observe that the extended columns
$-t^{-1} \ubar \alpha$, $\ubar v^{(1)}$, ...., $\ubar  v^{(m)} $ are linearly independent vectors in $\bbR^{2n}$.
To see  this  let  $(w_{2n},\bar {w})\in\bbR^{1+m}$ be such that
$-t^{-1} \ubar \alpha w_{2n} + \sum_{k=1}^m \ubar v^{(k)}w_k=0$. 
This is equivalent with 
$ -t^{-1} \ubar \alpha w_{2n} +
\sum_{k=1}^m J_k^\intercal \ubar \alpha w_k= \sum_{k=1}^m \La_k^\intercal w_k$.
If 
$\bar  w=0$ then we must also have  $w_{2n}=0$ since $\ubar\alpha\neq 0$. We thus need to show that $\bar   w\neq 0$ leads to a contradiction.  Let $\bar \om= \bar w/\|\bar  w\|$.
Since $\alpha_{d+1} =\alpha_{2n}$ we get  
$\ubar \alpha=\alpha_{2n}(-t^{-1} \om_{2n} I -J^{\overline \om } )^{-1} (\La^{\overline \om})^\intercal$ and thus by Lemma \ref{lem:skew-sym} 
\[|\ubar \alpha| 
\le (\tfrac{\om_{2n}^2}{t^2} + \|(J^{\overline \om})^{-1}\|^{-2})^{-1/2}\| \La^{\overline  \om} \| |\alpha_{2n}|
\le \|(J^{\overline\om} )^{-1}\|
\| \La^{\overline  \om} \|\, |\ubar \alpha|
\]
Since by assumption $\|\La^{\bar  \om}\|< \|(J^{\bar \om}) ^{-1}\|^{-1}$ for $|\bar   \om|=1$ we get $\ubar  \alpha=0$, a contradiction.

We have thus verified that the $m+1$ vectors 
$-t^{-1} \ubar \alpha$, $\ubar  v^{(1)}$, ..., $\ubar v^{(m)} $ are linearly independent
and hence the rank of the matrix \eqref{curv submatrix} is at least $m$. 
This proves \eqref{eq:rank-curv} and finishes the proof of Proposition
\ref{prop:Q5spacetimeTk}.

\section{ Proof of Theorem  \ref{thm:sphmeans}}
\label{sec:proofoftheoremaverage}
Let $\sigma (x,t,y',\theta_{2n}, \bar \theta)=  \theta_{2n}+(\ubar x^\intercal J^{\bar \theta} - t\La^{\bar \theta} )e_{2n} $ 
which is comparable to the `rotational curvature' of the fixed time operator. 
We use the oscillatory representation of the kernel in \eqref{eq:CalAk} and split $\cA^{k}_{t} =\sum_{\ell=0}^{[k/3] -1 }\cA^{k,\ell}_{t} +\widetilde \cA^{k, [k/3]}_t $ where 
\Be\label{eq:CalAkell}  \mathcal{A}^{k,\ell}_{t} f(x) = 2^{k(m+1)} \int_{\bbR^d}\int_{\R^{m+1}} e^{i2^k \Psi(x,t,y',\theta_{2n},\bar \theta)} b_\ell(x,t,y,\theta) \, d\theta_{2n}\,d\bar \theta\,f(y)\,dy, \Ee
and $b_\ell$ is supported where $|\theta_{2n} |\approx  2^{ -\ell} $ when $\ell\le [k/3] -1$ and supported where $|\theta_{2n}|\lc 2^{-k/3}$ if $\ell=[k/3]$.

The operators $\cA^{k,\ell}_t$ are bounded on $L^1$ and $L^\infty$ uniformly in $k$ and $\ell$. 
A trivial kernel estimate yields 
\Be \label{eq:L1Linfty}\|\cA^{k,\ell}_t f \|_\infty  \lc 2^{k(m+1)} 2^{-\ell}  
\|f\|_1 
\Ee 
We also have the $L^2$ estimates 
\Be \label{eq:L2L2}\|\cA^{k,\ell}_t f \|_2 \lc 2^{-k\frac{d-m-1}{2}} 2^{\ell/2} \|f\|_2
\Ee 
for $\ell\le[k/3].$  These follow, after an application of Plancherel's theorem, from corresponding bounds for the oscillatory integral operators with phase function $\Phi$ as in \eqref{eq:Phi-phase} 
\[
 T_{k,\ell} f(x,t) = 2^{k(m+1)} \int_{\bbR^d} e^{i2^k \Phi(x,t,y)} b_\ell(x,t,y) f(y)\, \,dy,\]
 namely 
 \Be \label{eq:fold-oscint} \|T_{k,\ell} f(\cdot,t) \|_2 \lc 2^{\ell/2} 2^{-kd/2} \|f\|_2.\Ee
The estimate \eqref{eq:fold-oscint} follows from bounds in  \cite{Cuccagna1997} (cf. Remark \ref{kernel-cokernel-rem}). 
Interpolation of the trivial $L^1$ estimate and \eqref{eq:L2L2} and summing in $\ell$ yields an $L^p\to L^p$ estimate $\|\cA^k_t\|_{L^p\to L^p} =O(2^{-\epsilon(p) k})$ with $\eps(p)>0$ for $1<p<\infty$. 

We may interpolate between \eqref{eq:L1Linfty} and \eqref{eq:L2L2} and obtain
\Be \label{eq:interpol-allp}
\|\cA^{k,\ell}_t f \|_q \lc  2^{k(m+1-\frac{d+m+1}{q})} 2^{\ell(\frac 3q-1)} \|f\|_{q'}  ,\quad 2\le q\le\infty
\Ee
which implies 
\Be 
\|\cA^{k}_tf\|_q \le C_q \begin{cases}
 2^{k(m+1-\frac{d+m+1}{q})} \|f\|_{q'}  ,&\quad 3< q\le\infty
 \\
 k 2^{k(m+1-\frac{d+m+1}{q})} \|f\|_{q'}  ,&\quad q=3
 \\
 2^{k(m+\frac 23-\frac{d+m}{q})} \|f\|_{q'}  ,&\quad 2\le q<3
 \end{cases}
\Ee 
For the case $q=3$, the Bourgain interpolation trick (as discussed in \S\ref{sec:interpolpf}) also yields
\Be \label{eq:q=3}
\|\cA^k_t f\|_{L^{3,\infty}} \lc 2^{k(m+1-\frac{d+m+1}{3})} \|f\|_{L^{\frac 32,1}}.
\Ee

In the case $m<2n-2$, we have $\frac{d+m+1}{m+1}>3$ and thus get a uniform estimate for $\cA^k_t$ when $q=\frac{d+m+1}{m+1}= \frac{2n+2m+1}{m+1}$. 
For $m=2n-2$, we have $\frac{d+m+1}{m+1}=3$ and obtain the restricted weak type $(q',q)$ estimate for $\cA^k_t$ uniformly in $k$. For  $m=2n-1$, we get  a uniform $L^{q'}\to L^q$-bound 
when $q=\frac{3(d+m)}{3m+2}=\frac{9m+3}{3m+2}$. 

To combine the $\cA^k_t$, we use standard applications of Littlewood-Paley theory, writing
$\cA^k_t = L_k\cA^k_t L_k +E_k$ where the $L_k $ satisfy Littlewood-Paley inequalities \[\Big\| \Big(\sum_{k\ge 0}|L_k f|^2\Big )^{1/2}\Big \|_r\lc \|f\|_r, \quad \Big\| \sum_{k\ge 0} L_k f_k\Big \|_r\lc \Big\|\Big(\sum_{k\ge 0} |L_k f_k|^2\Big)^{1/2}\Big  \|_r\]  for $1<r<\infty$ and the error term $E_k$ has $L^p\to L^q$ operator norm $O(2^{-k})$ for all $1\le p,q\le \infty$. Since $q'\le 2\le q$, a  standard application  of Littlewood-Paley inequalities in conjunction with Minkowski's inequalities allows us to deduce the endpoint estimate for $P_3$ when $m<2n-2$ and the $L^{q'}\to L^q$ bound for  $q=\frac{9m+3}{3m+2}$ for the case $m=2n-1.$ The inequalities for $(1/p,1/q)$ on the interior parts of the  edges $P_1P_3$ and $P_2P_3$  follow by interpolation.

When $q=3$ and $m=2n-2$, we still get uniform bounds for $\cA_t^k$ 
 on the interiors of  $P_1P_3$ and $P_2P_3$, 
and interpolating \eqref{eq:q=3} with $L^1\to L^1$ and $L^\infty\to L^\infty$ bounds gives us sharp $L^p\to L^q$ estimates for $\cA_t^k$  on these edges.  The above Littlewood-Paley trick still works for those $(p^{-1},q^{-1})$ on the open  edges which satisfy $p\le 2\le q$ and thus for those   $(p^{-1},q^{-1})$ we get the $L^p\to L^q$ boundedness for the averages. A further interpolation finishes the argument.

\begin{rem}
\label{rem:antidiag-max}
For the case $m+3<2n$ the  above estimates \eqref{eq:interpol-allp} also give a sharp result for the $L^{q'} \to L^q$ estimate for the full maximal operator in Theorem \ref{thm:main}, without imposing the condition \eqref{eq:smallness-Lambda} on $\Lambda$. By applying  \eqref{eq:Sobolev1D}  for $q$ in place of $p$ we get \Be \label{eq:interpol-allp-max}
\|\sup_{t\in [1,2]} |\cA^{k,\ell}_t f|  \|_q \lc  2^{k(m+1-\frac{d+m}{q})} 2^{\ell(\frac 3q-1)} \|f\|_{q'}  ,\quad 2\le q\le\infty,
\Ee which implies $\|\sup_{t\in [1,2]} |\cA^{k}_t f|  \|_q \lc  2^{k(m+1-\frac{d+m}{q})} \|f\|_{q'} $ for $q>3$ and hence the $L^{q_0',1}\to L^{q_0,\infty}$ bound for the maximal operator $M$ for $q_0= \frac{d+m}{m+1}=\frac{2n+m}{m+1}$, provided that $\frac{d+m}{m+1}>3$, i.e. $m+3<2n$. Moreover one obtains the $L^{q'}\to L^q$ bound for $M$ in the range  $2\le q<\frac{2(n+m)}{m+1}$ if  $m+3\le 2n$.
\end{rem}

Finally we consider $L^p\to L^q$ bounds for $p\ne q'$ in 
the case $m=2n-1$. To this end, we will now give a further estimate based on $L^2\to L^q$ estimates for oscillatory integral operators in \cite{GreenleafSeeger1994}.

\begin{prop}\label{prop:L2-Lq-av} For $1\le p\le 2$,  { $\tfrac 1q=\tfrac{d-1}{d}(1-\tfrac 1p), $}
\Be\label{eqL2Lq-av} 
\|\cA^k_t f\|_q \lc 2^{-k( d-1-\frac{d+m}{p})}\|f\|_p. \quad 
\Ee
\end{prop}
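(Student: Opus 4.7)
The bound \eqref{eqL2Lq-av} parametrizes the line segment in the $(1/p,1/q)$-plane joining $(1,0)$ to $(1/2,(d-1)/(2d))$. At $p=1$ it reduces to the trivial kernel bound $\|\cA_t^k\|_{L^1\to L^\infty}\lesssim 2^{k(m+1)}$ from \eqref{eqn:trivL1infestimate}. A short calculation shows that interpolating this trivial bound with the endpoint
\begin{equation*}
\|\cA^k_t f\|_{L^{2d/(d-1)}(\bbR^d)} \lesssim 2^{-k(d-m-2)/2}\|f\|_{L^2(\bbR^d)}
\end{equation*}
gives exactly the stated exponent $2^{-k(d-1-(d+m)/p)}$ with $1/q=\tfrac{d-1}{d}(1-1/p)$ on the whole interpolation segment. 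Hence it suffices to establish this endpoint $L^2\to L^{2d/(d-1)}$ bound.

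The endpoint is proved by reducing to an oscillatory integral estimate. Using Plancherel in the $(y_{2n},\bar y)$-variables as in the passage from \eqref{eq:CalAk} to the $L^2$ estimate \eqref{eqn:sptimeL2estimate}, we write $\cA^k f(x,t)=T_kF_k(x,t)$ with the oscillatory integral operator $T_k(\cdot,t)$ having phase $\Phi$ as in \eqref{eq:Phi-phase}, and $\|F_k\|_2\approx 2^{-k(m+1)/2}\|f\|_2$. The required estimate for $\cA_t^k$ follows from the fixed-time bound
\begin{equation*}
\|T_k(\cdot,t)f\|_{L^{2d/(d-1)}(\bbR^d)}\lesssim 2^{-k(d-1)/2}\|f\|_{L^2(\bbR^d)},
\end{equation*}
which is a fold-type $L^2\to L^q$ oscillatory integral estimate.

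To justify this bound we apply the $L^2\to L^q$ theory for Fourier integral operators with two-sided Whitney fold canonical relations developed in Greenleaf--Seeger \cite{GreenleafSeeger1994}. The relevant fold structure of the canonical relation generated by $\Phi(\cdot,t,\cdot)$ has already been worked out in Remark \ref{kernel-cokernel-rem}: the singular locus of the left projection (equivalently, the right projection) is the hypersurface $\{\sigma=0,\,x'=y'\}$, and the left and right kernel vector fields $V_L$, $V_R$ constructed there satisfy $V_L(\det \varPi\Xi_y)\neq 0$ and $V_R(\det\varPi\Xi_y)\neq 0$, giving the Whitney fold condition on both sides. After these geometric checks, Greenleaf--Seeger provides the above $L^2\to L^{2d/(d-1)}$ bound for $T_k(\cdot,t)$; combined with the Plancherel identity above and interpolation with the trivial $L^1\to L^\infty$ bound, we obtain \eqref{eqL2Lq-av} for all $1\le p\le 2$.

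\textbf{Main obstacle.} The delicate step is matching the precise hypotheses (fold order, amplitude class, and order of the FIO) required by \cite{GreenleafSeeger1994} to the operator $T_k(\cdot,t)$ as it arises from the Métivier-group structure, and verifying that the Whitney fold information of Remark \ref{kernel-cokernel-rem} is sufficient uniformly in the parameter $\bar y$ on the support of the symbol. A minor additional point is that, because the fold estimate degenerates near $\sigma=0$, one may need to decompose $\cA_t^k=\sum_\ell \cA_t^{k,\ell}$ as in \eqref{eq:CalAkell} and sum the fold estimates over $\ell$; since the fold bound carries a gain away from $\sigma=0$ while the trivial $L^1\to L^\infty$ bound carries a loss proportional to $2^{-\ell}$, the summation should converge and reproduce the stated exponent.
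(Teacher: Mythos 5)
Your reduction is exactly the paper's: interpolate the trivial $L^1\to L^\infty$ bound $2^{k(m+1)}$ with an $L^2\to L^{q_0}$ endpoint, $q_0=\tfrac{2d}{d-1}$, of norm $2^{-k(d-m-2)/2}$, and obtain that endpoint via Plancherel from the fixed-time oscillatory integral bound $\|T_k(\cdot,t)\|_{L^2\to L^{q_0}}\lesssim 2^{-kd/q_0}$. The arithmetic checks out. However, there is a genuine gap in how you justify that oscillatory integral bound. A two-sided Whitney fold — which is all that Remark \ref{kernel-cokernel-rem} provides — is \emph{not} sufficient for the lossless decay $2^{-kd/q_0}$ at the critical exponent $q_0=\tfrac{2d}{d-1}$; the fold condition alone only yields estimates with a loss (this is precisely why the fixed-time $L^2$ bound of \cite{Cuccagna1997} carries the factor $2^{k/6}$). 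The hypothesis of \cite[Thm.~2.2]{GreenleafSeeger1994} that must be verified, and which you do not identify, is a curvature condition on the fold: the $(d-1)$-dimensional conic variety $\Sigma^{\mathrm{fold}}_{x,t}=\{\nabla_x\Phi(x,t,y):\sigma(x,t,y)=0\}$ must have the maximal number $d-2$ of nonvanishing principal curvatures. Verifying this is the substantive content of the proof: one parametrizes the fold surface by solving $\sigma=0$ for $y_{2n}$, computes the normal $\nu=(\ubar\alpha,\bar\alpha)$, shows $\gamma=\ubar\alpha^\intercal J^{\bar y}e_{2n}\neq 0$ using the invertibility of $J^{\bar y}$, and checks that the curvature matrix $\begin{pmatrix} -t^{-1}\gamma I_{2n-1}&PM\\ M^\intercal P^\intercal&0\end{pmatrix}$ has rank $2n-1+(m-1)=d-2$. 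None of this appears in your argument, and without it the citation of Greenleaf--Seeger does not apply.

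Your fallback in the ``main obstacle'' paragraph does not repair this. Decomposing dyadically in $\sigma$ as in \eqref{eq:CalAkell} and using only the nondegenerate H\"ormander bound on each piece (gain $2^{\ell/2}$ in $L^2\to L^2$, gain $2^{-\ell}$ in the kernel size) produces, after interpolation to $L^2\to L^{q_0}$, an $\ell$-exponent of $\tfrac{d-3}{2d}>0$ for $d>3$, so the sum over $\ell\le k/3$ diverges and costs a positive power of $2^{k}$ rather than ``reproducing the stated exponent.'' The curvature of the fold cone is what allows one to avoid this summation loss, and it is the step your proposal is missing.
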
 
\begin{proof}
This follows by an interpolation between the trivial $L^1\to L^\infty$ estimate with operator norm $O(2^{k(m+1)} )$ and the $L^2\to L^{q_0}$ estimate with $q_0=\frac{2d}{d-1}$ and  operator norm $\lc 2^{-k (d/q_0-(m+1)/2)} = 2^{-k(d-m-2)/2}$. 
The $L^2\to L^{q_0} $ bound follows via  Plancherel's theorem  from the estimate
\Be\label{osc eqL2Lq-av} 
\|T_k f(\cdot,t)\|_{q_0}  \lc 2^{-k d/{q_0}} \|f\|_2, \quad q_0=\tfrac{2d}{d-1}. \Ee
This in turn  is a consequence of  \cite[Thm. 2.2]{GreenleafSeeger1994} once we show that
the  $d-1$ dimensional conic variety $\Sigma^{\mathrm{fold}}_{x,t}= \{\nabla_x\Phi(x,t,y): \sigma(x,t,y)=0\} $ is a $d-1$ dimensional cone with $d-2$ nonvanishing principal curvatures everywhere (with $d=2n+m$).

Let $\Xi$ be as in \eqref{eq:Xidef} and let $\varPi\Xi\in \bbR^d $ be the spatial component of $\Xi$ (omitting the last component from $\Xi$). 
Let \[y_{2n}=\fy_{2n}(\bar y):= (t\La^{\bar y}-x^\intercal J^{\bar y})e_{2n}\] denote the solution of the equation $\sigma(x,t,y)=0$. 
We define 
\begin{align*}\xi(x,t,y',\bar y)&=  \varPi\Xi(x,t,y',\fy_{2n}(\bar y), \bar y))\\&=
\begin{pmatrix}
PJ^{\bar y} P^\intercal y'- t g(\ttf)PJ^{\bar y}e_{2n}
\\
(t\La^{\bar y}-x^\intercal J^{\bar y}) e_{2n}+
e_{2n}^\intercal J^{\bar y} (P^\intercal y'-tg(\ttf) e_{2n})
\\
\bar y
\end{pmatrix}
\end{align*}
From \eqref{eq:rank-condition} we see that 
$\xi_{y_1}, \dots, \xi_{y_{2n-1} }, \xi_{y_{2n+1}},\dots,  \xi_{y_{2n+m}} $ are linearly independent,  
\detail{ or 
\[ \rank \begin{pmatrix} xi_{y_1} &\dots& \xi_{y_{2n-1} } & \xi_{y_{2n+1}} &\dots & \xi_{y_{2n+m}} 
\endpmatrix = 2n-1+m.
\]
}which establishes $\Sigma^{\mathrm{fold}}_{x,t}$ as a manifold of dimension $2n-1+m$.

\detail{We compute 
\[ \xi_{y_j}
=\begin{pmatrix} 
PJ^{\bar y}  e_j+\partial_j g(\ttf) PJ^{\bar y} e_{2n}
\\
e_{2n}^\intercal J^{\bar y}  e_j
\\
0
\end{pmatrix} ,\]
\[
\xi_{\bar y_i}=\begin{pmatrix} 
PJ_iP^\intercal y'-tg(\ttf)PJ_ie_{2n}
\\
(t\La_i-x^\intercal J_i)e_{2n}
+ e_{2n}^\intercal J_i (P^\intercal y'-tg(\ttf) e_{2n}
\\
e_i^m
\end{pmatrix}\,.
\]}
We compute for $j,k\in\{1,\dots,2n\}$ and $i,l\in \{1,\dots,m\}$, \[
\quad \xi_{y_jy_k} = \begin{pmatrix} -t^{-1} \partial_{jk} g(\ttf) P J^{\bar y}e_{2n}\\0\\ 0\end{pmatrix}, \quad \xi_{y_j\bar y_i}=\begin{pmatrix}
PJ_i e_j+ \partial_jg(\ttf)  PJ_ie_{2n}
\\
e_{2n}^\intercal J_i ( e_j +\partial_jg (\ttf) e_{2n})\\0\end{pmatrix}
\]
and $\xi_{\bar y_i\bar y_l}=0$.

Define the normal vector $\nu$ for $x'=y'$ by  $\nu^\intercal =(\ubar \alpha^\intercal ,\bar \alpha^\intercal) $, and let $\alpha'=P\ubar \alpha$; so that
$\nu^\intercal \xi_{y_j}|_{x'=y'}=0$ for $j=1,\dots 2n-1$ and hence 
$\ubar \alpha^\intercal  J^{\bar y}  e_j=0$. Since $J^{\bar y}$ is invertible this implies that either $\ubar \alpha^\intercal  J^{\bar y}  e_{2n}\neq 0$  or $\ubar \alpha=0$. But the latter possibility would also  imply $\bar\alpha=0$ from the conditions $\inn{\nu}{\xi_{\bar y_i}}=0$.  Hence we have $\gamma:= \ubar \alpha^\intercal  J^{\bar y} e_{2n}\neq 0$. 

Let $\cC$ denote the $(2n-1+m)\times (2n-1+m)$ curvature matrix with respect to the normal $\nu$, with entries
$\inn{\nu} {\xi_{y_jy_k} }$ where $j,k\in \{1,\dots, 2n+m\}\setminus\{2n\} $.
When $x'=y'$ it  is given by
\begin{align*}\cC|_{x'=y'}=\inn\nu{\xi_{yy}''} 
 &=\begin{pmatrix} -t^{-1} \ga   I_{2n-1} &PM\\ M^\intercal P^\intercal &0\end{pmatrix} \quad\text{ with } \gamma = \ubar \alpha^\intercal  J^{\bar y} e_{2n} \neq 0,
\end{align*}
where 
$M$ is the $2n\times m$ matrix with $m$ columns $\sum_{j=1}^{2n} (\alpha^\intercal J_ie_j)e_j=-J_i\ubar\alpha $  and hence  
$PM$ is the $(2n-1)\times m$ matrix with columns $-PJ_i \ubar \alpha$, $i=1,\dots,m$.
Using a  lower dimensional version of the factorization \eqref{eq:factorization}
we see that the rank of 
$\cC$ at $x'=y'$ is equal to  the rank of 
\[
\begin{pmatrix} \ga I_{2n-1} &PM\\ 0_{m,2n-1}& -\gamma ^{-1} M^\intercal P^\intercal P M  
\end{pmatrix}, \quad \gamma = \ubar \alpha^\intercal J^{\bar y} e_{2n}\]
that is, $\rank\,\cC|_{x'=y'}=2n-1+ m-1=d-2$. 
\end{proof}

\begin{proof}[Conclusion of the proof of Theorem \ref{thm:sphmeans}] It remains to finish the argument for the 
`off-diagonal' estimates in part (iii) of this theorem.  Note that an $L^p\to L^q$ estimate implies an $L^{q'}\to L^{p'}$ estimate since the dual operator is similar  with $J$ replaced by $-J$.  

Let 
$p_1=\frac{d+m}{d-1}$, and $q_1=\frac{d(d+m)}{(d-1)(m+1)}$. 
Since $d=2n+m\ge m+2$, we have $p_1\le 2$ and $q_1\ge \frac{2d}{d-1}.$ Proposition \ref{prop:L2-Lq-av} yields the  $L^{p_1}\to L^{q_1}$ boundedness of the operators $\cA^k_t$ with norm uniform in $k$. The Littlewood-Paley arguments above also allow us to deduce the $L^{p_1}\to L^{q_1}$ boundedness of $\cA_t$, since $p_1\le 2\le q_1$. For  $m=2n-1$, we have $d=2n+m=2m+1$, and in this case,
$(\tfrac{1}{p_1},\tfrac 1{q_1} )= (\tfrac {2m}{3m+1}, \tfrac{2m^2+2m}{6m^2+5m+1})$
and 
$(1- \tfrac{1}{q_1}, 1-\tfrac 1{p_1})=  (\tfrac{4m^2+3m+1}{6m^2+5m+1}, \tfrac{m+1}{3m+1})$.
\end{proof} 

\section{Necessary Conditions for maximal operators} \label{sec:sharpness}

We provide five counter-examples, corresponding to each edge of the quadrilateral $\mathcal{R}$ for the Heisenberg group $\H^n$ (in particular, $m=1$), and one for the point $Q_2$. These show the necessity of all  the conditions 
in Theorem \ref{thm:maximal} and of some of the conditions in Theorem \ref{thm:main}.
The first four are suitable modifications of those in \cite{SchlagSogge1997} for the Euclidean case, which were in turn adapted from standard examples for spherical means and maximal functions.  These examples will be presented for all M\'etivier groups.  {The fifth example seems to be   new; it replaces the Knapp type example in the Euclidean case.}

\subsection{The line connecting \texorpdfstring{$Q_1$ and $Q_2$}{Q1 and Q2}} This is the necessary condition $p\leq q$ imposed by translation invariance and noncompactness of the group $G$ (see \cite{hormander1960}  for the analogous argument in the Euclidean case).

\subsection{\texorpdfstring{The line connecting $Q_2$ and $Q_3$}{The line connecting Q2 and Q3}} \label{sec:Q2Q3}
Let $B_{\delta}$ be the ball of radius $\delta$ centered at the origin. Let $f_{\delta}$ be the characteristic function of $B_{10\delta}$. Then \[\|f_{\delta}\|_p\approx \delta^{(2n+m)/p}.\] Let $C_\circ:=10(1+\|\La\|+\max_i\|J_i\|)$. For $1\le t\le 2$ we consider the sets
\[ R_{\delta,t}:= \{(\ubar{x}, \bar x) : ||\ubar x|-t|\le \delta/C_\circ,\,|\bar x-t\La\ubar x|\le \delta/C_\circ\}.\]
Then $|R_{\delta,t} |\gc \delta^{m+1} $.
Let $\Sigma_{x,t}= \{\om \in S^{2n-1}: |\ubar x-t \om|\le \delta/4\} $ which has spherical measure $\approx \delta^{2n-1}$.

If $x\in R_{\delta,t}$ and $\om \in \Sigma_{x,t}$ then  $|\ubar x-t\om |\le \delta$ and 
\[|\bar x- t \ubar x^\intercal J  \om-t^2\La\om| \le 
|\bar x-t \La\ubar x |+|\ubar  x^\intercal J(t \om - \ubar x)| + t|\La(t \om- \ubar x)|  \le 3\delta\]
(here we have used the skew symmetry of the $J_i$). We get 
\[f_{\delta}*\sigma_t(\ubar{x},\bar{x})=\int_{S^{2n-1}} f_{\delta}(\ubar{x}-t{\om},\bar{x}-t\ubar{x}^\intercal\! J{\om}-t^2\La \om)\,d\sigma({\om})\gtrsim \delta^{2n-1}\]
 for  $x\in R_{\delta,t}$. 
Passing to the maximal operator we consider $|\ubar x|\in [1,2]$ and put $t(x)=|\ubar x|$. Then setting
\[R_\delta= \big\{x: 1\le |\ubar x|\le 2,\, \big|\bar x- |\ubar x|\Lambda \ubar x\big | \le \delta/C_\circ\big\} \] 
we have $| R_\delta|\gc\delta^{m} $ and  $|f_\delta*\sigma_{t(x)}(x)|\ge \delta^{2n-1} $ for $x\in R_{\delta}$.

This yields the inequality
\[\delta^{2n-1}\delta^{m/q}\lesssim \delta^{(2n+m)/p},\]
and consequently, the necessary condition
\begin{equation}
\label{ball ineq}
\frac{m}{q}+2n-1\geq \frac{2n+m}{p},  
\end{equation}
that is, $(1/p,1/q)$ lies on or above the line connecting $Q_2$ and $Q_3$.

\subsection{\texorpdfstring{The point $Q_2$}{The point Q2}} 
For $p=p_2:=\frac{2n}{2n-1}=\frac{d-m}{d-m-1}$ the $L^p\to L^p$ bound fails. Here one uses a modification of Stein's example \cite{SteinPNAS1976} for the Euclidean spherical maximal function.
One considers the function $f_\alpha$ defined by $f_\alpha(\ubar v,v_{2n+1})= |\ubar v|^{-\frac{2n}{p_2}} |\log |v||^{-\alpha}  $ for $|\ubar v|\le 1/2$, $|v_{2n+1} |\le 1$ which belongs to $L^{p_2}$ for $\alpha>1/p_2$. One finds that if $t(\ubar x)=|\ubar x|$ then for 
$\alpha<1$ the integrals $f*\sigma_{t(x)}(x)$ are $\infty$ on a set of positive measure. If one  choose $\alpha$ close to $1$ this also shows that $M$ does not map any of the Lorentz spaces $L^{p_2,q}$ for $q<\infty$ to $L^{p_2,\infty}$.
\subsection{\texorpdfstring{The line connecting $Q_1$ and $Q_4$}{The line connecting Q1 and Q4}} \label{sec:Q1Q4} For this line we just use the counterexample for the individual averaging operators, bounding the maximal function from below by an averaging operator. 
Given $t\in [1,2]$, let $g_{\delta,t}$ be the characteristic function of the set
 $\{(\ubar{y},\bar{y}): | |\ubar{y}|-t|\le C_0\delta, |\bar{y}-t\La\ubar y|\le C_0\delta\}$ with $C_0=10 \sum_{i=1}^m \|J_i\|$. Thus  $\|g_{\delta,t}\|_p\lesssim \delta^{(m+1)/p}. $ 

Let  $x=(\ubar{x},\bar{x})$ be such that  $|\ubar x|\le  \delta$ and $|\bar x-t\La \ubar x| \le  \delta$. For any $\om\in S^{2n-1}$, we have that
$t|\ubar{x}^\intercal J{\om}|\lesssim 2\delta$. Thus 
\begin{gather*} \big||\ubar{x}-t\om|-t\big|\le 2  \delta\\
\big|\bar x-t \ubar x^\intercal J\om-t^2\La \om- t\La (\ubar x-t\om) \big| \le |\bar x-t\La\ubar x|+t|x^\intercal J\om| \le C_0 \delta \end{gather*}
implying that $|g_{\delta,t}*\sigma_t(x)|\gtrsim 1$. 
This yields the inequality $\delta^{(2n+m)/q}\le \delta^{(m+1)/p} $ which leads to the necessary condition
\begin{equation}
\label{scaling ineq}
\frac {1}{q} \ge \frac{m+1}{2n+m} \frac 1p,   
\end{equation} that is, $(1/p,1/q)$ lies on or above the line connecting $Q_1$ and $Q_4$.

\subsection{\texorpdfstring{The line connecting $Q_3$ and $Q_4$,  $m=1$}{The line connecting Q3 and Q4, m=1}} \label{sec:Q3Q4}
We now consider the case $m=1$; after a change of variables we may assume that the skew symmetric matrix $J$ satisfies the Heisenberg condition $J^2=-I$.  Pick a unit vector $u\in \bbR^{2n}$  so that $\La^\intercal \in \bbR u$, and set $v= Ju/\|Ju\|$ (thus $\inn{u}{v}=0$). Let $V=\mathrm{span}\{u,v\}$  and  let $V^\perp$ denote the orthogonal complement of $V$ in $\bbR^{2n}$.  Finally, let $\pi, \pi_\perp$  be the orthogonal projection to $V, V^\perp$ respectively. Note that $J$ maps $V$  into itself, since $J^2=-I$,   and since $J$ is skew-symmetric it also maps $V^\perp$ into itself. 

For  sufficiently large $C_1$ (say,  $C_1=10(2+\|\La\|))$  and small $\delta \ll C_1^{-1}$ let \[Q_\delta=\{(\ubar y, y_d): |\pi_{\perp} (\ubar y)|\le C_1\delta^{1/2}, \,|\pi (\ubar y)|\le C_1\delta,\, |y_d|\le C_1\delta\}.
\] Let $f_\delta= \bbone_{Q_\delta}$, so that $\|f_\delta\|_p^p\lc \delta^{(2n-2)/2+3}$, i.e. $\|f_\delta\|_p\lc\delta^{(n+2)/p}$.

For $1\le t\le 2$, let 
\begin{multline*} R^t_\delta=\big\{ (\ux,x_d): |\pi_\perp( \ux)|\le \delta^{1/2},\,
\big||\pi( \ubar x)|-t\big|\le \delta,\, |x_d-t\La \ubar x|\le \delta,  \\
1/4 <\inn{\ubar x}{u}, \inn {\ubar x}{v}<3/4\big\}
\end{multline*} and let 
$R_\delta=\cup_{9/8\le t\le 15/8} R^t_\delta$.  Then $|R_\delta| \approx \delta^{(2n-2)/2 +2} \delta^{-1}=\delta^n$.

For $x\in R_\delta$, 
we  derive  a lower bound for  $\sigma_{t(x)}* f_\delta(x)$, setting $t(x):= |\pi(\ubar x)|$.
Let \[S_{x}=\{\om\in S^{2n-1}: |\pi_\perp(\om)|\le \delta^{1/2}, \big|\inn{\om}{u}- \frac{\inn{\ubar x}{u}}{|\pi(\ubar x)|}\big|\le \delta , \, \inn{ \om}{v}>0\} ,\]
which has spherical measure $\gc \delta^{(2n-2)/2 +1 }=\delta^n $.

For $x\in R_\delta$, $\om\in S_x$, $t(x)= |\pi(\ubar x)|$ we have 
\Be\label{eq:piperpest} |\pi_\perp(\ubar x-t(x)\om)|\le 3\delta^{1/2}\Ee and \[ |\inn{\ubar x-t(x)\om}{u} | \le \delta. \] 
Since $\om \in S^{2n-1}$,  $\inn{\om}{v}>0$ and 
$\inn{\ubar x}{u}, \inn{\ubar x}{v} \in [1/4, 3/4] $, 
we also get
\begin{multline*}
    \big|\inn{\ubar x-t(x)\om}{v} \big |\le 2\Big |\frac{\inn{\ubar x}{v}}{|\pi(\ubar x)|}-\inn{\om}{v} \Big |\le 8\Big| \frac{\inn{\ubar x}{v} ^2}{|\pi(\ubar x)|^2} - \inn{\om}{v}^2\Big| \\= 8\Big|1-  \frac{\inn{\ubar x}{u} ^2}{|\pi(\ubar x)|^2} -1+ \inn{\om}{u}^2+|\pi_\perp(\om)|^2\Big|\le 8
    \big|\inn{\om}{u}- \frac{\inn{\ubar x}{u}}{|\pi(\ubar x)|}\big|+8\delta^2 \le 9\delta.
\end{multline*}
Hence
\Be\label{eq:piest} |\pi(x-t(x)\omega) | \le 10 \delta,  \quad \om \in S_x.\Ee
Now, since $J$ acts on $V$ and $V^\perp$,
\[ x^\intercal J\omega= (x-t(x) \omega)^\intercal J\omega 
= 
(\pi(x-t(x) \omega))^\intercal J\omega + (\pi_\perp (x-t(x)\omega))^\intercal J (\pi_ \perp \omega) 
\] and thus from \eqref{eq:piperpest} and \eqref{eq:piest}.
\[ |x^\intercal J\om| \le \delta+ 3\delta^{1/2} \delta^{1/2}  =4\delta,  \text{   $\omega\in S_x$. }
\]

From this we finally we obtain, writing $x_d-t^2\La\om= x_d-t\La(\ubar x)+t\La(\ubar x-t\om)$ and using that $\La^\intercal\in V$, 
\[ |x_d-t(x)\ux^\intercal J\om -t(x)^2\La \om| \le |x_d-t(x) \La\ubar x|+t(x)\|\La\|
|\pi(\ubar x-t(x)\om)| +4\delta \le C \delta.\]

These inequalities imply  
\begin{align*} M f_\delta(x) \ge f_\delta*\sigma_{t(x)} (x) &=\int_{S_x} f_\delta (\ubar x-t(x)\om, x_d-t(x) \ubar x^\intercal J \om -t(x)^2 \La\om) d\sigma(\om) \\& \ge |S_x|\gc \delta^n \text{ for $x\in R_\delta$}.
\end{align*} Hence we get
\[\|Mf_\delta\|_q/\|f_\delta\|_p \gc |R_\delta|^{1/q} \delta^n \delta^{-(n+2)/p} \gc \delta^{n/q+n-(n+2)/p} 
\] and letting $\delta\to 0$, we obtain  the necessary condition
\begin{equation}
\label{knapp ineq}
\frac{n}{q}+n\geq \frac{n+2}{p},
\end{equation}
that is, the necessary condition for $m=1$ is that   $(1/p,1/q)$ lies on or above the line connecting $Q_3$ and $Q_4$.

\section{Necessary conditions for averaging operators} \label{sec:sharpness-II}

We now prove the necessity of the conditions in Corollary \ref{cor:sphmeans}, and  of some of the conditions in Theorem \ref{thm:sphmeans}.

\subsection{\texorpdfstring{Necessary condition  for $n\ge 2$ and $m\le 2n-2$}{Necessary condition in higher dimensions}} For $n\ge 2$ the sharpness of Theorem \ref{thm:sphmeans} follows from the considerations in \S \ref{sec:sharpness}.
Concerning the line $Q_2Q_3$ we use the example in \S\ref{sec:Q2Q3} to get 
 \[\|f_\delta*\sigma_t\|_q \ge \delta^{2n-1+ (m+1)/q-(2n+m)/p} \|f_\delta\|_p\]
which gives the  necessary condition $ \frac{2n+m}{p}- \frac{m+1}q \le 2n-1.$
The calculation in \S\ref{sec:Q1Q4} only involves the averaging operator and yields 
the necessary condition $\frac 1q\ge \frac{m+1}{2n+m} \frac 1p$. 

\subsection{Sharpness for \texorpdfstring{$n=1$}{n=1}} 
\label{sec:sharpness-n=1} Here we can assume by a change of variables that $\La=0$ and that $x^\intercal Jy=x_2y_1-x_1y_2$.  We now  consider the circular means on $G=\bbH^1$ given by 
 \[Af(x)=\int f(x_1-\cos s, x_2-\sin s, x_3-x_2\cos s+x_1\sin s)\, ds.\]
 We need  to prove  the necessary condition \Be \label{necH1}
 6(1/p-1/q)\le 1,\Ee  i.e. $(1/p,1/q)$ cannot lie below the line connecting the points $(1/2,1/3)$ and $(2/3,1/2)$.
 This is 
 in analogy with the situation for integrals along the moment curve $(s,s^2,s^3)$ in the Euclidean situation of  $\bbR^3$; there the operator is tested   on indicator  functions of $(\delta, \delta^2,\delta^3)$-boxes. We show how to modify that example in  our situation.
 
 Let $f_\delta$ be the indicator function of the parallelepiped \[P_\delta=\{(y: |y_1|\le (2\delta)^2,\,\, |y_2|\le 2\delta,\,\, |y_3+y_2|\le (2\delta)^3 \}
 \]
  and
 \[ V_\delta=\{x: |x_1-1|\le \delta^2,\,\, |x_2|\le \delta, |x_3| \le \delta^3\}. \]
For $|s|\le \delta$, and $x\in V_\delta$  we have
\begin{align*}
&|x_1-\cos s| \le |x_1-1|+ \frac{s^2}{2}+ s^3\le 3\delta^2
\\
&|x_2-\sin s|\le |x_2|+|\sin s| \le 2\delta
\end{align*} and 
\begin{multline*}
|x_3-x_2\cos s+x_1\sin s)+ (x_2-\sin s)|\\ \le 
|x_3|+ |x_2||1-\cos s|+|\sin s||x_1-1|\le 3\delta^3
\end{multline*}
Thus if $y= 
 (x_1-\cos s, x_2-\sin s, x_3-x_2\cos s+x_1\sin s)$ for  $0\le s\le \delta$ and $x\in V_\delta$, then $y\in P_\delta$ and thus $Af_\delta(x)\ge \delta$. Hence
 \[\|Af_\delta\|_q\ge 
  |V_\delta|^{1/q} \delta= \delta^{1+6/q}\]
 and since
 $\|f_\delta\|_p=|P_\delta|^{1/p} \lc \delta^{6/p}$, we obtain the necessary condition \eqref{necH1}.
 
 \section{Implications for sparse bounds}\label{sec:sparse}
 As mentioned in the introduction one  principal goal  of  \cite{BagchiHaitRoncalThangavelu}  was to derive   for the {global} maximal operator $\fM$ inequalities of the form 
\Be\label{sparse-bound}  \int_{\bbH^n} \fM f(x) w(x) dx \le C\sup\big\{  \La_{\cS, p_1,p_2} (f,w) :\,{\cS\, \mathrm{sparse}} \big\},
\Ee
where the supremum is taken over {\it sparse families } of nonisotropic Heisenberg cubes 
(see \cite{BagchiHaitRoncalThangavelu} for precise definitions and constructions) and  the sparse forms $\La_{\fS,p_1,p_2} $  are 
given by 
\[\La_{\cS,p_1,p_2} (f,w) = \sum_{S\in \cS} |S| \Big(\frac{1}{|S|} \int|f|^{p_1}\Big)^{1/p_1} \Big(\frac{1}{|S|} \int_S |f|^{p_2} \Big)^{1/p_2}.\]
Relying entirely on arguments in \cite{BagchiHaitRoncalThangavelu} and using our  $L^p\to L^q$ bounds  we can show 
that  the sparse bound  \eqref{sparse-bound} 
holds  if  $(1/p_1, 1-1/p_2)$ lies in the interior of the quadrilateral $Q_1Q_2Q_3Q_4$ in 
\eqref{quadrilateral} (or on the open line segment $Q_1Q_2$), a result which is sharp up to the boundary. 

For the proof of sparse bounds for the global maximal operator the relevance of $L^p\to L^q$ results of localized maximal functions was recognized by Lacey \cite{laceyJdA19} in his work on the Euclidean spherical maximal function.  
Here we mention that the recent paper \cite{BeltranRoosSeeger} gives  very general results about this correspondence for the  Euclidean geometry;   Theorem 1.4 of that paper  is of particular relevance here  (see  also \cite{conde-alonso-etal2} for some results in spaces of homogeneous type). Moreover we refer to  \cite{BeltranRoosSeeger} for  general results about necessary conditions. 

For the proof of \eqref{sparse-bound} we  use the argument in \cite{BagchiHaitRoncalThangavelu}. One needs  to supplement the $L^p\to L^q$ bounds for the local maximal operator by a mild regularity result, namely
\Be\label{eq:eps-regularity result} \sup_{|h|\le 1}|h|^{-\eps} \big\|\sup_{t\in [1,2] } |(\tau_h f-f) *\ci J\mu_t| \big\|_q \lc \|f\|_p
\Ee for  some $\eps>0$; here $\tau_h$ 
\detail{$\tau_h f(y)=f(y \cdot h^{-1})$}
is the right translation operator, i.e. $\tau_h f(y)= f(\ubar y-\ubar h, \bar y-\bar h-\ubar y^\intercal J\ubar h)$.  One also needs to verify a dual condition which in our case is implied by \eqref{eq:eps-regularity result} and the symmetry of the sphere.
If $(1/p,1/q)$ belongs to  the interior of the boundedness region in Theorem \ref{thm:main} then our approach   yields   \eqref{eq:eps-regularity result} with an $\eps(p,q)>0$.  To 
prove this  one needs to show, by the localization argument in the beginning of \S\ref{sec:main-results} and the subsequent dyadic  decomposition,
that the operator $\cA^k$ in \eqref{eq:CalAk}  satisfies
\Be\label{eq:regularity=spt}\|\cA^k (\tau_h f-f) \|_{L^q(\bbR^d\times[1,2])} \lc 2^{-k(\frac 1q+a(p,q))} (2^{k}|h|)^{\eps} \|f\|_p
\Ee
for $|h|\ll 1$  and 
 functions $f$ supported near the origin,
 with $a(p,q)>0$ in the interior of the boundedness region. By taking means it suffices to   prove this for $\eps=0$ and $\eps=1$. The case for $\eps=0$ is immediate from the already proven results. For the case $\eps=1$ we use a change of  variables, followed by the  fundamental theorem of calculus, and a  change of variable again, with the fact that $(\tau_{s\underline h} \ubar y)^\intercal J_i \ubar h
 = \ubar y^\intercal J_i \ubar h$ 
 to write 
 \begin{multline*} 2^{-k(m+1)} \cA^k [\tau_h f-f](x,t)\\= 
 \int_0^1 \int f(\tau_{sh}y) \int e^{i 2^k\Psi(x,t,y,\theta) } (2^k \beta_1+\beta_2)\Big|_{(h, x,t,y,\theta)}  d\theta\, dy\, ds\end{multline*}
 with 
\begin{align*}
      \beta_1(h,x,t,y,\theta) &= ib(x,t,y',\theta) \big[ \ubar h^\intercal \nabla_{\underline{y}} \Psi + 
 \bar h^\intercal \nabla_{\bar y}\Psi + \uy^\intercal J^{\nabla_{\bar y}\Psi } \ubar h  \big]_{(x,t,y,\theta)},\\
 \beta_2(h,x,t,y,\theta) &= (h')^\intercal \nabla_{y'} b|_{(x,t,y',\theta)}.
\end{align*}

   \detail{ {\bf DETAIL:} 
     \begin{multline*} 2^{-k(m+1)} \cA^k [\tau_h f-f](x,t)\\= 
 \int f(\ubar y-\ubar h, \bar y -\bar h-\ubar y^\intercal J\ubar h) \int e^{i 2^k\Psi(x,t,y,\theta) } b(x,t, y'+h', \theta) d\theta dy
 \\-  \int f(\ubar y), \bar y) \int e^{i 2^k\Psi(x,t,y,\theta) } b(x,t, y', \theta) d\theta dy
 \\ =
\int  \int f(\ubar z, \bar z) e^{i 2^k\Psi(x,t,\ubar z+\ubar h,\bar z+\bar h+\ubar z J\ubar h, \theta) } b(x,t, y'+h', \theta)  dy d\theta 
 \\- \int\int f(\ubar z,\bar z)  e^{i 2^k\Psi(x,t,z,\theta) } b(x,t, z', \theta) dz d\theta 
 \end{multline*}
 using the  shear transformation 
 $ (\ubar z, \bar z)= (\ubar y-\ubar h, \bar y -\bar h-\ubar y^\intercal J\ubar h)$ 
 which gives 
 \[(\ubar y, \bar y) =(\ubar z+\ubar h, \bar z+\bar h+(\ubar z +\ubar h)^\intercal J\ubar h)
 =(\ubar z+\ubar h, \bar z+\bar h+\ubar z^\intercal  J\ubar h)
 \]
 Fix $(x,t),\theta $ and let \[G(\ubar z,\bar z)= G(x,t,\theta, \ubar z,\bar z)=e^{i 2^k\Psi(x,t,z,\theta) } b(x,t, z', \theta)\] and we have by the above,
 setting $H(\ubar z)= (\ubar H, \bar H(z))= (\ubar h, \bar h+\ubar z^\intercal  J\ubar h)$
 \begin{multline*}
      2^{-k(m+1)} \cA^k [\tau_h f-f](x,t)\\= \int\int 
      f(\ubar z,\bar z) [G(x,t,\theta, z+ H(z)) - G(x,t,\theta, z)] dz d\theta\\
      =\int\int 
      f(\ubar z,\bar z) \int_0^1 \inn{H(z)}{\nabla G(x,t,\theta, ..) }\big|_{(...)= z+sH(z)} ds dz d\theta
     \end{multline*}
 and 
 \[ \nabla_z G(x,t,\theta, z)=  e^{i2^k\Psi(x,t,z,\theta) } (2^k  i\nabla_z \Psi(x,t,z,\theta) b(x,t,z',\theta) + \nabla_z b(x,t,z',\theta) )
 \]
 So we see that
 \begin{multline*}
 2^{-k(m+1)} \cA^k [\tau_h f-f](x,t)= \int_0^1\int \int f(\ubar z, \bar z) 
 e^{i2^k\Psi(x,t,z+sH(z) ,\theta) } \times\\
 \big (2^k i  \inn {H(z)} {\nabla_z\Psi}\big|_{(x,t,z+sH(z))}  + \inn {h'}{\nabla_{z'} b}_{(x,t,z'+sh)  } \big)dz d\theta ds
  \end{multline*}

 For fixed $\theta,s $  we change variables 
 we set $y=z+sH(z)$ with $\det(\frac{Dy}{Dz}=1$ and yields
 \[(\ubar z, \bar z)=(\ubar y-sh, \bar y-s\bar h-(\ubar y-s\ubar h)^\intercal J (sh) )
 =
 (\ubar y-s\ubar h, \bar y-s\bar h-\ubar y^\intercal J (sh) )
 \] 
 and $H(z) =s^{-1} (z-y)= (\ubar h, \bar h +\ubar y^\intercal Jh)$. 
 
 Hence
 \begin{multline*}
 2^{-k(m+1)} \cA^k [\tau_h f-f](x,t)= \int_0^1\int \int f(\ubar y-\ubar sh, \bar y-s\bar h-\ubar y^\intercal J(sh) ) 
 e^{i2^k\Psi(x,t,y,\theta) } \times\\
 \Big( 
i2^k b(x,t,y',\theta) \big(\inn{\ubar h} {\nabla_{\underline y} \Psi (x,t,y,\theta) }
+\inn{\bar h} {\nabla_{\bar y} \Psi (x,t,y,\theta)} \\
+ \sum_{i=1}^m \inn{\ubar y^\intercal J_i h} {\partial_{\bar y_i} \Psi(x,t,y,\theta)} \big)
 + \inn {h'}{\nabla_{y'} b(x,t,y',\theta) }\Big)
 dy d\theta ds
  \end{multline*} 
 }

 Thus, taking into account the explicit form of the phase function \eqref{phasedefn}, 
 one can reduce the case for $\eps=1$ in \eqref{eq:regularity=spt} to estimates for operators of the form \eqref{eq:CalAk} already handled (note that here $\nabla_{\bar y} \Psi=-\theta$).

Finally, by similar arguments one gets  
 the regularity result for fixed $t$, 
\[ \|\cA_t^k (\tau_h f-f) \|_{L^q(\bbR^d)} \lc 2^{-kb(p,q)} (2^k|h|)^{\eps} \|f\|_p
\]
where $b(p,q)>0$ in the interior of the boundedness region in Corollary \ref{cor:sphmeans}. Again, using the reasoning in \cite{BagchiHaitRoncalThangavelu} this yields  
 an improved sparse bound for the lacunary maximal function, namely
\Be\label{sparse-bound-lac} \int_{\bbH^n} \sup_{k\in\Z}|f*\mu_{2^k} (x)|
w(x) dx \le C\sup
\big\{ \La_{\cS, p_1,p_2} (f,w):\, {\cS\, \mathrm{sparse}} \big\}\Ee
whenever $(1/p_1, 1-1/p_2)$ belongs to the interior of the boundedness region in Corollary \ref{cor:sphmeans}.

\begin{remarka}
The reader may wonder whether it is necessary to use the sparse bounds as in  \cite{BagchiHaitRoncalThangavelu} for the proof of  $L^p(\bbH^n)\to L^p(\bbH^n)$  bounds for the lacunary spherical maximal function, for  $1<p\le\infty$ and $n\ge 1$. We are grateful to both Luz Roncal and an anonymous referee for raising this question.  Indeed a more direct proof can be given; on can for example modify  the arguments in \cite{MuellerSeeger2004};  alternatively one can rely on a straightforward modification of the Calder\'on-Zygmund arguments in
\cite[\S6]{AndersonCladekPramanikSeeger}.
\end{remarka}

\bibliographystyle{amsplain}
%\bibliography{Reference}

\begin{thebibliography}{10}

\bibitem{AndersonCladekPramanikSeeger}
Theresa~C. Anderson, Laura Cladek, Malabika Pramanik, and Andreas Seeger,
  \emph{Spherical means on the {H}eisenberg group: Stability of a maximal
  estimate}, To appear in {J}. d'Analyse Math., arXiv:1801.06981, 2018.

\bibitem{AHRS}
Theresa~C. Anderson, Kevin Hughes, Joris Roos, and Andreas Seeger,
  \emph{${L}^p\to {L}^q$ bounds for spherical maximal operators}, Math. Z.
  \textbf{297} (2021), no.~3-4, 1057--1074.

\bibitem{BagchiHaitRoncalThangavelu}
Sayan Bagchi, Sourav Hait, Luz Roncal, and Sundaram Thangavelu, \emph{On the
  maximal function associated to the spherical means on the {H}eisenberg
  group}, {t}o appear in {N}ew York {J}. {M}ath. \, arXiv:1812.11926v2, 2020.

\bibitem{BeltranGuoHickmanSeeger}
David Beltran, Shaoming Guo, Jonathan Hickman, and Andreas Seeger, \emph{The
  circular maximal operator on {H}eisenberg radial functions}, {T}o appear in
  {A}nn. {S}cuola {N}orm. {P}isa, arXiv:1912:11718, 2019.

\bibitem{BeltranRoosSeeger}
David Beltran, Joris Roos, and Andreas Seeger, \emph{Multi-scale sparse
  domination}, arXiv:2009:00277, 2020.

\bibitem{bernicot-frey-petermichl}
Fr\'ed\'eric Bernicot, Dorothee Frey, and Stefanie Petermichl, \emph{Sharp
  weighted norm estimates beyond {C}alder\'on-{Z}ygmund theory}, Anal. PDE
  \textbf{9} (2016), no.~5, 1079--1113. \MR{3531367}

\bibitem{Bourgain-CompteRendu1986}
Jean Bourgain, \emph{Estimations de certaines fonctions maximales}, C. R. Acad.
  Sci. Paris S\'{e}r. I Math. \textbf{301} (1985), no.~10, 499--502.
  \MR{812567}

\bibitem{CarberySeegerWaingerWright1999}
Anthony Carbery, Andreas Seeger, Stephen Wainger, and James Wright,
  \emph{Classes of singular integral operators along variable lines}, J. Geom.
  Anal. \textbf{9} (1999), no.~4, 583--605. \MR{1757580}

\bibitem{conde-alonso-etal2}
Jos\'e~M. Conde-Alonso, Francesco Di~Plinio, Ioannis Parissis, and Manasa~M.
  Vempati, \emph{A metric approach to sparse domination}, arXiv:2009.00336,
  2020.

\bibitem{Cowling1980}
Michael~G. Cowling, \emph{On {L}ittlewood-{P}aley-{S}tein theory}, Rend. Circ.
  Mat. Palermo (2) (1981), no.~suppl. 1, 21--55. \MR{639463}

\bibitem{Cuccagna1997}
Scipio Cuccagna, \emph{{$L^2$} estimates for averaging operators along curves
  with two-sided {$k$}-fold singularities}, Duke Math. J. \textbf{89} (1997),
  no.~2, 203--216. \MR{1460620}

\bibitem{GangulyThangavelu}
Pritam Ganguly and Sundaram Thangavelu, \emph{On the lacunary spherical maximal
  function on the {H}eisenberg group}, J. Funct. Anal. \textbf{280} (2021),
  no.~3, 108832, 32pp. \MR{4170795}

\bibitem{GreenleafSeeger1994}
Allan Greenleaf and Andreas Seeger, \emph{Fourier integral operators with fold
  singularities}, J. Reine Angew. Math. \textbf{455} (1994), 35--56.
  \MR{1293873}

\bibitem{hormander1960}
Lars H\"{o}rmander, \emph{Estimates for translation invariant operators in
  {$L^{p}$} spaces}, Acta Math. \textbf{104} (1960), 93--140. \MR{121655}

\bibitem{Hormander1973}
\bysame, \emph{Oscillatory integrals and multipliers on {$FL^{p}$}}, Ark. Mat.
  \textbf{11} (1973), 1--11. \MR{340924}

\bibitem{hurwitz}
Adolf Hurwitz, \emph{\"{U}ber die {K}omposition der quadratischen {F}ormen},
  Math. Ann. \textbf{88} (1922), no.~1-2, 1--25. \MR{1512117}

\bibitem{Kaplan1980}
Aroldo Kaplan, \emph{Fundamental solutions for a class of hypoelliptic {PDE}
  generated by composition of quadratic forms}, Trans. Amer. Math. Soc.
  \textbf{258} (1980), no.~1, 147--153. \MR{554324}

\bibitem{KimJoonil}
Joonil Kim, \emph{Annulus maximal averages on variable hyperplanes},
  arXiv:1906.03797, 2019.

\bibitem{laceyJdA19}
Michael~T. Lacey, \emph{Sparse bounds for spherical maximal functions}, J.
  Anal. Math. \textbf{139} (2019), no.~2, 613--635. \MR{4041115}

\bibitem{Lee2003}
Sanghyuk Lee, \emph{Endpoint estimates for the circular maximal function},
  Proc. Amer. Math. Soc. \textbf{131} (2003), no.~5, 1433--1442. \MR{1949873}

\bibitem{Metivier1980}
Guy M\'{e}tivier, \emph{Hypoellipticit\'{e} analytique sur des groupes
  nilpotents de rang {$2$}}, Duke Math. J. \textbf{47} (1980), no.~1, 195--221.
  \MR{563376}

\bibitem{MSS93}
Gerd Mockenhaupt, Andreas Seeger, and Christopher~D. Sogge, \emph{Local
  smoothing of {F}ourier integral operators and {C}arleson-{S}j\"{o}lin
  estimates}, J. Amer. Math. Soc. \textbf{6} (1993), no.~1, 65--130.
  \MR{1168960}

\bibitem{MuellerSeeger2004}
Detlef M\"{u}ller and Andreas Seeger, \emph{Singular spherical maximal
  operators on a class of two step nilpotent {L}ie groups}, Israel J. Math.
  \textbf{141} (2004), 315--340. \MR{2063040}

\bibitem{NarayananThangavelu2004}
E.~K. Narayanan and Sundaram Thangavelu, \emph{An optimal theorem for the
  spherical maximal operator on the {H}eisenberg group}, Israel J. Math.
  \textbf{144} (2004), 211--219. \MR{2121541}

\bibitem{NevoThangavelu1997}
Amos Nevo and Sundaram Thangavelu, \emph{Pointwise ergodic theorems for radial
  averages on the {H}eisenberg group}, Adv. Math. \textbf{127} (1997), no.~2,
  307--334. \MR{1448717}

\bibitem{PhongStein11991}
Duong~H. Phong and Elias~M. Stein, \emph{Radon transforms and torsion},
  Internat. Math. Res. Notices (1991), no.~4, 49--60. \MR{1121165}

\bibitem{radon}
Johann Radon, \emph{Lineare {S}charen orthogonaler {M}atrizen}, Abh. Math. Sem.
  Univ. Hamburg \textbf{1} (1922), no.~1, 1--14. \MR{3069384}

\bibitem{RoosSeeger}
Joris Roos and Andreas Seeger, \emph{Spherical maximal functions and fractal
  dimensions of dilation sets}, to appear in Amer. J. Math.,  arXiv:2004.00984, 2020.

\bibitem{rss2}
Joris Roos, Andreas Seeger, and Rajula Srivastava, \emph{Spherical maximal
  functions on {H}eisenberg groups: Restricted dilation sets}, manuscript in
  preparation.

\bibitem{Schlag1997}
Wilhelm Schlag, \emph{A generalization of {B}ourgain's circular maximal
  theorem}, J. Amer. Math. Soc. \textbf{10} (1997), no.~1, 103--122.
  \MR{1388870}

\bibitem{SchlagSogge1997}
Wilhelm Schlag and Christopher~D. Sogge, \emph{Local smoothing estimates
  related to the circular maximal theorem}, Math. Res. Lett. \textbf{4} (1997),
  no.~1, 1--15. \MR{1432805}

\bibitem{SteinPNAS1976}
Elias~M. Stein, \emph{Maximal functions. {I}. {S}pherical means}, Proc. Nat.
  Acad. Sci. U.S.A. \textbf{73} (1976), no.~7, 2174--2175. \MR{420116}

\bibitem{SteinBeijing}
\bysame, \emph{Oscillatory integrals in {F}ourier analysis}, Beijing lectures
  in harmonic analysis ({B}eijing, 1984), Ann. of Math. Stud., vol. 112,
  Princeton Univ. Press, Princeton, NJ, 1986, pp.~307--355. \MR{864375}

\bibitem{Stein-harmonic}
\bysame, \emph{Harmonic analysis: real-variable methods, orthogonality, and
  oscillatory integrals}, Princeton Mathematical Series, vol.~43, Princeton
  University Press, Princeton, NJ, 1993, With the assistance of Timothy S.
  Murphy, Monographs in Harmonic Analysis, III. \MR{1232192}

\end{thebibliography}
\providecommand{\bysame}{\leavevmode\hbox to3em{\hrulefill}\thinspace}
\providecommand{\MR}{\relax\ifhmode\unskip\space\fi MR }
% \MRhref is called by the amsart/book/proc definition of \MR.
\providecommand{\MRhref}[2]{%
  \href{http://www.ams.org/mathscinet-getitem?mr=#1}{#2}
}
\providecommand{\href}[2]{#2}

\end{document}